\newtheorem{lemma}{Lemma}
\newtheorem{proposition}{Proposition}
\newtheorem{theorem}{Theorem}
\newtheorem{corollary}{Corollary}
\theoremstyle{definition}
\newcommand{\commentout}[1]{}
\newcommand{\ignore}[1]{}
\begin{document}

\thispagestyle{empty}

\centerline{\Large\bf Constant approximation algorithms for
embedding } \centerline{\Large\bf graph metrics into trees and
outerplanar graphs\footnote{\noindent An
extended abstract of this paper will appear in the proceedings of APPROX-RANDOM 2010}} 

\vspace{6mm}

\centerline{{\sc V. Chepoi$^{\small 1}$, F. F. Dragan$^{\small 2},$  I. Newman$^{\small 3}$, Y. Rabinovich$^{\small 3},$ and   Y. Vax\`es}$^{\small 1}$}

\vspace{3mm}
\medskip
\centerline{$^{1}$Laboratoire d'Informatique Fondamentale,}
\centerline{Universit\'e d'Aix-Marseille,}
\centerline{Facult\'e des Sciences de Luminy,} \centerline{F-13288
Marseille Cedex 9, France} \centerline{\{chepoi, vaxes\}@lif.univ-mrs.fr}

\medskip
\centerline{$^{2}$Computer Science Department,}
\centerline{Kent State University,}
\centerline{Kent, OH 44242, USA}
\centerline{dragan@cs.kent.edu}

\medskip
\centerline{$^{3}$Department of Computer Science,}
\centerline{University of Haifa,}
\centerline{Mount Carmel, Haifa 31905, Israel}
\centerline{\{ilan,yuri\}@cs.haifa.ac.il}

\bigskip

\vspace{7mm}
\begin{footnotesize} \noindent {\bf Abstract.} In this paper, we present a simple factor 6 algorithm for approximating the optimal multiplicative distortion of embedding
a graph metric into a tree metric  (thus improving and simplifying the factor 100 and 27 algorithms of B\v{a}doiu, Indyk, and Sidiropoulos (2007) and
B\v{a}doiu, Demaine, Hajiaghayi, Sidiropoulos, and Zadimoghaddam (2008)). We also present  a constant factor algorithm for approximating the
optimal distortion of embedding a graph metric into an outerplanar metric. For this, we introduce a general notion
of metric relaxed minor and show that if $G$ contains an $\alpha$-metric relaxed $H$-minor, then the distortion
of any embedding of $G$ into any metric induced by a $H$-minor free graph
is $\geq \alpha$. Then, for $H=K_{2,3}$, we present an algorithm  which either finds an $\alpha$-relaxed minor, or produces an
$O(\alpha)$-embedding into an outerplanar metric.
\end{footnotesize}

\section{Introduction}

\subsection{Avant-propos}

The structure of the shortest-path metrics of special classes of
graphs, in particular, graph families defined by a set of forbidden minors
(e.g., line metrics, tree metrics, planar metrics)
is one of the main areas in the theory of metric spaces.
From the algorithmic point of view, such metrics typically have more
structure than general metrics, and this structure can often be exploited
algorithmically. Thus, if the input metric can be well approximated
by a special metric, this usually leads to an algorithmic advantage;
see, e.g.,~\cite{InMa} for a survey of embeddings and their algorithmic
applications. One way of understanding this structure is to study
the low distortion embeddings from one metric class to another. To do
this successfully, one needs to develop tools allowing a decomposition of the host space
consistent with the embedded space. If this is impossible, one
usually learns much about the limitations of the host space and the richness
of the embedded space. In this paper we pursue this direction and study the embeddings
into tree metrics and the metrics of $K_{2,3}$-minor free graphs (the outerplanar
metrics essentially,   because each 2-connected component of a $K_{2,3}$-minor free graph
is either outerplanar or a $K_4$).

The study of tree metrics can be traced back to the beginning of the
20th century, when it was first realized that weighted trees can in some
cases serve as an (approximate) model for the description of evolving
systems.  More recently, as indicated in~\cite{ShTa}, it was observed
that certain Internet originated metrics display tree-like properties.  It is well known~\cite{SeSt} that tree
metrics have a simple structure: $d$ is a tree metric
if and only if all submetrics of $d$ of size 4 are such.  Moreover, the
underlying tree is unique, easily reconstructible, and has rigid local structure
corresponding to the local structure of $d$. But what about the structure of {\em
approximately} tree metrics? We have only partial answers
for this question, and yet what we already know seems to indicate that
a rich theory might well be hiding there. The strongest results were obtained, so far, for the {\em additive}
distortion. A research on
the algorithmic aspects of finding a tree metric of least additive
distortion  has culminated in the paper \cite{AgBaFaNaPa}  (see also
\cite{ChFi}), where a 6-approximation algorithm was established (in
the notation of \cite{AgBaFaNaPa}, it is a $3$-approximation algorithm,  however, in our more restrictive definition,
requiring that the metric is dominated by the approximating one,
it is a $6$-approximation), together with a (rather close)
hardness result. Relaxing the local condition on $d$ by allowing
its size-4 submetrics to be $\delta$-close to a tree metric, one gets
precisely Gromov's $\delta$-hyperbolic geometry. For study of
algorithmic and other aspects of such geometries, see e.g.~\cite{ChDrEs++,KrLe}.

The situation with the {\em multiplicative} distortion is less satisfactory.
The best result for embedding general metrics into tree metrics is obtained in~\cite{BaInSi}:
the approximation factor is exponential in $\sqrt{\log \Delta }/\log\log n$,
where $\Delta$ is the aspect ratio. Judging from the parallel results
of~\cite{BaChInSi} for embedding into line metrics, it is conceivable that any constant factor
approximation for optimal embedding general metrics into tree metrics is NP-hard.  For some small
constant $\gamma$, the hardness result of \cite{AgBaFaNaPa}  implies that it is NP-hard
to approximate the multiplicative distortion better than $\gamma$ even for metrics that
come from unit-weighted graphs. For a special interesting case of shortest path metrics of {\it unit-weighted}
graphs, \cite{BaInSi} gets a large (around 100) constant approximation factor
(which was improved in \cite{BaDeHaSiZa}
to a factor 27). The proof introduces a certain metric-topological obstacle for getting
embeddings of distortion better than $\alpha$, and then algorithmically
either produces an $O(\alpha)$-embedding, or an $\alpha$-obstacle.
Let us mention that such an obstacle was used also in~\cite{EP04}, and,
essentially, in~\cite{RR98}.

\subsection{Our results}
In this paper, we study the embeddings of {\em unweighted} (i.e., unit-weighted)
{\em graph} metrics into tree metrics and outerplanar metrics.
Using  a decomposition procedure developed earlier in~\cite{BrChDr,ChDr}, we
simplify and improve the construction of~\cite{BaInSi} for embedding into tree metrics.
The improved constant is 6. We also introduce the notions
of relaxed and metric relaxed minors and show that
if $G$ contains an $\alpha$-metric relaxed $H$-minor, then the distortion
of any embedding of the metric of $G$ into any metric induced by a $H$-minor free graph
is at least $\alpha$. This generalizes the obstacle of~\cite{BaInSi}.
Using this newly defined $H$-obstacle, we are able to show that it is an essential
obstacle not only for trees, but also for graphs without  $H=K_{2,3}$ minors as well.
We further develop an efficient algorithm which either embeds the input graph $G$ into
an outerplanar metric with distortion $O(\alpha)$, or finds an $\alpha$-metric
relaxed $K_{2,3}$-minor in $G$. This is a first result of this kind for any
$H$ different from a $C_4$ (which is the
corresponding $\alpha$-metric relaxed minor corresponding to the four-point
condition used for embedding into tree-metrics). It is our feeling that this obstacle may prove essential
for other forbidden $H$'s, notably $K_{2,r}$, hopefully series-parallel graphs,
and beyond.

\subsection{Preliminaries}
A metric space $(X,d)$ is {\it isometrically embeddable} into a host
metric space $(Y,d')$ if there exists a map $\varphi: X\mapsto Y$
such that $d'(\varphi(x),\varphi(y))=d(x,y)$ for all $x,y\in X.$ In
this case we say that $X$  is a subspace of $Y.$ More generally,
$\varphi: X\mapsto Y$ is an {\it embedding with (multiplicative)
distortion} $\lambda\ge 1$ if  $d(x,y)\le
d'(\varphi(x),\varphi(y))\le \lambda\cdot d(x,y)$ for all $x,y\in X$
(note that embedding here is non-contracting; this could be relaxed
to contracting embeddings as well, but will not be important in what
follows). Given a metric space $(X,d)$ and a class ${\mathcal M}$ of
host metric spaces, we denote by $\lambda^*:=\lambda^*(X, {\mathcal
M})$ the minimum distortion of an embedding of $(X,d)$ into a member
of $\mathcal M$.  Analogously, $\varphi: X\mapsto Y$ is an {\it
embedding with additive distortion} $\lambda\ge 0$ if  $d(x,y)\le
d'(\varphi(x),\varphi(y))\le d(x,y)+\lambda$ for all $x,y\in X.$ In
a similar way, we can define the minimum additive distortion for
embedding of a metric space $(X,d)$ into a  class ${\mathcal M}$ of
host metric spaces.  In this paper, we consider finite connected
unweighted graphs as input metric spaces and {\it tree metrics}
(trees) or {\it outerplanar metrics}  (and they relatives)  as the
class of host metric spaces. If not specified, all our results
concern embeddings with multiplicative distortion. For a connected
unweighted graph $G=(V,E),$ we denote by $d_G(u,v)$ the
shortest-path distance between two vertices $u$ and $v$ of $G.$  A
finite metric  space $(X,d)$ is called a {\it tree metric} if it
isometrically embeds into a tree, i.e., there exists a weighted
tree $T=(X',E')$ such that $X\subseteq X'$ and $d(u,v)=d_T(u,v)$ for
any two points $u,v\in X,$ where $d_T(u,v)$ is the length of the
unique path connecting $u$ and $v$ in $T.$ Analogously, an {\it
outerplanar metric} is a metric space isometrically embeddable into
an outerplanar weighted graph. We denote by $\mathcal T$ the class
of all tree metric spaces and by $\mathcal O$ the class of
outerplanar metric spaces. Note that $\mathcal T$ is a proper
subclass of $\mathcal O$.

\section{Preliminary results}
In this section, we establish some properties of layering  partitions and of embeddings with distortion $\lambda$ of graph metrics into weighted graphs.

\subsection{Layering partitions}

We now briefly describe the layering partitions and establish some of their properties. The layering partitions have been
introduced in the papers \cite{BrChDr,ChDr} and recently
used in a slightly more general forms in both approximation algorithms  for embedding graph metric into trees
\cite{BaDeHaSiZa,BaInSi} as well as in some other similar contexts  \cite{ChDrEs++,DoDrGaYa,DoGa}.

Let $G=(V,E)$ be an unweighted
connected graph with a distinguished vertex $s$ and let $r:=\max\{d_G(s,x): x\in V\}$. A
{\it layering} of $G$ with respect to $s$ is the
decomposition of $V$ into the {\it spheres} $L^i=\{ u\in
V: d(s,u)=i\},$ $i=0,1,2,\ldots,r.$ A {\it layering partition}
${\mathcal LP}(s)=\{L^i_1,\ldots,L^i_{p_i} : i=0,1,2,\ldots, r\}$ of $G$ is
a partition of each $L^i$ into {\it clusters}
$L^i_1,\ldots,L^i_{p_i}$ such that two vertices $u,v\in L^i$ belong
to the same  cluster $L^i_j$ if and only if they can be connected by
a path outside the ball $B_{i-1}(s)$ of radius $i-1$ centered at
$s$. It was shown in \cite{ChDr} that
for a given unweighted graph $G$ such a layering partition can be
found in linear time. Let $\Gamma$ be a graph whose vertex set is the set of all clusters
$L_j^i$ in a layering partition ${\mathcal LP}$ of a graph $G.$ Two
vertices $C=L_j^i$ and $C'=L_{j'}^{i'}$ are adjacent in $\Gamma$ if and
only if there exist $u\in L_j^i$ and $v\in L^{i'}_{j'}$ such that
$u$ and $v$ are adjacent in $G$ (see Fig. \ref{fig:partition}). It
is shown in \cite{ChDr} that $\Gamma$ is a tree, called the {\it
layering tree} of $G$, and that $\Gamma$ is computable in linear
time in the size of $G$.  In what follows, we assume that
$\Gamma$ is rooted at cluster  $\{s\}$.

\medskip


\begin{figure}[tbh]
\vspace*{-1.2cm}
\begin{center}
\includegraphics[height=5.0cm]{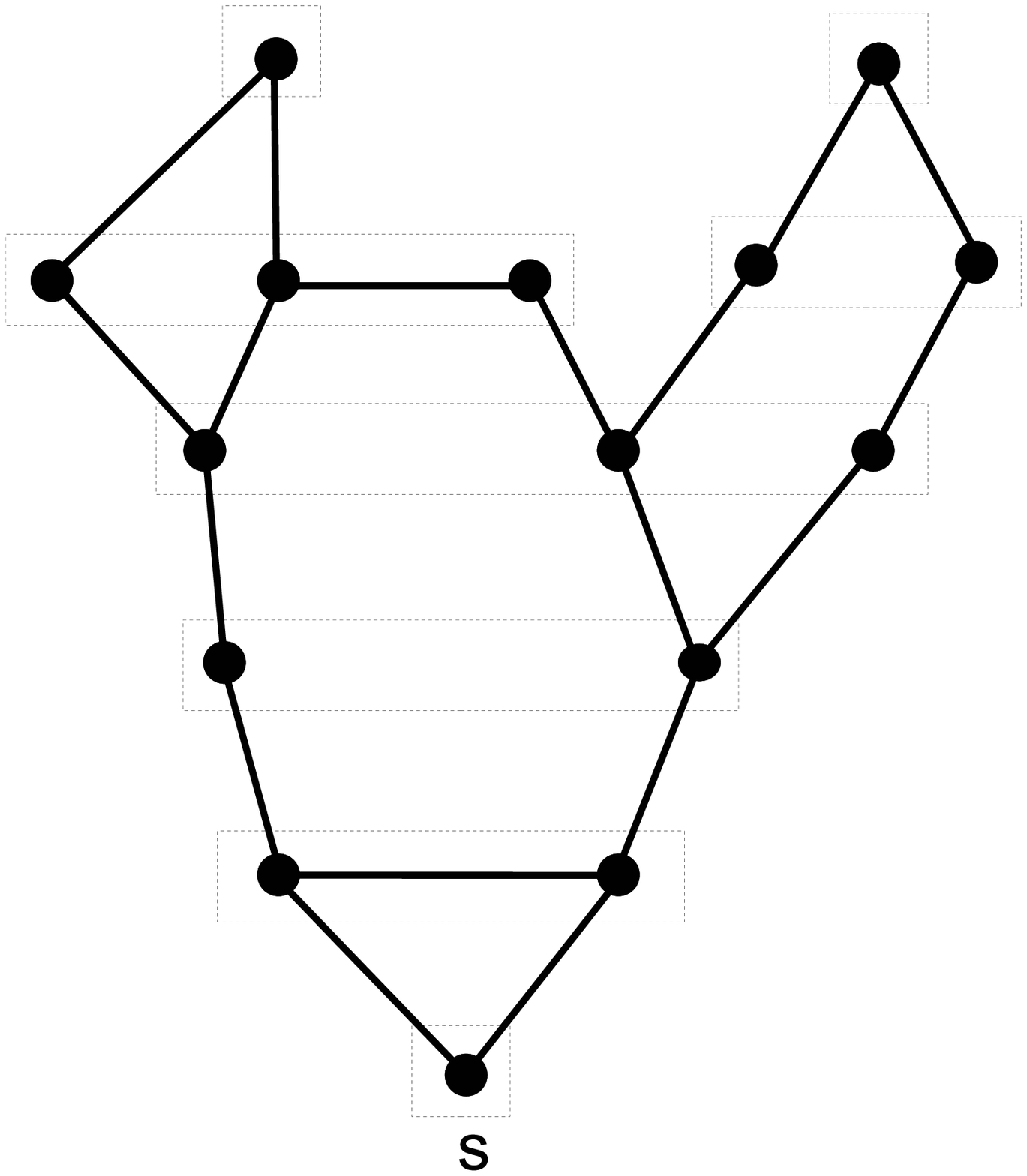}\hspace*{-0.1cm}
\includegraphics[height=5.0cm]{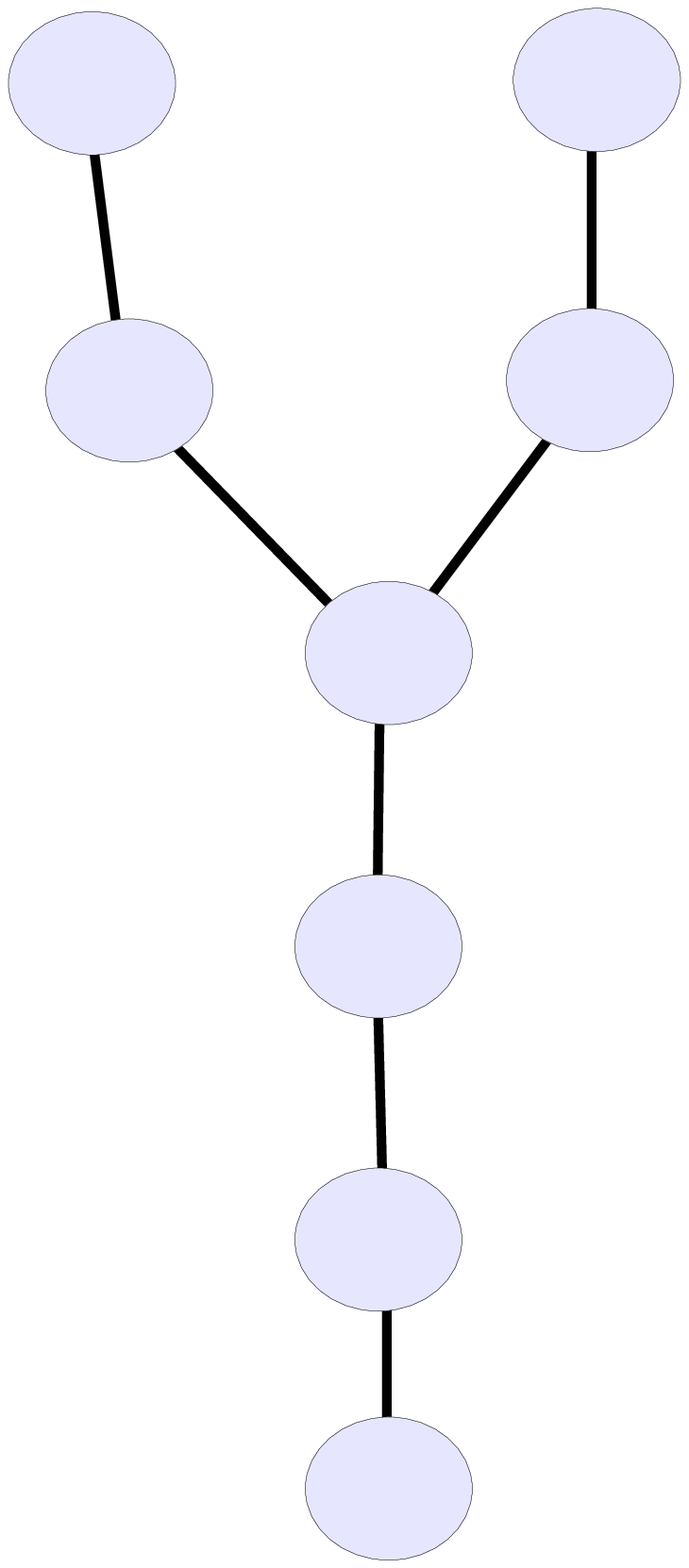}\hspace*{-1.4cm}
\includegraphics[height=5.5cm]{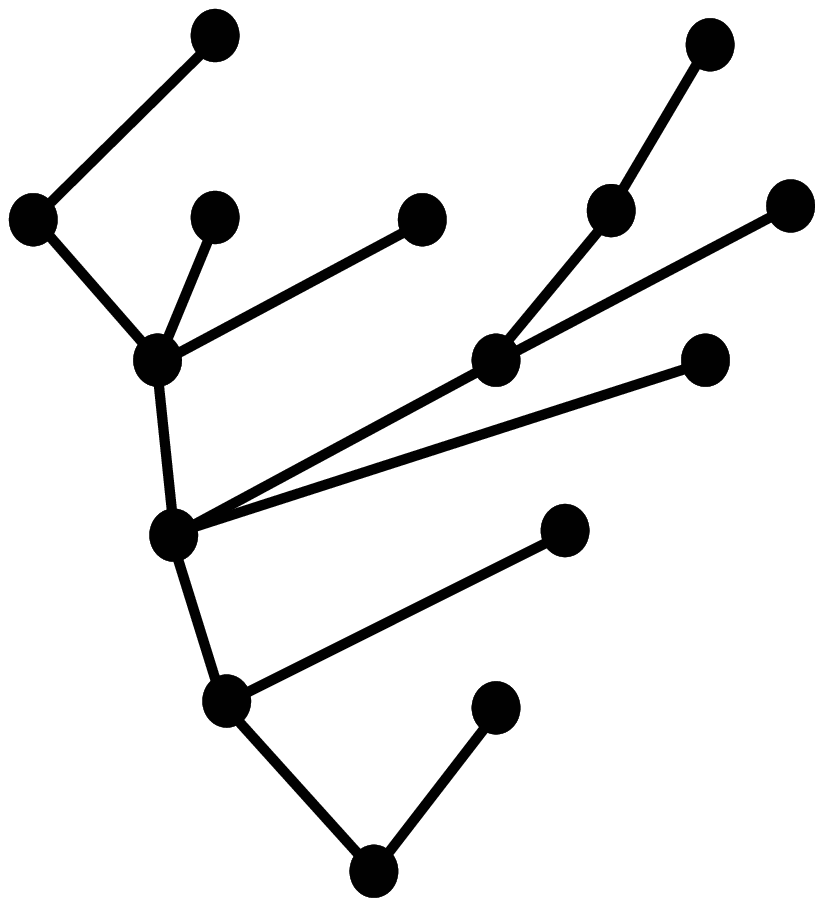}
\end{center}
\vspace*{-1cm} \caption{\label{fig:partition} A layering partition of $G$ and the trees $\Gamma$ and  $H$ associated with this layering
partition.}
\end{figure}

We can construct a new tree $H=(V,F)$ for a graph $G$ (closely reproducing the global structure
of the layering tree $\Gamma$) by identifying
for each cluster $C=L^i_j\in {\mathcal LP}$ an arbitrary vertex
$x_C\in L^{i-1}$ which has  a neighbor in $C=L^i_j$ and by making
$x_C$ adjacent in $H$ with all vertices $v\in C$ (see the rightmost
picture in Fig. \ref{fig:partition}). Vertex $x_C$ will be called
{\it support vertex} for cluster $C=L^i_j$. In what follows, we
assume that $H$ is rooted at vertex $s$.

Let $D$ be the largest diameter of a cluster in a layering
partition ${\mathcal LP}$ of $G$, i.e., $D:=\max_{C\in
{\mathcal LP}}\max_{v,u\in C}\{d_G(u,v)\}$. Then, the following
result (also implicitly  used in \cite{BrChDr,ChDr,ChDrEs++} in
particular cases) shows that the additive distortion of the
embedding of $G$ into $H$ is essentially $D$:

\begin{proposition}\label{lm:treeH} For any vertices $x,y$ of $G,$
   $d_H(x,y)-2 \leq d_G(x,y)\leq d_H(x,y)+D.$
\end{proposition}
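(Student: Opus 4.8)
The plan is to prove the two inequalities $d_H(x,y) - 2 \le d_G(x,y)$ and $d_G(x,y) \le d_H(x,y) + D$ separately, by induction on the structure of the tree $H$ (equivalently, on distances to the root $s$).

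The plan is to prove the two inequalities separately, exploiting the fact that $H$ is ``level-preserving.'' Since $H$ is obtained by joining each vertex of $L^i$ to a support vertex lying in $L^{i-1}$, we have $d_H(s,v)=d_G(s,v)$ for every vertex $v$, and (viewing $H$ as rooted at $s$) the $H$-parent of any vertex lies exactly one level closer to $s$. Fix $x,y$, write $i_x,i_y$ for their levels, let $z$ be the nearest common ancestor of $x$ and $y$ in $H$ and $i_z$ its level, so that $d_H(x,y)=(i_x-i_z)+(i_y-i_z)$. The bridge between $H$ and the layering partition is the observation that along any shortest path in $G$ from a vertex $u$ to $s$, the clusters of the successive vertices are precisely the clusters lying on the path in $\Gamma$ from the cluster of $u$ up to the root. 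Indeed, consecutive vertices of such a path lie in consecutive levels and hence in adjacent clusters of $\Gamma$, and each non-root cluster $L^i_j$ has a \emph{unique} neighbour in $\Gamma$ inside $L^{i-1}$: two vertices of $L^{i-1}$ each adjacent to some vertex of a common cluster $L^i_j$ are connected by a path avoiding $B_{i-2}(s)$, hence lie in a common cluster of $L^{i-1}$.

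Next I would track the ascending $H$-paths. Walking up in $H$ from $x$, each step replaces a vertex by the support vertex of its cluster, i.e.\ moves to the parent cluster in $\Gamma$; so the clusters visited are exactly the $\Gamma$-ancestors of the cluster $C_x$ of $x$, and likewise from $y$ with the cluster $C_y$ of $y$. Therefore $z$ lies in a cluster $C_z$ that is a common ancestor in $\Gamma$ of $C_x$ and $C_y$. More precisely, the two upward chains first agree at $z$: they meet either already inside the $\Gamma$-nearest-common-ancestor $\hat C$ of $C_x,C_y$ (in which case $C_z=\hat C$), or, if they enter $\hat C$ at distinct vertices, at the common support vertex $x_{\hat C}$ one level above (in which case $C_z$ is the parent of $\hat C$). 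In either case $i_{\hat C}\le i_z+1$.

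For the upper bound, take a shortest path in $G$ from $x$ to $s$ and let $x'$ be its vertex at level $i_z$; by the observation of the first paragraph $x'$ lies in $C_z$, and likewise the level-$i_z$ vertex $y'$ of a shortest $y$--$s$ path lies in $C_z$. Hence $d_G(x',y')\le D$ by the definition of $D$, and concatenating shortest $x$--$x'$, $x'$--$y'$ and $y'$--$y$ paths yields
\[
d_G(x,y)\ \le\ (i_x-i_z)+D+(i_y-i_z)\ =\ d_H(x,y)+D .
\]
For the lower bound, take a shortest path $Q$ in $G$ from $x$ to $y$; the clusters of its successive vertices form a walk in the tree $\Gamma$ from $C_x$ to $C_y$, hence pass through $\hat C$, so some vertex $q$ of $Q$ lies in $\hat C$. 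Since subpaths of a shortest path are shortest and each edge changes the level by at most one,
\[
d_G(x,y)=d_G(x,q)+d_G(q,y)\ \ge\ (i_x-i_{\hat C})+(i_y-i_{\hat C})\ \ge\ d_H(x,y)-2 ,
\]
where the last inequality uses $i_{\hat C}\le i_z+1$.

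The routine part consists of the two triangle-inequality estimates together with the ``one level per edge'' bound. The part that needs care — and which I expect to be the main, if modest, obstacle — is the structural bookkeeping of the second paragraph: checking that walking up in $H$ follows the $\Gamma$-ancestor path of a cluster (which rests on uniqueness of the upward neighbour in $\Gamma$), and that the two ascending chains from $x$ and from $y$ can miss the $\Gamma$-nearest-common-ancestor $\hat C$ by at most one level. Once that is in place, both bounds follow at once.
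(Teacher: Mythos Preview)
Your proof is correct and follows essentially the same approach as the paper. Both arguments hinge on the fact that $d_H(x,y)$ equals $d_\Gamma(C_x,\hat C)+d_\Gamma(C_y,\hat C)$ or that quantity plus $2$ (your $i_{\hat C}\le i_z+1$ is exactly this), then bound $d_G(x,y)$ above via a detour through two points of $\hat C$ (or $C_z$) and below via the observation that any $x$--$y$ path in $G$ must visit $\hat C$; you have simply spelled out the structural bookkeeping (uniqueness of the upward $\Gamma$-neighbour, the $H$-walk tracking $\Gamma$-ancestors) that the paper leaves implicit in the phrase ``by construction of $H$.''
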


\begin{proof}
\ignore{ 
Let $C_x$ and $C_y$ be the clusters containing
the vertices $x$ and $y$, respectively. Denote by $C$ the cluster which is the
nearest common ancestor of $C_x$ and $C_y$ in the layering
tree $\Gamma$ of $G$. Let $x',y'\in C$ be the ancestors of $x$ and $y$, respectively,
in the tree $H.$  Let
$a:=d_H(x,x')$ and $b:=d_H(y,y')$. By construction of $H$, we have
$d_G(x,x')=d_H(x,x')=a$, $d_G(y,y')=d_H(y,y')=b$, and $d_H(x,y)$ is
equal either to $a+b$ or to $a+b+2$. Therefore, by triangle
inequality, $d_G(x,y)\leq d_G(x,x')+ d_G(x',y')+d_G(y,y')\leq a+b+
D\leq d_H(x,y)+D$. By the choice of $C$, we obtain also that
$d_G(x,y)\geq d_G(x,x')+ d_G(y,y')=a+b\geq d_H(x,y)-2$.}
Let $C_x$ and $C_y$ be the clusters containing the vertices $x$ and
$y$, respectively. Denote by $C$ the cluster which is the nearest
common ancestor of $C_x$ and $C_y$ in the layering tree $\Gamma$ of
$G$. The assertion is trivial if $C_x=C_y=C$. For $C \neq C_x$, let
$x',y'\in C$ be the ancestors of $x$ and $y$, respectively, in a
BFS(G,s)-tree. Then $d_{\Gamma}(C_x,C)=d_G(x,x')$ and
$d_{\Gamma}(C_y,C) =d_G(y,y')$. By construction of $H$, $d_H(x,y)$
is equal either to $d_{\Gamma}(C_x,C)+d_{\Gamma}(C_y,C)$ or to
$d_{\Gamma}(C_x,C)+d_{\Gamma}(C_y,C)+2$. Thus, by the triangle
inequality,
$$ d_G(x,y) \leq d_G(x,x') + d_G(x',y') + d_G(y,y') \leq
d_{\Gamma}(C_x,C)+d_{\Gamma}(C_y,C)+D \leq d_H(x,y) + D.
$$
On the other hand, by definition of clusters, $d_G(x,y) \geq
d_G(x,x') + d_G(y,y') \geq d_H(x,y)-2.$
\end{proof}

Note that tree $H,$ like any BFS-tree, preserves graph distances between the root
$s$ and any other vertex of $G.$ We can locally modify $H$ by assigning uniform
weights to its edges or by adding Steiner points to obtain a number of other desired
properties (like, non-expansiveness, non-contractibility, etc.).
For example, assigning length $w:=D+1$ to each edge of $H,$ we will get
a uniformly weighted tree $H_w=(V,F,w)$ in which $G$ embeds with multiplicative
distortion essentially equal to $D+1:$

\begin{corollary} \label{col:treeHw}
For any vertices $u,v$ of $G,$ $d_G(u,v)\leq d_{H_w}(u,v)\leq  (D+1)(d_G(u,v)+2).$
\end{corollary}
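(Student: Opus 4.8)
The plan is to derive Corollary~\ref{col:treeHw} directly from Proposition~\ref{lm:treeH} by tracking how the uniform rescaling of edge weights interacts with the additive estimates. Since $H_w$ is obtained from $H$ by giving every edge length $w=D+1$, we have $d_{H_w}(u,v) = (D+1)\, d_H(u,v)$ for all $u,v \in V$; this is the only structural fact about $H_w$ we need, and it holds because distances in a uniformly weighted tree are just the hop-distances scaled by the common edge length.

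First I would establish the upper bound. By Proposition~\ref{lm:treeH}, $d_H(x,y) \le d_G(x,y)+2$ for all vertices $x,y$, so
$$
d_{H_w}(u,v) = (D+1)\, d_H(u,v) \le (D+1)\bigl(d_G(u,v)+2\bigr),
$$
which is exactly the claimed right-hand inequality. For the lower bound I would use the other half of Proposition~\ref{lm:treeH}, namely $d_G(x,y) \le d_H(x,y)+D$, i.e. $d_H(u,v) \ge d_G(u,v)-D$. Multiplying by $D+1$ gives $d_{H_w}(u,v) \ge (D+1)\bigl(d_G(u,v)-D\bigr)$, so it remains to check that $(D+1)(d_G(u,v)-D) \ge d_G(u,v)$, i.e. $d_G(u,v)\cdot D \ge D(D+1)$, i.e. $d_G(u,v) \ge D+1$. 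This is precisely where a little care is needed: the inequality is only immediate when $u$ and $v$ are far enough apart.

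The main (and essentially only) obstacle is therefore the short-distance regime $d_G(u,v) \le D$, and in particular the case $u,v$ in a common cluster. Here I would argue more carefully using the hop-count structure of $H$ rather than the crude bound from Proposition~\ref{lm:treeH}: if $u,v$ lie in different clusters then any path between them in $H$ must pass through a strictly higher ancestor cluster, so $d_H(u,v) \ge 2$, hence $d_{H_w}(u,v) \ge 2(D+1) \ge D+1 \ge d_G(u,v)$ whenever $d_G(u,v)\le D+1$; and if $u,v$ lie in the same cluster $C=L^i_j$ with common support vertex $x_C$, then $d_H(u,v)=2$ unless $u=v$, again giving $d_{H_w}(u,v) = 2(D+1) \ge D+1 \ge d_G(u,v)$, while the case $u=v$ is trivial. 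Combining this with the computation of the previous paragraph for $d_G(u,v)\ge D+1$ yields $d_G(u,v) \le d_{H_w}(u,v)$ in all cases, completing the proof. (Alternatively, one can observe that $d_G$ is integer-valued and $d_H(u,v)\ge 1$ whenever $u\ne v$, so $d_{H_w}(u,v)\ge D+1\ge d_G(u,v)$ for $d_G(u,v)\le D+1$, which is a slightly slicker way to package the same case analysis.)
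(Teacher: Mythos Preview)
Your overall strategy---scale Proposition~\ref{lm:treeH} by the uniform edge weight $w=D+1$---is exactly the intended derivation, and the upper bound and the large-distance case of the lower bound are both fine. There is, however, a genuine slip in your primary case analysis for the short-distance regime: the claim ``if $u,v$ lie in different clusters then any path between them in $H$ must pass through a strictly higher ancestor cluster, so $d_H(u,v)\ge 2$'' is false. By construction of $H$, every edge of $H$ joins a vertex $v\in C=L^i_j$ to the support vertex $x_C\in L^{i-1}$, and $x_C$ lies in a \emph{different} cluster from $v$; hence $d_H(x_C,v)=1$ with $x_C,v$ in distinct clusters. So the ``different clusters'' branch of your argument does not deliver $d_H\ge 2$.

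Fortunately, your parenthetical ``slicker'' alternative is correct and is all that is needed: for $u\ne v$ we have $d_H(u,v)\ge 1$, hence $d_{H_w}(u,v)\ge D+1\ge d_G(u,v)$ whenever $d_G(u,v)\le D+1$, while for $d_G(u,v)\ge D+1$ your computation $(D+1)(d_G(u,v)-D)\ge d_G(u,v)$ applies. I would simply delete the faulty cluster-based case split and promote the alternative to the main argument.
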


By adding Steiner points and using edge lengths 0 and 1, the tree $H$
can be easily transformed into a tree $H'$ which has the same
additive distortion and satisfies the non-expansive property. For
this, for each cluster $C:=L^i_j$ we introduce a Steiner point
$p_C,$ and add an edge of length 0 between any vertex of $C$ and
$p_C$ and an edge of length 1 between $p_C$ and the support vertex
$x_C$ for $C$, defined above.

\begin{corollary} \label{col:treeH'} For any vertices $u,v$ of $G,$
$d_{H'}(u,v)\leq d_{G}(u,v)\leq  d_{H'}(u,v)+D$.
\end{corollary}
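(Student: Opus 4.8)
The plan is to build the tree $H'$ exactly as described in the paragraph preceding the statement, and then bound $d_{H'}(u,v)$ against $d_G(u,v)$ by relating both to the quantities $d_H(x,y)$ controlled by Proposition \ref{lm:treeH}. First I would record the effect of the Steiner-point construction on distances in $H'$. For any vertex $v$ lying in a cluster $C$, we have $d_{H'}(v,p_C)=0$ and $d_{H'}(v,x_C)=1$; more generally, every original edge of $H$ of the form $vx_C$ is replaced by a path $v\,p_C\,x_C$ of total length $0+1=1$, so $d_{H'}(u,v)\le d_H(u,v)$ for all $u,v$, and in fact walking from a vertex to its cluster's Steiner point is free. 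The key observation is that two vertices $u,v$ in the \emph{same} cluster $C$ satisfy $d_{H'}(u,v)=d_{H'}(u,p_C)+d_{H'}(p_C,v)=0$, whereas in $H$ they were at distance $2$ (through $x_C$); this is precisely the slack that lets us drop the additive $-2$ on the lower side and controls the upper side.

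Next I would prove the two inequalities. For the right-hand inequality $d_G(u,v)\le d_{H'}(u,v)+D$: let $C_u,C_v$ be the clusters of $u,v$ and $C$ their nearest common ancestor in $\Gamma$, with $u',v'\in C$ their BFS-ancestors as in the proof of Proposition \ref{lm:treeH}. Then $d_{H'}(u,v)=d_{H'}(u,u')+d_{H'}(u',v')+d_{H'}(v',u')$-type decomposition along the tree gives $d_{H'}(u,v)=d_{\Gamma}(C_u,C)+d_{\Gamma}(C_v,C)$ (the two ``$+1$'' per level going up, and $d_{H'}(u',v')=0$ since $u',v'\in C$), which equals $d_G(u,u')+d_G(v,v')$. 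Hence, by the triangle inequality in $G$, $d_G(u,v)\le d_G(u,u')+d_G(u',v')+d_G(v',v)\le d_{H'}(u,v)+D$, using $d_G(u',v')\le D$ because $u',v'$ lie in the common cluster $C$. For the left-hand inequality $d_{H'}(u,v)\le d_G(u,v)$: it suffices to show $H'$ is non-expansive on edges of $G$, i.e. $d_{H'}(u,v)\le 1$ whenever $uv\in E(G)$; then the general bound follows by concatenating along a shortest $G$-path. If $uv\in E(G)$, then $u,v$ lie in clusters that are either equal or adjacent in $\Gamma$ (consecutive levels), so in $H'$ the vertices $u,v$ are connected via $u\,p_{C_u}\,x_{C_u}$ or directly $u\,p_{C_u}\,v$; a short case check on whether $C_u=C_v$, $C_u$ is the parent of $C_v$, or vice versa shows the $H'$-distance is $0$ or $1$, in all cases $\le 1=d_G(u,v)$.

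The only mildly delicate point — and the one I would write out carefully — is the edge case analysis for the non-expansiveness: when $uv\in E(G)$ with $u\in L^{i-1}$, $v\in L^i$, one must check that the support vertex $x_{C_v}$ chosen for $C_v$ need \emph{not} be $u$ itself, yet $d_{H'}(u,v)\le 1$ still holds because $u$ and $x_{C_v}$ both lie in $L^{i-1}$ and hence in the same cluster on level $i-1$ (they are joined by a path outside $B_{i-2}(s)$, namely through $v$), so $d_{H'}(u,x_{C_v})=0$ and $d_{H'}(u,v)=d_{H'}(u,x_{C_v})+d_{H'}(x_{C_v},p_{C_v})+d_{H'}(p_{C_v},v)=0+1+0=1$. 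If instead $u,v$ are in the same cluster the distance is $0\le 1$. This handles all cases, and combining the two inequalities yields the claim.
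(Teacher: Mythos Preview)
Your proof is correct and follows precisely the approach implicit in the paper, which states the corollary without proof as an immediate consequence of the Steiner-point construction and Proposition~\ref{lm:treeH}. Your careful verification that $u$ and $x_{C_v}$ land in the same cluster of $L^{i-1}$ (via the path $u\,v\,\cdots\,w\,x_{C_v}$ lying outside $B_{i-2}(s)$) is exactly the point one must check for non-expansiveness, and your computation $d_{H'}(u,v)=d_\Gamma(C_u,C)+d_\Gamma(C_v,C)$ is the right way to see the upper bound; there is a small typo (``$d_{H'}(v',u')$'' should read ``$d_{H'}(v',v)$'') but the intent is clear.
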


By replacing each edge in $H'$ with edge of length
$w:=\frac{D+1}{2}$, we obtain a tree $H'_w$ with the following property:

\begin{corollary}\label{cor:H'w} For any vertices $u,v$ of $G,$
$d_{G}(u,v)\leq d_{H'_w}(u,v)\leq  (D+1) (d_{G}(u,v)+1)$.
\end{corollary}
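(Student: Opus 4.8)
The plan is to express $d_{H'_w}$ in terms of $d_{H'}$ and then invoke Corollary~\ref{col:treeH'}. Since $H'_w$ is the \emph{same} tree as $H'$ with every edge reassigned the common length $w=\frac{D+1}{2}$, we have $d_{H'_w}(u,v)=w\cdot N(u,v)$, where $N(u,v)$ denotes the number of edges on the unique $u$--$v$ path $P$ in $H'$. So it suffices to show that, for $u\neq v$,
$$2\,d_{H'}(u,v)\ \le\ N(u,v)\ \le\ 2\,d_{H'}(u,v)+2 .$$
Granting this, $d_{H'_w}(u,v)=wN(u,v)$ lies between $(D+1)d_{H'}(u,v)$ and $(D+1)(d_{H'}(u,v)+1)$, and the two claimed inequalities follow by plugging in $d_{H'}(u,v)\le d_G(u,v)\le d_{H'}(u,v)+D$ from Corollary~\ref{col:treeH'}: for the upper bound, $d_{H'_w}(u,v)\le (D+1)(d_{H'}(u,v)+1)\le (D+1)(d_G(u,v)+1)$; for the lower bound one checks the cases $d_{H'}(u,v)\ge 1$ (where $(D+1)d_{H'}(u,v)\ge d_{H'}(u,v)+D\ge d_G(u,v)$) and $d_{H'}(u,v)=0$ (where $N(u,v)\ge 2$, so $d_{H'_w}(u,v)\ge 2w=D+1\ge d_G(u,v)$) separately; the case $u=v$ is trivial.

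For the bounds on $N(u,v)$, recall that $H'$ is bipartite with parts $V$ and $\{p_C\}_{C\in{\mathcal LP}}$ (every edge joins a vertex of a cluster to a Steiner point), and that the only edge of length $1$ incident to $p_C$ is $\{p_C,x_C\}$, all other edges at $p_C$ having length $0$. Hence $P$ alternates between vertices of $V$ and Steiner points, so $N(u,v)=2m$, where $m$ is the number of Steiner points on $P$; moreover each such Steiner point carries at most one length-$1$ edge of $P$, so the number of length-$1$ edges of $P$, which is exactly $d_{H'}(u,v)$, is at most $m$. This already gives the lower bound $N(u,v)=2m\ge 2\,d_{H'}(u,v)$ (and $N(u,v)\ge 2$ whenever $u\neq v$, again by bipartiteness).

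The upper bound $N(u,v)\le 2\,d_{H'}(u,v)+2$ is the main point, and amounts to showing that \emph{at most one} Steiner point on $P$ is incident to no length-$1$ edge of $P$, i.e. to two length-$0$ edges of $P$. If $p_C$ is such a Steiner point, then both neighbours of $p_C$ on $P$ lie in the cluster $C$; tracing one further step in each direction, and using that a vertex $z\in C$ can be an endpoint of a length-$1$ edge only as $z=x_{C'}$ for some child $C'$ of $C$, one sees that from $p_C$ the path $P$ descends into the subtree of $\Gamma$ rooted at one child of $C$ and, on the other side of $p_C$, into the subtree rooted at a different child of $C$ (and it never again leaves these subtrees, $P$ being simple); in the degenerate endpoint versions one of these ``sides'' is just the vertex $u$ or $v$, and then $C=C_u$ or $C=C_v$ is an ancestor of everything on $P$. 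In all cases $C$ must therefore be the nearest common ancestor of $C_u$ and $C_v$ in the layering tree $\Gamma$, and such a cluster is unique. Hence $m\le d_{H'}(u,v)+1$ and $N(u,v)=2m\le 2\,d_{H'}(u,v)+2$.

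The genuinely delicate step is this last structural claim — keeping the bookkeeping of length-$0$ versus length-$1$ edges along $P$ honest through the situations where $u$ or $v$ is itself the support vertex $x_C$ of some cluster (so that $P$ may begin or end with a length-$1$ edge), and where $C_u,C_v$ are comparable in $\Gamma$. Once ``a Steiner point of $P$ incident to two length-$0$ path-edges must be the nearest common ancestor cluster'' is established, the rest is a one-line substitution into Corollary~\ref{col:treeH'}.
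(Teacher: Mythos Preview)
Your argument is correct. The paper itself gives no proof of this corollary, treating it as an immediate consequence of the construction of $H'_w$ from $H'$; your edge-counting analysis (showing $2\,d_{H'}(u,v)\le N(u,v)\le 2\,d_{H'}(u,v)+2$ via the bipartite structure and the uniqueness of the ``double length-$0$'' Steiner point at the nearest common ancestor in $\Gamma$) is exactly the kind of computation the paper is tacitly relying on, and your case split for the lower bound when $d_{H'}(u,v)=0$ is the right way to handle the one nontrivial boundary case.
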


\subsection{Embeddings with distortion $\lambda$ of graph metrics}

We continue with two auxiliary standard results about embeddings.

\begin{lemma} \label{distortion_edge1} If $G=(V,E)$ is an unweighted graph, $G'=(V',E')$
is a weighted graph, and  $\varphi: V\mapsto V'$ is a mapping  such that $d_{G'}(\varphi(u),\varphi(v))\le \lambda$
for any edge $uv$ of $G,$ then   $d_{G'}(\varphi(x),\varphi(y))\le \lambda d_G(x,y)$ for any
pair of vertices $x,y$ of $G.$
\end{lemma}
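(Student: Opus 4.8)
The plan is to reduce the general bound to the edge bound by walking along a shortest path in $G$ and applying the triangle inequality in $G'$. Fix $x,y\in V$ and let $k:=d_G(x,y)$. Since $G$ is connected, there is a shortest path $x=v_0,v_1,\dots,v_k=y$ in $G$, so that each pair $v_{i-1}v_i$ is an edge of $G$ for $i=1,\dots,k$.

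Next I would invoke the hypothesis on each of these edges: $d_{G'}(\varphi(v_{i-1}),\varphi(v_i))\le\lambda$ for every $i$. Then, applying the triangle inequality in the (weighted) graph metric $d_{G'}$ repeatedly along the images $\varphi(v_0),\varphi(v_1),\dots,\varphi(v_k)$, we get
$$
d_{G'}(\varphi(x),\varphi(y))\ \le\ \sum_{i=1}^{k} d_{G'}(\varphi(v_{i-1}),\varphi(v_i))\ \le\ k\lambda\ =\ \lambda\, d_G(x,y),
$$
which is exactly the claimed inequality.

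There is essentially no hard step here: the only things used are that $d_{G'}$ is a genuine metric (so the triangle inequality and its finite extension hold) and that graph distance in the unweighted graph $G$ is realized by an edge-path of length $d_G(x,y)$. The one point worth stating carefully, if one wants full rigor, is the induction on $k$ that turns the two-term triangle inequality into the $k$-term version; this is routine. The case $k=0$ (i.e.\ $x=y$) is immediate since then both sides are $0$.
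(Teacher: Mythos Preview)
Your proof is correct and follows essentially the same approach as the paper: both take a shortest $xy$-path in $G$ and use the edge hypothesis on each of its edges, the paper by explicitly concatenating the corresponding short paths in $G'$, and you by invoking the triangle inequality in $d_{G'}$. These are equivalent standard arguments.
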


\begin{proof} Consider a shortest path $P$ of $G$ between arbitrary vertices $x,y$ of $G.$
For each edge $uv$ of $P$, $\varphi(u)$ and $\varphi(v)$
are connected in $G'$ by a path $P_{uv}$ of length $\le \lambda.$
Hence, $\varphi(x)$ and $\varphi(y)$ can be connected in the
subgraph of $G'$ induced by $\cup\{ P_{uv}: uv \mbox{ is an edge of
} P\}$ by a path with total length  of edges at most $\lambda
d_G(x,y).$ Hence, $d_{G'}(\varphi(x),\varphi(y))\le \lambda
d_G(x,y).$
\end{proof}

\begin{lemma} \label{distortion_edge2} If $G=(V,E)$ is an unweighted graph, $G'=(V',E')$
is a weighted graph, and  $\varphi: V\mapsto V'$ is a mapping  such that
$d_{G'}(\varphi(u),\varphi(v))\ge d_G(u,v)$ for any edge $\varphi(u)\varphi(v)$ of $G',$
then $d_{G'}(\varphi(x),\varphi(y))\ge d_G(x,y)$ for any pair of vertices $x,y$ of $G.$
\end{lemma}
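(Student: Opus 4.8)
The plan is to run the argument of Lemma~\ref{distortion_edge1} in reverse: rather than lifting a shortest path of $G$ to $G'$, I would project a shortest path of $G'$ down to $G$. Fix vertices $x,y$ of $G$ and let $P'$ be a shortest path of $G'$ from $\varphi(x)$ to $\varphi(y)$, with vertex sequence $\varphi(x)=z_0,z_1,\dots,z_m=\varphi(y)$. The one elementary fact I would lean on is that every sub-path of a shortest path is again a shortest path; in particular each individual edge $z_{i-1}z_i$ of $P'$ is a shortest $z_{i-1}$–$z_i$ path, so its weight equals $d_{G'}(z_{i-1},z_i)$, and hence $d_{G'}(\varphi(x),\varphi(y))=\sum_{i=1}^m d_{G'}(z_{i-1},z_i)$, the length being additive along $P'$.

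Next I would pass to the vertices of $P'$ lying in $\varphi(V)$. Writing them as $z_{i_0}=\varphi(u_0),\dots,z_{i_k}=\varphi(u_k)$ in the order they occur along $P'$ (so $u_0=x$ and $u_k=y$), the additivity above regrouped between consecutive images gives $d_{G'}(\varphi(x),\varphi(y))=\sum_{j=1}^k d_{G'}(\varphi(u_{j-1}),\varphi(u_j))$. Once each summand is bounded below by $d_G(u_{j-1},u_j)$, the triangle inequality in $G$ closes the argument:
$$d_{G'}(\varphi(x),\varphi(y))\;=\;\sum_{j=1}^k d_{G'}(\varphi(u_{j-1}),\varphi(u_j))\;\ge\;\sum_{j=1}^k d_G(u_{j-1},u_j)\;\ge\;d_G(u_0,u_k)\;=\;d_G(x,y).$$

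The step that carries the real content — and the one I expect to be the main obstacle — is precisely the per-pair inequality $d_{G'}(\varphi(u_{j-1}),\varphi(u_j))\ge d_G(u_{j-1},u_j)$. The hypothesis delivers this immediately as long as $\varphi(u_{j-1})\varphi(u_j)$ is an edge of $G'$, which happens whenever no vertex of $G'$ outside $\varphi(V)$ lies strictly between two images along $P'$; this is automatic when $V'=\varphi(V)$ (no auxiliary Steiner vertices), the situation in every application of this lemma (for instance to the uniformly weighted tree $H_w$, whose vertex set is that of $G$). In that case the shortest path $P'$ may be taken with all vertices in $\varphi(V)$, so the images $\varphi(u_{j-1}),\varphi(u_j)$ are genuinely adjacent in $G'$ and the hypothesis applies edge by edge. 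I would therefore either state the lemma under this assumption or, more generally, apply the hypothesis to every pair of images joined in $G'$ by a path internally avoiding $\varphi(V)$; with that understood, the chain of inequalities above gives the claim, in exact parallel with Lemma~\ref{distortion_edge1}.
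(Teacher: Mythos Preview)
Your argument is correct and is essentially the paper's own proof unrolled: the paper proceeds by induction on the number of edges in a shortest $\varphi(x)$--$\varphi(y)$ path in $G'$, peeling off the first edge $\varphi(x)\varphi(x')$ and applying the hypothesis plus the triangle inequality in $G$, which is exactly your summation written recursively. You are in fact more explicit than the paper about the implicit assumption that the intermediate vertices of the shortest path lie in $\varphi(V)$ (the paper simply names the neighbor of $\varphi(x)$ as ``$\varphi(x')$'' without comment), and you correctly note this holds in every application of the lemma.
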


\begin{proof} We proceed by induction on the number of edges in a shortest path between
$\varphi(x)$ and $\varphi(y)$ in $G'.$ If $\varphi(x)$
and $\varphi(y)$ are adjacent in $G',$ then we are done by our condition. Otherwise, let
$\varphi(x')$ be the neighbor of $\varphi(x)$ in such a
shortest path. By induction hypothesis, $d_{G'}(\varphi(x'),\varphi(y))\ge d_G(x',y).$
Since $d_{G'}(\varphi(x),\varphi(x'))\ge d_G(x,x'),$ the
triangle inequality  yields
$d_{G'}(\varphi(x),\varphi(y))=d_{G'}(\varphi(x),\varphi(x'))+d_{G'}(\varphi(x'),\varphi(y))\ge d_G(x,x')+d_G(x',y)\ge d_G(x,y),$
and we are done.
\end{proof}

\section{Approximation algorithm for embedding graph metrics into trees}

We describe now a simple factor 6
algorithm for approximating the optimal distortion
$\lambda^*=\lambda^*(G,{\mathcal T})$ of embedding finite unweighted graphs
$G$ into trees. For this, we first investigate the properties of
layering partitions of graphs which $\lambda$-embed into trees,
i.e., for each such graph $G=(V,E)$ there exists a tree $T=(V',E')$
with $V\subseteq V'$ such that
\begin{equation}
   d_G(x,y)\leq d_T(x,y)\hspace*{2cm}\mbox{(non-contractibility)} \label{eq1}
\end{equation}

\noindent
and
\begin{equation}
    d_T(x,y)\leq \lambda\cdot d_G(x,y)\hspace*{2cm}\mbox{(bounded expansion)} \label{eq2}
\end{equation}
for every $x,y\in V$.  Denote by $P_T(x,y)$
the path connecting the vertices $x,y$ in $T.$ For  $x\in V'$ and  $A\subseteq V',$ we denote by $d_T(x,A)=\min \{ d_T(x,v): v\in A\}$ the distance
from $x$ to $A.$ First we show that the diameters of clusters in a layering partition of such a graph $G$ are at most $3\lambda,$ allowing already to build a
tree with distortion $8\lambda^*$. Refining this property of layering partitions, we  construct in $O(|V||E|)$ time a  tree into which $G$
embeds with distortion $\le 6\lambda^*$.

\begin{lemma}\label{lm:distance_to_path} If a graph $G$ $\lambda$-embeds into a tree, then for any $x,y\in V,$ any path $P_G(x,y)$ of $G$ between $x,y$
and any vertex $c\in P_T(x,y),$ we have $d_T(c,P_G(x,y))\le \lambda/2.$
\end{lemma}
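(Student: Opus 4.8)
The claim is that if $G$ embeds into a tree $T$ with distortion $\lambda$, then for any $x,y \in V$, any path $P_G(x,y)$ in $G$, and any vertex $c$ on the tree-path $P_T(x,y)$, there is a vertex $z$ on $P_G(x,y)$ with $d_T(c,z) \le \lambda/2$. The plan is to walk along the $G$-path $P_G(x,y)$, track where its image lands relative to $c$ in the tree, and use an intermediate-value / parity argument on the tree structure together with the fact that consecutive vertices of $P_G$ are at tree-distance $\le \lambda$ (from \eqref{eq2}).

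**The main argument.**

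Since $c \in P_T(x,y)$, removing $c$ from $T$ (or rather looking at the two "sides'' of $c$ along $P_T(x,y)$) separates $x$ from $y$ in $T$: let $T_x$ be the component of $T - c$ containing $x$ and $T_y$ the one containing $y$ (if $c = x$ or $c = y$ the statement is trivial with $z = x$ or $z = y$). Now list the vertices of the $G$-path as $x = v_0, v_1, \dots, v_k = y$. The image $\varphi(v_0) = x$ lies in $T_x$ and $\varphi(v_k) = y$ lies in $T_y$. So there is a first index $i$ such that $\varphi(v_i)$ does not lie in $T_x$; then $\varphi(v_{i-1}) \in T_x$ while $\varphi(v_i) \in T_y \cup \{c\}$. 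The unique tree-path from $\varphi(v_{i-1})$ to $\varphi(v_i)$ must pass through $c$, hence $d_T(c, \varphi(v_{i-1})) + d_T(c, \varphi(v_i)) = d_T(\varphi(v_{i-1}), \varphi(v_i))$. But $v_{i-1}v_i$ is an edge of $G$, so by \eqref{eq2}, $d_T(\varphi(v_{i-1}), \varphi(v_i)) \le \lambda \cdot d_G(v_{i-1}, v_i) = \lambda$. Therefore $\min\{ d_T(c,\varphi(v_{i-1})), d_T(c,\varphi(v_i)) \} \le \lambda/2$, and since both $v_{i-1}, v_i \in P_G(x,y)$, we conclude $d_T(c, P_G(x,y)) \le \lambda/2$.

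**Where the care is needed.**

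The only genuinely delicate point is handling the boundary cases and the identification of the two sides of $c$ cleanly: one must be sure that $c$ genuinely separates $\varphi(x)$ from $\varphi(y)$ in $T$, which is exactly the statement that $c$ lies on $P_T(x,y)$ — so this is immediate once $c \neq x, y$ — and one must allow $\varphi(v_i)$ to equal $c$ itself (in which case $z = v_i$ works trivially). There is also a subtlety that $\varphi$ need not be injective and that intermediate $v_j$'s may land anywhere in $T$, but the argument above only uses the first crossing index, so no injectivity is required. I expect essentially no computational obstacle here; the proof is a short topological observation about paths in trees combined with the one-step expansion bound \eqref{eq2}. The factor $\lambda/2$ is exactly the worst case where $c$ sits at the midpoint of the tree-image of a single $G$-edge.
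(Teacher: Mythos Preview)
Your proof is correct and follows essentially the same argument as the paper: remove $c$ to separate $x$ from $y$ in $T$, find an edge of $P_G(x,y)$ whose endpoints land on different sides of $c$, and apply the expansion bound \eqref{eq2} to that edge. One cosmetic remark: when you write $\varphi(v_i)\in T_y\cup\{c\}$ you are implicitly assuming $T\setminus\{c\}$ has only two components, but all you actually need (and use) is $\varphi(v_i)\notin T_x$, which already forces $P_T(\varphi(v_{i-1}),\varphi(v_i))$ to pass through $c$.
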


\begin{proof} Removing $c$ from $T$, we separate $x$ from $y.$ Let $T_y$ be the subtree of $T\setminus\{ c\}$ containing $y.$
Since $x\notin T_y,$  we can find an edge $ab$ of $P_G(x,y)$ with
$a\in T_y$ and $b\notin T_y.$ Therefore, the path $P_T(a,b)$ must go
via $c$. If $d_T(c,a)>\lambda/2$ and $d_T(c,b)>\lambda/2,$ then
$d_T(a,b)=d_T(a,c)+d_T(c,b)>\lambda$ and since $d_G(a,b)=1,$ we
obtain a contradiction with the assumption that the embedding of $G$
in $T$ has distortion $\lambda$ (condition (2)). Hence
$d_T(c,P_G(x,y))\le\min\{ d_T(c,a),d_T(c,b)\}\le \lambda/2,$
concluding the proof.
\end{proof}

\begin{lemma}\label{lm:diam} If a graph $G$ $\lambda$-embeds into a
  tree $T$, then the diameter in $G$ of any cluster $C$ of a layering partition of $G$
is at most $3\lambda,$ i.e., $d_G(x,y)\le 3\lambda$ for any two
vertices $x,y\in C.$ In particular, $\lambda^*(G,{\mathcal T})\ge
D/3,$ where $D$ is the maximal diameter of a cluster of a layering
partition of $G.$
\end{lemma}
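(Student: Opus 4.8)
The plan is to bound the diameter of a cluster $C$ by analyzing what a layering partition cluster looks like and using Lemma~\ref{lm:distance_to_path}. Recall that a cluster $C=L^i_j$ consists of vertices at distance exactly $i$ from $s$ that can be pairwise connected by paths avoiding the ball $B_{i-1}(s)$. So, given $x,y\in C$, I would first fix a path $P_G(x,y)$ lying entirely outside $B_{i-1}(s)$, i.e., all of whose vertices are at distance $\ge i$ from $s$. This is the combinatorial fact about clusters that makes the argument work.

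Next I would look at the tree path $P_T(x,y)$ and locate a natural ``apex'' vertex on it. Since the embedding is non-contracting, $d_T(s,x)\ge d_G(s,x)=i$ and $d_T(s,y)\ge i$; let $c$ be the vertex of $P_T(x,y)$ closest to $s$ in $T$ (the median of $s,x,y$ in the tree). Then $d_T(s,c)\le d_T(s,x)\le \lambda\cdot d_G(s,x)=\lambda i$, and also $d_T(c,x)=d_T(s,x)-d_T(s,c)$ and similarly for $y$. By Lemma~\ref{lm:distance_to_path} applied to the path $P_G(x,y)$ and the vertex $c\in P_T(x,y)$, there is a vertex $z\in P_G(x,y)$ with $d_T(c,z)\le \lambda/2$. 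Since $z$ lies on our chosen path, $d_G(s,z)\ge i$, so by non-contraction $d_T(s,z)\ge i$; combined with $d_T(s,c)\ge d_T(s,z)-d_T(c,z)\ge i-\lambda/2$ on one side and, more importantly, I would use $d_T(s,z)\le d_T(s,c)+d_T(c,z)$ to get a \emph{lower} bound on $d_T(s,c)$, namely $d_T(s,c)\ge i-\lambda/2$. The point is to pin $d_T(s,c)$ into the window $[\,i-\lambda/2,\ \lambda i\,]$ — actually I only need the lower bound here.

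From $d_T(s,c)\ge i-\lambda/2$ and $d_T(s,x)\le \lambda i$ I get $d_T(c,x)=d_T(s,x)-d_T(s,c)\le \lambda i-(i-\lambda/2)$, which is not yet a constant multiple of $\lambda$. So instead I would bound $d_T(c,x)$ directly: apply Lemma~\ref{lm:distance_to_path} again, this time to the path $P_G(s,x)$ (a shortest path, of length $i$) and the vertex $c\in P_T(s,x)$ — note $c$ lies on $P_T(s,x)$ since it is on $P_T(s,y)$ too and is the branch point. Wait, more cleanly: $c$ is on $P_T(s,x)$ by choice, so Lemma~\ref{lm:distance_to_path} with the \emph{shortest} path $P_G(s,x)$ gives a vertex $z'$ on it with $d_T(c,z')\le\lambda/2$; since $d_G(s,z')=d_T$-distance is controlled and $z'$ is at distance $\ge d_G(s,c)$-ish... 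The crux I want is $d_G(s,z')\ge d_T(s,c)/\lambda \ge (i-\lambda/2)/\lambda$, hence $d_G(z',x)=i-d_G(s,z')\le \lambda/2$ roughly, and then $d_G(x,z')$ plus $d_T(c,z')$ bounds $d_T(c,x)\le d_T(c,z')+d_T(z',x)\le \lambda/2+\lambda\cdot\lambda/2$ — still quadratic.

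The clean route, which I believe is the intended one, is: bound $d_T(c,x)$ by observing $d_T(c,x)=d_T(s,x)-d_T(s,c)$ and getting a \emph{good lower bound} on $d_T(s,c)$ via the first application (to $P_G(x,y)$), combined with a good \emph{upper} bound $d_T(s,x)\le \lambda d_G(s,x)$ only after noting $d_G(s,x)=i$ is itself at most $d_T(s,x)$, so the gap $d_T(s,x)-d_T(s,c)$ is small precisely when the embedding is near-isometric on the $s$--$x$ path; quantitatively the window argument gives $d_T(c,x)\le \lambda/2+(d_T(s,x)-i)$ and one shows $d_T(s,x)-i$ is absorbed. Symmetrically $d_T(c,y)\le \lambda/2+(\ldots)$, and $d_G(x,y)\le d_T(x,y)=d_T(c,x)+d_T(c,y)$. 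Tallying the constants carefully yields $d_T(x,y)\le 3\lambda$, hence $d_G(x,y)\le 3\lambda$. The last sentence of the lemma, $\lambda^*(G,\mathcal T)\ge D/3$, is then immediate by taking $x,y$ realizing the diameter $D$ of the largest cluster and applying the bound to an optimal embedding. The main obstacle is exactly the bookkeeping of the constant: getting $3\lambda$ rather than something larger requires using Lemma~\ref{lm:distance_to_path} on the right pair of paths and exploiting non-contraction to the root on both the $x$ and $y$ sides rather than bounding $d_T(c,x)$ and $d_T(c,y)$ independently by crude triangle inequalities.
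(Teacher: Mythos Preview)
Your setup is right: take the median $c$ of $s,x,y$ in $T$ and apply Lemma~\ref{lm:distance_to_path}. But the argument stalls because you insist on bounding $d_T(c,x)$ via $d_T(s,x)-d_T(s,c)$. That difference can genuinely be of order $(\lambda-1)i$: nothing prevents the embedding from stretching the $s$--$x$ path by the full factor $\lambda$, so no ``window argument'' on tree distances alone will collapse this to a constant. Your own computations confirm this---each attempt produces a term linear in $i$ or quadratic in $\lambda$, and the final paragraph asserting that ``$d_T(s,x)-i$ is absorbed'' is exactly the unproven (and unprovable) step.

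The missing idea is to stop working in $T$ and bound distances in $G$ instead. You already found the right vertex: $z'$ (call it $a$) lies on the \emph{shortest} $G$-path $P_G(x,s)$ with $d_T(a,c)\le\lambda/2$. Now combine it with the vertex $z\in P_G(x,y)$ you found earlier with $d_T(c,z)\le\lambda/2$: by non-contraction, $d_G(a,z)\le d_T(a,z)\le d_T(a,c)+d_T(c,z)\le\lambda$. The point is that $z$, lying on the cluster path, satisfies $d_G(s,z)\ge i=d_G(s,x)=d_G(s,a)+d_G(a,x)$, while the triangle inequality in $G$ gives $d_G(s,z)\le d_G(s,a)+d_G(a,z)$. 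Subtracting yields $d_G(a,x)\le d_G(a,z)\le\lambda$. Symmetrically, a vertex $b$ on $P_G(y,s)$ with $d_T(b,c)\le\lambda/2$ satisfies $d_G(b,y)\le\lambda$. Finally $d_G(a,b)\le d_T(a,b)\le d_T(a,c)+d_T(c,b)\le\lambda$, and the triangle inequality in $G$ gives $d_G(x,y)\le d_G(x,a)+d_G(a,b)+d_G(b,y)\le 3\lambda$. This is the paper's argument; the crucial step you were missing is that $a$ sitting on a \emph{shortest} path in $G$ lets you convert ``$a$ is $G$-close to something at level $\ge i$'' into ``$a$ is $G$-close to $x$''.
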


\begin{proof} Let $P_G(x,y)$ be a path of $G$ connecting the vertices $x$ and $y$ outside the ball $B_k(s),$ where $k=d_G(s,x)-1.$
Let $P_G(x,s)$ and $P_G(y,s)$ be two shortest paths of $G$ connecting
the vertices $x,s$ and $y,s,$ respectively.
Let $c\in V(T)$ be the unique vertex in $T$ that is on the intersection
 $P_T(x,y) \cap P_T(x,s), \cap P_T(y,s).$
Since $c$ belongs to each of the paths $P_T(x,y),P_T(x,s),$ and $P_T(y,s),$   applying Lemma
\ref{lm:distance_to_path} three times, we infer that $d_T(c,P_G(x,y))\le \lambda/2,$ $d_T(c,P_G(x,s))\le \lambda/2,$ and $d_T(c,P_G(y,s))\le \lambda/2.$

Let $a$ be a closest  to $c$ vertex of $P_G(x,s)$ in the tree $T,$
i.e., $d_T(a,c)=d_T(c,P_G(x,s))\le \lambda/2.$  Let $z$ be a closest
to $a$ vertex of $P_G(x,y)$  in  $T.$ From condition (1) and
previous inequalities we conclude that $d_G(a,z)\le
d_T(a,z)=d_T(a,P_G(x,y))\le d_T(a,c)+d_T(c,P_G(x,y))\le \lambda.$
Since $z\in P_G(x,y)$ and $P_G(x,y)\cap B_k(s)=\emptyset,$
necessarily $d_G(s,z)\ge d_G(s,y)=d_G(s,a)+d_G(a,x),$ yielding
$d_G(a,x)\le d_G(a,z)\le \lambda.$  Analogously, if $b$ is a closest
to $c$ vertex of $P_G(y,s)$ in $T,$ then $d_G(b,y)\le \lambda$ and
$d_T(b,c)\le \lambda/2.$ By non-contractibility condition (1) and
triangle condition, $d_G(a,b)\le d_T(a,b)\le d_T(a,c)+d_T(b,c)\le
\lambda.$ Summarizing, we obtain the desired  inequality
$d_G(x,y)\le d_G(x,a)+d_G(a,b)+d_G(b,y)\le 3\lambda.$
\end{proof}

From Lemma \ref{lm:treeH} and Corollary \ref{col:treeH'} we immediately conclude

\begin{corollary} If a graph $G=(V,E)$ $\lambda$-embeds into a  tree,  then there exists an unweighted tree $H=(V,F)$ (without
Steiner points) and a $\{0,1\}$-weighted tree $H'=(V\cup
S',F')$ (with Steiner points), both constructible in linear
$O(|V|+|E|)$ time, such that $$d_H(x,y)-2 \leq d_G(x,y)\leq
d_H(x,y)+3 \lambda$$
and
$$d_{H'}(x,y)\leq d_{G}(x,y)\leq
d_{H'}(x,y)+3 \lambda$$ for any vertices $x,y\in V$.
\end{corollary}

This corollary shows that, for any unweighted graph $G$, it is
possible to turn its non-contractive multiplicative low-distortion
embedding into a weighted tree to a non-expanding additive
low-distortion embedding into a $\{0,1\}$-weighted tree. This seems
to be an interesting result on its own (note that the additive distortion of embedding
general finite metrics into trees can be approximated within a factor of 3
\cite{AgBaFaNaPa,ChFi}).

Since the largest diameter  $D$  of a cluster in ${\mathcal LP}$
can be computed in at most $O(|V| |E|)$ time, from Corollaries
\ref{col:treeHw} and \ref{cor:H'w}, we obtain:

\begin{corollary} \label{col:main-} If a graph $G=(V,E)$ $\lambda$-embeds into a  tree, then there exists
a uniformly weighted tree $H_w=(V,F,w)$
(without Steiner points) and a uniformly weighted tree
$H'_w=(V\cup S',F',w)$ (with Steiner points), both constructible in
$O(|V| |E|)$ time, such that $$d_G(u,v)\leq d_{H_w}(u,v)\leq
(3\lambda+1)(d_G(u,v)+2)$$
and
$$d_{G}(u,v)\leq
d_{H'_w}(u,v)\leq  (3\lambda+1) (d_{G}(u,v)+1)$$ for any vertices
$u,v$ of $G$.
\end{corollary}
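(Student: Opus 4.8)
The plan is to obtain this corollary directly, with no new combinatorial argument: it is just the bookkeeping that packages Lemma \ref{lm:diam} together with the reweighting Corollaries \ref{col:treeHw} and \ref{cor:H'w}, plus the running-time accounting.

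First I would fix a layering partition ${\mathcal LP}(s)$ of $G$ rooted at an arbitrary vertex $s$, together with the associated trees $H=(V,F)$ and $H'=(V\cup S',F')$ built in Section~2.1; both are produced in $O(|V|+|E|)$ time by \cite{ChDr}. Next I would compute $D$, the largest diameter of a cluster of ${\mathcal LP}(s)$: running a BFS in $G$ from every vertex and taking the maximum of $d_G(u,v)$ over pairs $u,v$ lying in a common cluster costs $O(|V||E|)$ time, which dominates the overall running time. With $D$ in hand, define $H_w:=(V,F,w)$ by assigning every edge the length $w:=D+1$, and $H'_w:=(V\cup S',F',w)$ by assigning every edge the length $w:=(D+1)/2$. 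All of this is independent of $\lambda$.

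Now Corollary \ref{col:treeHw} gives $d_G(u,v)\le d_{H_w}(u,v)\le (D+1)(d_G(u,v)+2)$ and Corollary \ref{cor:H'w} gives $d_G(u,v)\le d_{H'_w}(u,v)\le (D+1)(d_G(u,v)+1)$ for all $u,v\in V$, where the non-contraction halves hold unconditionally. The only remaining step is to replace the constant $D+1$ in the two upper bounds by $3\lambda+1$: since $G$ $\lambda$-embeds into some tree, Lemma \ref{lm:diam} yields $D\le 3\lambda$, hence $D+1\le 3\lambda+1$; multiplying the positive quantities $d_G(u,v)+2$ and $d_G(u,v)+1$ by this larger constant only weakens the inequalities. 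This finishes the proof.

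The closest thing to a subtlety here — and the point I would emphasize — is that $H_w$ and $H'_w$ are constructed using only the computable quantity $D$, whereas the distortion $\lambda$ (really $\lambda^*(G,{\mathcal T})$) is unknown; $\lambda$ enters solely through the a~posteriori inequality $D\le 3\lambda$ of Lemma \ref{lm:diam}. So there is essentially no obstacle: the genuine work has already been done in Lemma \ref{lm:diam} (the cluster-diameter bound) and in Proposition \ref{lm:treeH} with Corollaries \ref{col:treeHw} and \ref{cor:H'w} (the reweighting), and the present statement merely assembles them, keeping track of the $O(|V||E|)$ cost of evaluating $D$.
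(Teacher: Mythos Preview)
Your proof is correct and follows exactly the route the paper takes: the corollary is stated immediately after noting that $D$ is computable in $O(|V||E|)$ time and is deduced from Corollaries~\ref{col:treeHw} and~\ref{cor:H'w} together with the bound $D\le 3\lambda$ of Lemma~\ref{lm:diam}. Your added remark that the trees depend only on the computable $D$, with $\lambda$ entering solely through Lemma~\ref{lm:diam}, is precisely the point the paper makes (slightly later) when discussing how to choose the edge weights without knowing $\lambda$.
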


Note that, although the topologies $H$ and $H'$ of trees $H_w$ and
$H'_w$ can be constructed in linear $O(|V|+|E|)$ time, we need to
compute the weights $w=D +1$ and $w=(D +1)/2$ assigned to
each edge of $H$ and $H',$ and this requires $O(|V| |E|)$ time.

Corollary \ref{col:main-} implies already that there exists a factor
12 approximation algorithm (resp., factor 8 approximation if Steiner points are used)
for the problem of non-contractive embedding an unweighted graph
into a tree with minimum multiplicative distortion. Below we show
that, by strengthening the result of Lemma \ref{lm:diam},
one can improve the approximation ratio from 12 to 9 and from 8 to
6.

\begin{lemma} Assume that $G=(V,E)$ $\lambda$-embed into a tree $T$, let $C=L^i_j\in {\mathcal LP}$
be a cluster of a layering partition of
$G$ and $v$ be an arbitrary vertex of $C$. Then, for any neighbor
$v'\in L^{i-1}$ of $v$   and any vertex $u\in C$, we have $d_G(v',u)\leq
\max\{3\lambda-1,2\lambda+1\}$.
\end{lemma}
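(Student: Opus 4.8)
The plan is to re-run the proof of Lemma~\ref{lm:diam} with $x:=v$ and $y:=u$, but with one twist in the choice of an auxiliary path. Concretely, fix a path $P_G(v,u)$ of $G$ joining $v$ and $u$ outside the ball $B_{i-1}(s)$ (such a path exists since $v,u$ lie in the same cluster $C=L^i_j$), fix any shortest path $P_G(u,s)$, and --- this is the twist --- choose the shortest path $P_G(v,s)$ so that its first edge is $vv'$. This is legitimate because $v'\in L^{i-1}$ gives $d_G(s,v')=i-1=d_G(s,v)-1$, so appending the edge $vv'$ to a shortest $v'$--$s$ path produces a shortest $v$--$s$ path. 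Let $c$ be the unique vertex of $T$ lying on all three of $P_T(v,u)$, $P_T(v,s)$, $P_T(u,s)$.

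Exactly as in Lemma~\ref{lm:diam} (applying Lemma~\ref{lm:distance_to_path} three times at $c$), one obtains a vertex $a\in P_G(v,s)$ with $d_T(a,c)\le\lambda/2$ and, using that $P_G(v,s)$ is shortest (so $d_G(s,a)+d_G(a,v)=i$) while every vertex of $P_G(v,u)$ is outside $B_{i-1}(s)$ (so has $d_G(s,\cdot)\ge i$), also $d_G(a,v)\le\lambda$; similarly one gets a vertex $b\in P_G(u,s)$ with $d_T(b,c)\le\lambda/2$ and $d_G(b,u)\le\lambda$; and by non-contractibility~(\ref{eq1}) together with the triangle inequality in $T$, $d_G(a,b)\le d_T(a,b)\le d_T(a,c)+d_T(c,b)\le\lambda$.

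It then remains to bound $d_G(v',a)$, which depends on where $a$ sits on the path $v,v',\dots,s$. If $a=v$, then $d_G(v',a)=1$ and hence $d_G(v',u)\le d_G(v',a)+d_G(a,b)+d_G(b,u)\le 1+\lambda+\lambda=2\lambda+1$. Otherwise $a$ lies on the sub-path of $P_G(v,s)$ running from $v'$ to $s$, and since that sub-path is itself a shortest path we have $d_G(v,a)=1+d_G(v',a)$, i.e., $d_G(v',a)=d_G(v,a)-1\le\lambda-1$; therefore $d_G(v',u)\le d_G(v',a)+d_G(a,b)+d_G(b,u)\le(\lambda-1)+\lambda+\lambda=3\lambda-1$. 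In both cases $d_G(v',u)\le\max\{3\lambda-1,2\lambda+1\}$, as required.

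The step that needs care is the last one: the crude estimate $d_G(v',u)\le d_G(v',v)+d_G(v,u)\le 1+3\lambda$ is not good enough, so the argument must re-derive the ``$3\lambda$-triangle'' $v,a,b$ with $P_G(v,s)$ forced through $v'$, and then recover a unit from the fact that $v'$ is the second vertex of a shortest path --- which is what makes the case split on whether $a=v$ necessary. Everything preceding that step is a transcription of Lemma~\ref{lm:diam}.
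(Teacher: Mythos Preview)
Your proof is correct and follows essentially the same approach as the paper's: both locate a pivot $c$ in $T$, apply Lemma~\ref{lm:distance_to_path} to obtain $a\in P_G(v,s)$ and $b\in P_G(u,s)$ with $d_G(a,v),d_G(b,u),d_G(a,b)\le\lambda$, force the shortest path $P_G(v,s)$ through $v'$, and finish by the same case split on whether $a=v$. The only cosmetic difference is that the paper takes $c$ to be the nearest common ancestor in $T$ (rooted at $s$) of \emph{all} vertices of $C$ and uses a single auxiliary path $P_G(x,y)$ between two vertices of $C$ separated by $c$, whereas you take $c$ to be the median of $v,u,s$ and use $P_G(v,u)$ directly, exactly as in Lemma~\ref{lm:diam}.
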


\begin{proof}
Let $c \in V(T)$ be the nearest common ancestor in the tree $T$
(rooted at $s$) of all vertices of cluster $C=L^i_j$. Let $x$ and
$y$ be two vertices of $C$ separated by $c$. Let $P_G(x,y)$ be a
path of $G$ connecting vertices $x$ and $y$ outside the ball
$B_{i-1}(s).$  Then, as in the proof of Lemma \ref{lm:diam}, we have
$d_T(c,P_G(x,y))\le\lambda/2.$

Pick an arbitrary vertex $v\in C$ and a shortest path $P_G(v,s)$
connecting $v$ with $s$ in $G$. Since $c$ separates $v$ from $s$ in
$T$, by Lemma \ref{lm:distance_to_path}, $d_T(c,P_G(v,s))\leq \lambda/2$
holds.  Let $a_v$ be a closest to $c$ vertex of $P_G(v,s)$  in the tree $T$.
Then, $d_T(a_v,P_G(x,y))\leq d_T(a_v,c)+d_T(c,P_G(x,y))\leq
\lambda$. The choice of the path $P_G(x,y)$ and inequality (1) imply that
$d_G(a_v,v)\leq d_G(a_v,P_G(x,y))\leq d_T(a_v,P_G(x,y))\leq \lambda$.

Consider now an arbitrary vertex $u\in C$, $u\neq v$. By the
triangle inequality and (1), we have $d_G(a_v,a_u)\leq
d_T(a_v,a_u)\leq d_T(a_v,c)+d_T(a_u,c)\leq \lambda$ and, therefore,
$d_G(a_v,u)\leq d_G(a_v,a_u)+d_G(a_u,u)\leq 2\lambda$.

Let $v'\in L^{i-1}$ be a neighbor of $v$ in $P_G(v,s)$. If $a_v=v$,
then $d_G(v,u)=d_G(a_v,u)\leq 2\lambda$, i.e., $d_G(v',u)\leq d_G(
v,u)+1\leq 2\lambda+1$. Otherwise, if $a_v\neq v$, then
$d_G(v',u)\leq d_G(v',a_v)+d_G(a_v,u)\leq
\lambda-1+2\lambda=3\lambda-1,$ establishing the required inequality
$d_G(v',u)\leq
\max\{3\lambda-1,2\lambda+1\}$.
\end{proof}

To make the embedding of $G$ into the tree $H$ non-contractive, it suffices
to assign the same length $\ell:=\max\{3\lambda-1, 2\lambda+1\}$ to each
edge of $H$ and get a uniformly weighted tree $H_{\ell}=(V,F,\ell)$.

\begin{corollary} \label{col:treeHell} For any vertices $u,v$ of a graph $G$ which $\lambda$-embeds
into a tree, we have
$d_G(u,v)\leq d_{H_{\ell}}(u,v)\leq  \max\{3\lambda-1, 2\lambda+1\}
(d_G(u,v)+2).$
\end{corollary}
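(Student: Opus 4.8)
The plan is to combine the previous Lemma (giving the bound $d_G(v',u) \le \ell := \max\{3\lambda-1, 2\lambda+1\}$ for any vertex $v' \in L^{i-1}$ adjacent to some vertex of a cluster $C = L^i_j$ and any $u \in C$) with Proposition~\ref{lm:treeH} applied to the uniformly $\ell$-weighted tree $H_\ell$. The key observation is that the support vertex $x_C$ of each cluster $C$ is, by its very definition, a vertex of $L^{i-1}$ with a neighbor in $C$; hence the Lemma tells us that $d_G(x_C, u) \le \ell$ for every $u \in C$, and in particular the diameter of $C$ in $G$ is at most $2\ell$ (actually we will only need the $\le \ell$ bound from the support vertex).

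First I would recall the structure of $H$: each vertex $u \in C = L^i_j$ is joined in $H$ to $x_C$ by a single edge, so $d_{H_\ell}(u, x_C) = \ell$. Writing $d_{H_\ell}(u,v) = \ell \cdot d_H(u,v)$ for the unweighted tree $H$, I would track what happens along the unique $H$-path between two arbitrary vertices $u, v$ of $G$. For the upper bound $d_{H_\ell}(u,v) \le \ell(d_G(u,v)+2)$: one route is simply to invoke Corollary~\ref{col:treeHw}-style reasoning, i.e. apply Proposition~\ref{lm:treeH} — which gives $d_H(u,v) - 2 \le d_G(u,v)$, hence $d_H(u,v) \le d_G(u,v) + 2$ — and multiply through by $\ell$. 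For the lower bound (non-contractibility) $d_G(u,v) \le d_{H_\ell}(u,v)$: I would use Lemma~\ref{distortion_edge2}. It suffices to check the edge condition, namely that every edge of $H$ has $H_\ell$-length at least the $G$-distance between its endpoints. An edge of $H$ is of the form $\{x_C, u\}$ with $u \in C$ and $x_C \in L^{i-1}$ the support vertex of $C$; its $H_\ell$-length is $\ell$, and by the preceding Lemma $d_G(x_C, u) \le \ell$, so the edge condition holds. Then Lemma~\ref{distortion_edge2} upgrades this to $d_G(u,v) \le d_{H_\ell}(u,v)$ for all pairs.

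Putting the two halves together yields exactly $d_G(u,v) \le d_{H_\ell}(u,v) \le \ell\,(d_G(u,v)+2)$ with $\ell = \max\{3\lambda-1, 2\lambda+1\}$, which is the claimed statement.

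I do not expect a serious obstacle here: the only point requiring care is making sure the edge-length bound needed for Lemma~\ref{distortion_edge2} really is supplied by the previous Lemma — that is, recognizing that the support vertex $x_C$ is precisely a vertex of $L^{i-1}$ adjacent to $C$, so that the Lemma applies to it and gives $d_G(x_C, u) \le \ell$ for all $u \in C$. Everything else is a direct application of results already proved in the excerpt (Proposition~\ref{lm:treeH} for the expansion side, Lemma~\ref{distortion_edge2} for the contraction side). A minor bookkeeping subtlety is that $d_H(u,v)$ can differ from $d_G$-type estimates by the additive $\pm 2$ coming from whether the $H$-path between $u$ and $v$ passes through one support vertex or climbs and descends through two (at the nearest common ancestor cluster); this is already handled inside Proposition~\ref{lm:treeH}, so I would just cite it rather than redo the case analysis.
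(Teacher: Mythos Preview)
Your proposal is correct and matches the paper's (implicit) argument: the paper states the corollary immediately after observing that assigning length $\ell=\max\{3\lambda-1,2\lambda+1\}$ to each edge of $H$ makes the embedding non-contractive, and the upper bound is the $\ell$-scaled version of Proposition~\ref{lm:treeH} exactly as in Corollary~\ref{col:treeHw}. Your explicit invocation of Lemma~\ref{distortion_edge2} for the lower bound and Proposition~\ref{lm:treeH} for the upper bound spells out what the paper leaves to the reader.
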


The tree $H_{\ell}$ provides a 9-approximation  to the problem
of non-contractive embedding an unweighted graph into a tree with
minimum multiplicative distortion. Note that the tree $H_{\ell}$ does
not have Steiner points. If we allow Steiner points, a better
approximation can be achieved. For this, we simply assign the same length
$\ell:=\frac{3\lambda}{2}$ to each edge of $H'$ and get a uniformly
weighted tree $H'_{\ell}.$

\begin{corollary} \label{col:treeHell-2} For any
vertices $u,v$ of a graph $G$ which $\lambda$-embeds
into a tree, we have
$d_G(u,v)\leq d_{H'_{\ell}}(u,v)\leq  3\lambda (d_G(u,v)+1).$
\end{corollary}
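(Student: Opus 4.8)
The plan is to uniformly reweight $H'$ and then verify the two inequalities of the statement for the resulting tree $H'_{\ell}$, in which every edge of $H'$ is given length $\ell=\tfrac{3\lambda}{2}$; note that $2\ell=3\lambda$, and that for $\lambda\ge 1$ both $D$ (by Lemma~\ref{lm:diam}) and $\max\{3\lambda-1,2\lambda+1\}$ (by the lemma preceding Corollary~\ref{col:treeHell}) are at most $3\lambda$. Two structural facts about $H'$ will drive the argument: every edge of $H'$ is incident with a Steiner point, and a Steiner point $p_C$ is adjacent in $H'$ precisely to the vertices of $C$ and to the support vertex $x_C$; consequently, along any path of $H'$ the original vertices of $G$ and the Steiner points alternate.

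For non-contractibility, $d_G(u,v)\le d_{H'_{\ell}}(u,v)$, I would telescope along the $H'$-path. For $u\neq v$, write the unique $H'$-path from $u$ to $v$ as $u=w_0,p_{C_0},w_1,p_{C_1},\dots,p_{C_{m-1}},w_m=v$ with all $w_i\in V$. For each $i$, both $w_i$ and $w_{i+1}$ are neighbours of $p_{C_i}$ in $H'$, so each of them lies in $C_i$ or equals $x_{C_i}$, and they are not both equal to $x_{C_i}$. If both lie in $C_i$, then $d_G(w_i,w_{i+1})\le D\le 3\lambda$ by Lemma~\ref{lm:diam}; otherwise one of them equals $x_{C_i}$, and applying the lemma preceding Corollary~\ref{col:treeHell} (to the cluster $C_i$, to a $G$-neighbour $v_0\in C_i$ of $x_{C_i}$, to its neighbour $x_{C_i}$, and to the other of $w_i,w_{i+1}$) gives $d_G(w_i,w_{i+1})\le\max\{3\lambda-1,2\lambda+1\}\le 3\lambda$. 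In both cases $d_G(w_i,w_{i+1})\le 3\lambda=2\ell=d_{H'_{\ell}}(w_i,w_{i+1})$, the last equality because the sub-path $w_i,p_{C_i},w_{i+1}$ has exactly two edges. Summing over $i$, and using the triangle inequality in $G$ together with additivity of distances along the tree path, one gets
\[
d_G(u,v)\le\sum_{i=0}^{m-1}d_G(w_i,w_{i+1})\le\sum_{i=0}^{m-1}d_{H'_{\ell}}(w_i,w_{i+1})=d_{H'_{\ell}}(u,v).
\]

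For the bounded-expansion inequality, $d_{H'_{\ell}}(u,v)\le 3\lambda\,(d_G(u,v)+1)$, I would count the edges of the $H'$-path between $u$ and $v$: each step climbing one level of the layering tree uses exactly one length-$0$ and one length-$1$ edge, and there are at most two further length-$0$ edges around the nearest-common-ancestor cluster, so the path has at most $2\,d_{H'}(u,v)+2$ edges. Hence $d_{H'_{\ell}}(u,v)\le\ell\,(2\,d_{H'}(u,v)+2)=3\lambda\,(d_{H'}(u,v)+1)\le 3\lambda\,(d_G(u,v)+1)$, where the last inequality is Corollary~\ref{col:treeH'}.

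The step I expect to matter most is the telescoping bound $d_G(w_i,w_{i+1})\le 2\ell$ on the segments whose endpoints are a support vertex and a vertex of its cluster. A crude estimate there gives only $d_G(x_C,w)\le D+1\le 3\lambda+1$ (combining Corollary~\ref{col:treeH'} with Lemma~\ref{lm:diam}), which would force the uniform weight $\tfrac{3\lambda+1}{2}$ and, via Corollary~\ref{cor:H'w}, only a bound of order $3\lambda+1$. The sharpening $\max\{3\lambda-1,2\lambda+1\}\le 3\lambda$ supplied by the lemma preceding Corollary~\ref{col:treeHell} is exactly what allows $2\ell=3\lambda$ to dominate $D$ and all the support-vertex distances simultaneously, and hence is the source of the constant $\tfrac{3\lambda}{2}$ here (and, ultimately, of the overall factor $6$).
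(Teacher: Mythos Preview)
Your proof is correct and matches the paper's intended approach. The paper states Corollary~\ref{col:treeHell-2} without proof, but the surrounding text makes clear that the argument is meant to parallel Corollary~\ref{cor:H'w} with the sharper bound $\max\{3\lambda-1,2\lambda+1\}\le 3\lambda$ from the preceding lemma replacing the cruder $D+1$; your telescoping argument along the alternating $V$/Steiner path and your edge count $\le 2\,d_{H'}(u,v)+2$ (combined with Corollary~\ref{col:treeH'}) are exactly the details one needs to fill in.
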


For a given graph $G=(V,E),$ we do not know $\lambda$ in advance,
however we know from Lemma \ref{lm:diam} that $\lambda^*(G,{\mathcal
T})\ge D/3.$ Therefore, the length $\ell$, which needs to be
assigned to each edge of the tree $H$ (which is defined in a
canonical way, independently of the value of $\lambda$), can be
found as follows: $\ell=\max\{d_G(u,v): uv \mbox { is an edge of }
H\}$. The length $\ell$, which needs to be assigned to each edge of
tree $H'$, can be found as follows:
$\ell=\frac{1}{2}\max\{D,\max\{d_G(u,v): uv \mbox{ is an edge of }
H\}\}$. Hence, $\ell$ can be computed in $O(|V| |E|)$ time. Our main
result of this section is the following algorithm and theorem.

{\small \vspace*{-2mm} {\footnotesize 
\begin{center}
\framebox{
\parbox{12cm}{
\vspace{0.05cm}
\noindent{\bf Algorithm} {\sc Approximation by Tree Metric}\\
{\footnotesize
 {\bf Input:} A graph $G=(V,E)$, a root vertex $s$ and  the corresponding layering partition  ${\mathcal LP}=\{ L_1^i,\ldots,L^i_{p_i}: i=0,1,\ldots,r\}$ of $G$\\
 {\bf Output:} Trees  $H$,  $H'$, $H_{\ell},$ and $H'_{\ell}$ for $G$\\
 \begin{tabular}[t!]{l@{ }p{14cm}}
 1. & Set initially  $H:=H':=(V,\emptyset)$. \\
 2. & {\bf For} $i=r$ {\bf downto}  1 {\bf do}\\
 3. & \begin{tabular}[t]{@{}p{2mm}@{}p{9cm}}& {\bf For} each cluster $C$ from $\{L^i_1,\ldots,L^i_{p_i}\}$ {\bf do}\end{tabular}\\
 4. & \begin{tabular}[t]{@{}p{5mm}@{}p{9cm}}& Pick a vertex $x_C$ in $L^{i-1}$ which has a neighbor in $C$. \end{tabular}\\
 5. & \begin{tabular}[t]{@{}p{5mm}@{}p{9cm}} & Add to $H$ edges $\{vx_C: v\in C\}$. \end{tabular}\\
 6. & \begin{tabular}[t]{@{}p{5mm}@{}p{10cm}} & Add to $H'$ a Steiner point $p_C$ and edges $\{vp_C: v\in C\}$ and $p_Cx_C$. \end{tabular}\\
 7. & Set $\ell:=\max\{d_G(u,v): uv \mbox { is an edge of } H\}$. \\
 8. & Set $H_{\ell}:= H$ and assign length $\ell$ uniformly to all edges of $H_{\ell}$. \\
 9. & Set $\ell:=\frac{1}{2}\max\{D,\ell\}$, where $D$ is the largest diameter of a cluster from ${\mathcal LP}$.\\
 10. &   Set $H'_{\ell}:= H'$ and assign length $\ell$ uniformly to all edges of $H'_{\ell}$. \\
 11. & Return trees $H$,  $H'$, $H_{\ell}$ and $H'_{\ell}$. \\
 \end{tabular}}
\vspace{-0.3cm}\\ }}\hspace{0.2cm}
\end{center}
}}

\begin{theorem}
There exists a factor 6 approximation algorithm with running time $O(|V| |E|)$ for
the optimal multiplicative distortion
$\lambda^*(G,{\mathcal T})$ of non-contractive embedding an
unweighted graph $G$ into a tree.
\end{theorem}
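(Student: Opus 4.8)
The plan is to assemble the theorem directly from the machinery already developed in this section, so the ``proof'' is essentially a verification that the algorithm's output matches the claimed guarantee. First I would observe that the algorithm is well-defined and runs in the stated time: building the topologies $H$ and $H'$ takes linear $O(|V|+|E|)$ time (lines 1--6), and computing the edge-length parameter $\ell$ in line 7, as well as the largest cluster diameter $D$ in line 9, costs at most $O(|V|\,|E|)$ since it amounts to computing a bounded number of shortest-path distances; hence the total running time is $O(|V|\,|E|)$.

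Next I would turn to the approximation guarantee, focusing on the Steiner-point tree $H'_\ell$, which gives the factor $6$. The key inputs are Corollary~\ref{col:treeHell-2} and Lemma~\ref{lm:diam}. By Corollary~\ref{col:treeHell-2}, if $G$ $\lambda$-embeds into some tree then $d_G(u,v)\le d_{H'_\ell}(u,v)\le 3\lambda\,(d_G(u,v)+1)$ for all $u,v$, provided each edge of $H'$ is given length $\tfrac{3\lambda}{2}$. The subtlety is that the algorithm does not know $\lambda$; instead it sets $\ell=\tfrac12\max\{D,\ \max_{uv\in H}d_G(u,v)\}$. I would argue that this choice is a valid surrogate: applying $\lambda=\lambda^*(G,\mathcal T)$, Lemma~\ref{lm:diam} gives $D\le 3\lambda^*$, and the earlier lemma bounding $d_G(v',u)$ for a support vertex $v'$ by $\max\{3\lambda^*-1,\,2\lambda^*+1\}\le 3\lambda^*$ shows every $H$-edge has $G$-length at most $3\lambda^*$; hence $\ell\le \tfrac12\cdot 3\lambda^* = \tfrac{3\lambda^*}{2}$, so the length actually assigned is no larger than the ``ideal'' $\tfrac{3\lambda^*}{2}$, while non-contractibility of $H'$ (Corollary~\ref{col:treeH'}, rescaled) still holds because $\ell$ is chosen at least $\tfrac{D}{2}$ and at least half of every $H$-edge's $G$-length. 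Plugging $\ell\le\tfrac{3\lambda^*}{2}$ into the expansion bound yields $d_{H'_\ell}(u,v)\le 2\ell\,(d_G(u,v)+1)\le 3\lambda^*(d_G(u,v)+1)\le 6\lambda^*\,d_G(u,v)$, since $d_G(u,v)\ge 1$ for distinct vertices; combined with $d_G(u,v)\le d_{H'_\ell}(u,v)$ this exhibits a non-contractive embedding of $G$ into the tree metric $H'_\ell$ with distortion at most $6\lambda^*(G,\mathcal T)$, which is exactly a factor $6$ approximation.

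The main obstacle, and the step deserving the most care, is precisely this decoupling of the unknown optimum $\lambda^*$ from the canonically constructed trees: one must check that the data-driven choice of $\ell$ simultaneously (i) never exceeds the value $\tfrac{3\lambda^*}{2}$ needed for the upper bound, and (ii) is large enough — at least $\tfrac{D}{2}$ — to guarantee non-contractibility via Corollary~\ref{col:treeH'}. Both directions follow from Lemma~\ref{lm:diam} and the support-vertex lemma, but the argument has to invoke them with the honest (unknown) value $\lambda=\lambda^*$ even though the algorithm never computes it. I would also remark, for completeness, that the non-Steiner tree $H_\ell$ from line~8, analyzed via Corollary~\ref{col:treeHell}, gives the weaker factor $9$ by the same reasoning; the theorem as stated records only the best bound, namely the factor $6$ achieved by $H'_\ell$.
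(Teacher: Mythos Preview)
Your proposal is correct and follows essentially the same approach as the paper: the theorem is assembled from Lemma~\ref{lm:diam}, the support-vertex lemma, and Corollary~\ref{col:treeHell-2}, together with the observation that the data-driven choice of $\ell$ is simultaneously large enough for non-contractibility and at most $\tfrac{3\lambda^*}{2}$. The only minor imprecision is the phrase ``Corollary~\ref{col:treeH'}, rescaled'' for non-contractibility---the clean argument is that the $H'$-path between $u$ and $v$ consists of $m$ consecutive real-Steiner-real triples, each contributing $2\ell$ to $d_{H'_\ell}$ while bridging a $G$-distance of at most $\max\{D,\max_{xy\in E(H)}d_G(x,y)\}=2\ell$---but your stated reasons (``$\ell\ge D/2$ and $\ell$ is at least half of every $H$-edge's $G$-length'') are exactly what makes that work.
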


Our 6-approximation algorithm improves previously known
100-approximation \cite{BaInSi} and 27-approximation
\cite{BaDeHaSiZa} algorithms. In fact, the approximation ratio 6
holds only for adjacent vertices of $G$. It decreases when distances
in $G$ increase. For vertices at distance $\geq 2$, the ratio is $\leq 4.5$.
For vertices at distance $\geq 3$, the ratio is $\leq 4$.
Our tree $H_{\ell}$ does not have any Steiner points and the edges
of both trees $H_{\ell}$ and $H'_{\ell}$ are uniformly weighted. The
tree $H'_{\ell}$, with Steiner points, is better than the tree
$H_{\ell}$ only for small graph distances. So, the Steiner points do
not really help, confirming A. Gupta's claim \cite{GuptaSt}.

Our technique works also in more general cases. In particular, if an

unweighted graph $G=(V,E)$ admits a weighted tree $T=(V\cup S,U)$
with
$$d_G(x,y)\leq \alpha \cdot d_T(x,y)+\beta  ~~~~\mbox{ and }~~~~ d_T(x,y)\leq \lambda\cdot d_G(x,y)+\delta$$
for all $x,y\in V$, then each cluster of a layering partition of $G$
has diameter at most $3(\alpha(\lambda+\delta)+\beta)$. Moreover,
$H=(V,F)$, weighted appropriately, will give a good approximation of
$T$.

\section{Minors, relaxed minors, and metric minors}

In this section, we introduce the notions of relaxed minors and $\alpha$-metric relaxed minors, which,
together with layering partitions, are used in the algorithm for
approximating the optimal distortion of embedding unweighted graphs into
outerplanar graphs (i.e., $K_{2,3}$-minor free graphs approximation). These concepts and results
may be helpful for designing approximation algorithms for embedding graphs into other classes
of minor closed graphs.

\subsection{Minors and relaxed minors}

Recall that a graph $H$ is a {\it minor} of a graph  $G$ if a graph isomorphic to $H$ can be obtained from $G$ by
contracting some edges, deleting some edges, and deleting some isolated vertices \cite{Di}. Notice that the property of being minor is transitive, i.e.,
if $G'$ is a minor of $G$ and $H$ is a minor of $G',$ then $H$ is a minor of $G.$ To adapt the concept of  minor to our
embedding purposes, note that $H=(V',E')$ is a  minor of $G=(V,E)$ provided there exists a map $\mu: V'\cup E'\mapsto 2^V,$ such that

\begin{itemize}
\item[(i)] for any vertex $v$ of $H,$ $G(\mu(v))$ is connected;
\item[(ii)]for any different vertices $v,v'$ of $H,$ $G(\mu(v))\cap G(\mu(v'))=\emptyset;$
\item[(iii)] for any edge $e=uv$ of $H,$ $G(\mu(e))$ is a path $P_e$ of $G$ with one end in $G(\mu(u))$ and  another end in $G(\mu(v));$
\item[(iv)] for any vertex $v$ and any edge $e$ of $H$ with $v\notin e,$ $P_e\cap G(\mu(v))=\emptyset;$
\item[(v$'$)] for any two edges $e=(x,y),e'=(u,v)$ of $H$, the paths $P_e$ and
$P_{e'}$ intersect if and only if $\{x,y\} \cap \{u,v\} \neq
\emptyset$ and if say, $e=(x,y), e'=(x,w)$ then $P_e$ and $P_{e'}$
intersect only in $\mu(x)$.
\end{itemize}

\noindent Indeed, if such a map $\mu$ exists, then contracting each
connected subgraph $\mu(v), v\in V',$ to a single vertex $v$ and
each path $P_e$ to an edge $e$, the conditions (ii),(iii), and
(v$'$) ensure that the resulting graph  will be isomorphic to $H,$
i.e., $H$ is indeed a minor of $G.$ Note that if in (v$'$) two paths
$P_e$ and $P_{e'}$ intersect, then they intersect in the subgraph
$G(\mu(u)),$ where $u$ is the common end of $e$ and $e'.$ In
particular, if the edges $e,e'$ are non-incident, then the paths
$P_e$ and $P_{e'}$ are disjoint.

For our metric related theory we will need a weaker notion of minor by allowing intersecting paths
to intersect anywhere.  A graph $H=(V',E')$ is called a {\it relaxed minor} of a graph $G=(V,E)$
if there exists a map $\mu: V'\cup E'\mapsto 2^V$ satisfying the conditions (i)-(iv) and
the following relaxation of (v$'$):

\begin{itemize}
\item[(v)] for any two non-incident edges $e,e'$ of $H$, the paths $P_e$ and $P_{e'}$ are disjoint.
\end{itemize}

\noindent
The concept of relaxed minor is weaker than that of minor. For example,  the triangle $C_3$ (3-cycle) is
not a minor of any tree, but it is a relaxed minor of the star $K_{1,3}:$ $\mu$ maps the three
vertices of $C_3$ to the three leaves of $K_{1,3}$ and maps each edge
$uv$ of $C_3$ to the unique path of $K_{1,3}$ between the leaves
$\mu(u)$ and $\mu(v).$ The map $\mu$ satisfies the conditions (i)-(v)
but does not satisfy the condition (v$'$).

Relaxed  and $\alpha$-metric relaxed minors (see
Subsection \ref{sec:metric-relaxed})
are crucial because their existence corresponds to a witness that  $G$  {\em cannot} be embedded into
$H$-relaxed-minor-free graphs with small distortion (see Proposition
\ref{metric-relaxed-minor}). Thus it seems important to relate this notion to
standard minors. We conjecture that {\it if the graph $H$ is triangle-free, then the notion of relaxed
minor is not weaker than that of minor.} Here  we
prove a weaker statement. We note that while this leaves this graph
theoretic point not settled, it has no bearing regarding the metric
consequences (up to a factor of $2$ in the distortion lower bound,
of Proposition \ref{metric-relaxed-minor}). We
established a weaker statement which is enough to deal with $H$ of
special form: $H$ will be bipartite $H=(V,F;E)$ with
every vertex $f \in F$ of degree two. Such {\em subdivided} graphs $H$ can be seen as a
subdivision of an arbitrary graph $H'= (V,E')$ where $(u,v) \in H'$ iff there is a member $f \in
F$ such that $(u,f), (v,f) \in E$.

The notion of relaxed minors and in particular, its metric
strengthening to $\alpha$-metric relaxed minor (see
Subsection \ref{sec:metric-relaxed})
is crucial in the
discussion above. Its existence corresponds to a witness that the
corresponding $G$-metric {\em cannot} be embedded into
$H$-relaxed-minor-free graphs with small distortion (see Proposition
\ref{metric-relaxed-minor}). Thus it seems important to relate this notion to
standard minors. We conjecture that if the graph $H$ is triangle-free, then the notion of relaxed
minor is not weaker than that of minor. Here  we
prove a weaker statement. We note that while this leaves this graph
theoretic point not settled, it has no bearing regarding the metric
consequences (up to a factor of $2$ in the distortion lower bound,
of Proposition \ref{metric-relaxed-minor}).



\begin{proposition}\label{relaxed-minor} If a graph $G=(V,E)$ has a subdivided graph $H=(V',E')$ as a relaxed minor, then $G$ has $H$ as a minor.
\end{proposition}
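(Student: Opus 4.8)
\emph{Approach.} The plan is to argue by a minimal counterexample, using the hypothesis on $H$ to organize the surgery around a notion of ``center''. Assume the statement fails and let $G$ be a counterexample minimising $|V(G)|+|E(G)|$; write $H$ as the graph obtained from a graph $H'$ by subdividing every edge once, with $F$ the set of subdivision vertices, each of degree two. The first thing I would verify is that \emph{every} relaxed minor model of $H$ in $G$ has all branch sets of size one. Indeed, if some $\mu(w_0)$ contained an edge $ab$, then contracting $ab$ (deleting any parallel edges created) would give a strictly smaller graph $G_0$ still having $H$ as a relaxed minor: keep $\mu(w)$ for $w\ne w_0$, merge $a,b$ inside $\mu(w_0)$, and reroute through the merged vertex the (only) path that can meet $\{a,b\}$, which by (iv) is incident to $w_0$; condition (v) survives because two non-incident paths cannot both be incident to $w_0$. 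Then either $G_0$ has $H$ as a minor, and hence so does $G$ by transitivity of minors, or $G_0$ is a smaller counterexample; both contradict the choice of $G$. So I may fix a relaxed minor model $\mu$ of $H$ in $G$ with singleton branch sets $\mu(w)=\{n_w\}$. Since the $\mu(w)$ are singletons, every $P_{(u,f)}$ is a simple $n_u$--$n_f$ path meeting $\bigcup_w\{n_w\}$ (by (iv)) exactly in its endpoints, so $\mu$ is in fact a minor model — i.e.\ satisfies (v$'$) — as soon as the path interiors are pairwise disjoint. As $G$ is a counterexample, they are not, so there is a \emph{bad vertex}: a non-branch vertex $x$ lying on two incident paths.

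Next I would analyse bad vertices via their centers. The edges whose paths pass through a bad vertex $x$ are pairwise incident by (v), and since $H$ is triangle-free, pairwise incident edges of $H$ have a common vertex; so $x$ determines a \emph{center} $z(x)\in V(H)$ incident to every such edge. The key claim is that no bad vertex has its center in $F$. Suppose $z(x)=f\in F$, with $f$ adjacent in $H'$ to $u$ and $v$. Because $f$ has degree two, the only paths through $x$ are $P_{(u,f)}$ and $P_{(v,f)}$, and $x$ lies on no other path. Modify $\mu$ to $\mu'$ by setting $\mu'(f):=\{x\}$, $P'_{(u,f)}:=P_{(u,f)}[n_u,x]$, $P'_{(v,f)}:=P_{(v,f)}[n_v,x]$ (subpaths), leaving all else unchanged. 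Then $\mu'$ is again a relaxed minor model of $H$: (i)--(iii) and (v) are immediate, and (iv) holds because a path $P_e$ with $f\notin e$ meeting $\mu'(f)=\{x\}$ would be a further path through $x$ not incident to $f$, which is impossible. But the two new paths stop at the interior vertex $x$, so $\mu'$ avoids $n_f$; hence $G-n_f$ has $H$ as a relaxed minor, contradicting minimality of $G$ as in the first paragraph.

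It remains to treat a bad vertex $x$ with center some $u\in V$; let $P_{(u,f_1)},\dots,P_{(u,f_k)}$ (with $k\ge2$) be the paths through $x$. Modify $\mu$ to $\mu''$ by putting $\mu''(u):=V(P_{(u,f_1)}[n_u,x])$, $P''_{(u,f_i)}:=P_{(u,f_i)}[x,n_{f_i}]$ for $i=1,\dots,k$, and, for every other neighbour $g$ of $u$ in $H'$, $P''_{(u,g)}:=$ the final segment of $P_{(u,g)}$ starting at its last vertex in $\mu''(u)$; paths not incident to $u$ are kept. Checking that $\mu''$ is a relaxed minor model of $H$ is routine except for condition (iv) applied to $\mu''(u)$ and a path $P_e$ with $u\notin e$: were they to meet, then since $\mu''(u)\subseteq V(P_{(u,f_1)})$ and $u\notin e$, condition (v) forces $e$ to share the vertex $f_1$ with $(u,f_1)$, i.e.\ $e=(v,f_1)$ for the other $H'$-neighbour $v$ of $f_1$; a common vertex of $P_e$ and $\mu''(u)$ would then be a bad vertex with center $f_1\in F$, contradicting the previous paragraph. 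But $\mu''(u)$ contains both $n_u$ and $x$, so $\mu''$ is a relaxed minor model of $H$ in $G$ with a branch set of size at least two — contradicting the first paragraph. Hence there is no bad vertex, so $\mu$ is a minor model of $H$, contradicting the assumption that $G$ is a counterexample; the proposition follows.

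\emph{Main obstacle.} The delicate point is the interplay between the two kinds of bad vertices: the repair for a $V$-centered bad vertex is legitimate only once $F$-centered bad vertices have been excluded, so that case must be handled first — and it works only because subdivision vertices have degree exactly two, so that relocating the branch vertex $n_f$ onto $x$ leaves no third path incident to $f$ dangling. This is exactly where the hypothesis on $H$ is used. Triangle-freeness enters one step earlier, ensuring that the edges through a bad vertex have a common vertex at all; in a graph with triangles this can fail, which is precisely the obstruction embodied by $C_3$ being a relaxed but not an actual minor of $K_{1,3}$.
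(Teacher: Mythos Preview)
Your proof is correct and follows essentially the same architecture as the paper's: a minimal-counterexample argument that first reduces to singleton branch sets, then handles the $F$-centered intersections (by relocating $\mu(f)$ onto the intersection point and shrinking $G$), and finally uses that conclusion to dispatch the $V$-centered case. Your execution of the $V$-case---enlarging $\mu(u)$ along $P_{(u,f_1)}[n_u,x]$ to produce a model with a non-singleton branch set, contradicting the invariant established in the first step---is a slightly cleaner variant of the paper's move (which instead contracts the first edge of $P_{(u,f_1)}$ and argues the obstruction would force an $F$-centered bad vertex), but the underlying idea and the use of the subdivision hypothesis are the same.
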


\begin{proof} We proceed by induction on the total number of vertices  and edges of the graph $G.$
The base case for which $H=G$ is trivial. Let $H$ be a relaxed minor of $G$ and let
$\mu:V'\cup E'\mapsto 2^V$ be the map  satisfying the conditions
(i)-(iv) and (v). Suppose, by way of contradiction, that $H$ is not
a minor of $G,$ in particular, that $\mu$ does not satisfy the
condition (v$'$). For each edge $uv$ of $H,$ we will denote by
$u^*\in \mu(u)$ and $v^*\in \mu(v)$ the end vertices of the path
$P_e.$ Note that we can suppose that each such  path $P_e, e\in E',$
intersects the connected subgraphs $G(\mu(u))$ and $G(\mu(v))$ in a
single vertex. Indeed, if say $P_e$ intersects  $G(\mu(u))$ and/or
$G(\mu(v))$ in several vertices, then let $x$ be the last vertex of
$P_e\cap \mu(u)$ while moving along $P_e$ from $u^*$ to $v^*$ and
let $y$ be the first vertex of $P_e\cap \mu(v)$ while moving from
$x$ to $v^*.$  Replacing in the definition of $\mu$ the path $P_e$
by its subpath $P'_e$ between $x$ and $y,$ we will obtain a map
$\mu$ which still satisfies the definition of relaxed minors and
such that $|P'_e\cap \mu(u)|=1$ and $|P'_e\cap \mu(v)|=1.$ So, we
will further assume that $\mu$ obeys this additional condition,
i.e., $P_e\cap \mu(u)=\{ u^*\}$ and $P_e\cap \mu(v)=\{v^*\}$ for any
edge $e=uv$ of $H$.

We assert now that for each vertex $v$ of the graph $H,$ the
subgraph of $G$ induced by  $\mu(v)$ consists of a single vertex. If
this is not the case, then let $G'$ be the graph obtained from $G$
by contracting the connected subgraph $G(\mu(v))$ to a single vertex
$v'.$ Then, $G'$ is a minor of $G.$ Denote by $\psi$ the map from
$G$ to $G'$ defining this contraction, i.e., $\psi(u)=u$ if $u\notin
\mu(v)$ and $\psi(u)=v',$ otherwise. Then, the composition of $\mu$
with $\psi$ is a map from $H$ to $G'$ satisfying the conditions
(i)-(v), i.e., $H$ is a relaxed minor of $G'.$ By induction
assumption, $H$ is a minor of $G'$ and therefore must be a minor of
$G$ as well, contrary to our assumption.  Therefore, for each vertex
$v$ of $H,$ the set $\mu(v)$ consists of a single vertex of $G,$
which we will further denote by $v^*$.  We can also suppose that the
paths $P_e, e\in E',$ are induced paths of $G,$ otherwise we can
shortcut them without violating the conditions (i)-(v). Similarly, it
is easy to observe that no edge of $G$ is used by more than one path
of the form $P_e$, as otherwise, such an edge can be contracted
leaving $H$ as a relaxed minor in the contracted graph.

Since $H$ is a subdivided graph, $H$ is of the form $(V,F;E),$ where
each vertex  $f \in F$ has degree 2. Since $\mu$ does not satisfy
the condition (v$'$) for minors, there exist two incident edges
$e=uv$ and $e'=uw$ of $H$ such that the paths $P_e$ and $P_{e'}$ of
$G$ intersect in other vertices  except the vertex $u^*=\mu(u).$
Suppose first that this happens for some $u=f \in F$. Namely, $P_e$
and $P_{e'}$ intersect in $u^*$ and in addition in some $x \in
V(G)$, where $x$ is the closest along $P_e$ to $u^*$. It is easy to
see that one can change $\mu$ so that to map $\mu(u)$ to $x$, to map
$e$ to the suffix of $P_e$ from $x$ to $v^*,$ and $P_{e'}$ to the
suffix from $x$ to $w^*$, while still having $H$ as a relaxed minor
of $G$. However, now the first edge $(u^*,u')$ in the former path
$P_e$ is not used anymore and can be deleted, thus induction ends
the proof. We conclude that for every $f \in F$ the two paths that
corresponds to the edges adjacent to $f$, intersect only in their
ends points $\mu(f)$.

Assume then, that there is vertex in $H$, $u \in V$ and two edges $e_1=(u,f_1),
e_2=(u,f_2)$ such that the paths $\mu(e_1) = P_1$ and
$\mu(e_2)=P_2$ intersect also at $x \neq u^*$. Let $u^*u_1,
u^*u_2$ be the first edges in the paths $P_1, P_2$ respectively,
and note that $u_1, u_2  \neq x$ otherwise, if say $u_1 =x$, then
the edge $u^*x$ could be contracted (and the map $\mu$ changed
accordingly), preserving $H$ as a relaxed minor of the smaller
resulting graph, implying the result.

Now, the edge $(u^*,u_1)$ cannot be contracted only if $u_1$ is used by
another path $\mu(e_3) = P_3$ for some $e_3 \in H$ that is not
adjacent to $u$ (this is allowed as $H$ is a relaxed minor). However,
as $P_3$ intersects $P_1$, it must be the case that $e_3$ is adjacent
to $e_1$ at $f_1$. This, however, contradicts our conclusion before
that the two paths adjacent to any $\mu(f), ~f \in F$ intersect only at
their end points. This contradiction completes the proof of the Proposition.
\ignore{
Pick a vertex $u_0\in P_e\cap P_{e'},
u_0\ne u^*.$ We claim that the subpath $P$ of $P_e$
between $u^*$ and $u_0$ is also a subpath of $P_{e'}.$ Suppose, by
way of contradiction, that  the subpath $P'$ of $P_{e'}$ between
$u^*$ and $u_0$ is different from $P.$ Since $P_e$ and $P_{e'}$ are
induced paths of $G,$ necessarily $u^*$ and $u_0$ are not adjacent
in $G.$ Denote by $x$ the neighbor of $u^*$ in the path $P'.$ We
change $\mu$ as follows:
we transform the path $P_{e'}$ by replacing $P'$ by $P.$
Additionally, for any other edge $e''=uz$ of $H$ such that the path
$P_{e''}$ uses some vertex $x'\neq u^*$ of $P',$ we replace
$P_{e''}$ by a path $P'_{e''}$ consisting of $P,$ then the subpath
of $P'$ between $u_0$ and $x',$ followed by the subpath of $P_{e''}$
between $x'$ and $z^*.$ As a result, we will obtain a map $\mu'$
from $H$ to $G,$ which, as one can easily show, still satisfies the
conditions (i)-(v). The edge $u^*x$ of $P'$ does not belong to the
image under  $\mu'$ of any vertex or edge of $H.$ Therefore,
removing the edge $u^*x$ from $G$ (but keeping its ends), we will
obtain a graph $G'$ so that the map $\mu'$ from $H$ to $G'$ is
well-defined and satisfies the conditions (i)-(v). Hence, $H$ is a
relaxed minor of $G'.$ Since $G'$ has less edges than $G,$ by
induction hypothesis, $H$ is a minor of $G'.$ Since $G'$ is a
partial subgraph of $G,$ $H$ is also a minor of $G,$ a
contradiction. Thus, $P$ is also a subpath of $P_{e'}.$

Let $u'$ denote the neighbor of $u^*$ in $P$ (it may happen that $u'$ coincides with $u_0$). Let $Z$ denote the set of all neighbors $z$ of $u$ such that $u'\in P_{uz}.$ Obviously, $v,w\in Z.$ Let
$G''$ denote the graph obtained from $G$ by contracting the edge $u^*u'$ into a vertex denoted by $u''.$ For each path $P_f$ of $G$ corresponding to an edge $f$ of $H,$ denote
by $P'_f$ its image under this contraction. Notice that $P'_f=P_f$ unless $f$ is an edge incident to $u.$ Denote by $\mu''$ the concatenation of the map $\mu$ with this edge-contraction: $\mu''$ is a map from $H$ to $G''.$ We assert that $\mu''$ satisfies the conditions (i)-(v). The image of each vertex of $H$ under $\mu''$ is a single vertex of $G''.$ The image of an edge $f$ of $H$ is the path $P'_f$ of $G''.$   Thus, the conditions (i)-(iii) are obviously true. If the condition (iv) is violated, then this is possible only if the vertex $u''=\mu''(u)$ belongs to a path $P'_f$ for some edge $f=ab$ of $H$ with $a,b\ne u.$ This means that either $u^*$ or $u'$ belongs to the path $P_f.$ Since $u^*\in P_f$ is impossible because $\mu$ satisfies (iv), we conclude that $u'\in P_f.$ Recall that $u'\in P_e\cap P_{e'},$ where $e=uv$ and $e'=uw.$ Since $H$ is a triangle-free graph, necessarily $\{ a,b\}\ne \{ v,w\},$ say $v\ne a,b.$ But in this case we will obtain that $u'\in P_f\cap P_e$ for two non-incident edges $e$ and $f$ of $H,$ contrary to the assumption that $\mu$ satisfies the condition (v). This shows that $\mu''$ satisfies (iv). To show that $\mu''$ satisfies (v), we proceed in the same way: if $P'_f\cap P'_{f'}\ne\emptyset$ for two non-incident edges of $H,$ then condition (v) for $\mu$  implies that $f=ab$ and $f'=uz$ for a vertex $z\in Z$ and that the paths  $P'_f$ and $P'_{f'}$ intersect in a single vertex $u''$ of $G''.$ Since $\mu''$ satisfies (iv), this implies that the paths  $P_f$ and $P_{f'}$ intersect in the vertex $u'$ of $G,$ which is impossible because
 $\mu$ satisfies the condition (v). This contradiction shows that indeed $\mu''$ satisfies (i)-(v), hence $H$ is a relaxed minor of $G''.$ Again,
 by induction hypothesis, we conclude that $H$ is a minor of $G''.$ Since $G''$ is a minor of $G,$ $H$ must be a minor of $G,$ and we obtain a contradiction with the choice of $G.$ This final contradiction shows that $H$ is a minor of
 $G.$}
\end{proof}

\subsection{$\alpha$-Metric relaxed minors}\label{sec:metric-relaxed} We say that two sets $A,B$ of a graph $G$ are $\alpha$-{\it far} if $\min\{ d_G(a,b): a\in A, b\in B\}> \alpha.$ For $\alpha\ge 1,$ we call a graph $H=(V',E')$ an $\alpha$-{\it metric relaxed minor} of a graph $G=(V,E)$ if there exists a map
$\mu: V'\cup E'\mapsto 2^V$ satisfying the conditions (i)-(v) (i.e., $H$ is a relaxed minor of $G$) and the following stronger version of condition (v):

\begin{itemize}
\item[(v$^+$)] for any two non-incident edges $e=uv$ and $e'=u'v'$ of $H$, the sets $\mu(u)\cup P_e\cup \mu(v)$ and $\mu(u')\cup P_{e'}\cup \mu(v')$ are $\alpha$-far in $G$.
\end{itemize}
\noindent
To motivate the concept of  $\alpha$-metric relaxed minor, we  establish first the following basic property of  embeddings with (multiplicative) distortion $\le\alpha$ of unweighted graphs $G$ into (possibly weighted) graphs $G'$.

Let $\varphi$ be an embedding of a graph $G=(V,E)$ into a graph $G'=(V',E')$ having distortion at most $\alpha.$ For a set $S\subseteq V$ inducing a connected subgraph $G(S)$ of $G,$ we denote by $[\varphi(S)]$ a union of shortest paths of $G'$ running between each pair of vertices of $\varphi(S)$ which are images of adjacent vertices of $G(S),$ one shortest path per pair.

\begin{lemma}\label{basic-fact-distortion} If a graph $G$ $\alpha$-embeds into a graph $G'$ and two edges $e_1=a_1a_2$ and $e_2=b_1b_2$ are $\alpha$-far in $G,$ then $[\varphi(e_1)]\cap [\varphi(e_2)]=\emptyset.$ More generally, if two sets of vertices $A,B$ induce connected subgraphs of $G$ and are $\alpha$-far, then $[\varphi(A)]\cap [\varphi(B)]=\emptyset.$
\end{lemma}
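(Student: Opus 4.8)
The plan is to reduce everything to the triangle inequality in $G'$ together with the non-contracting property of $\varphi$. Since a single edge $e_1=a_1a_2$ spans a connected (two-vertex) subgraph of $G$, and likewise $e_2=b_1b_2$, the first assertion is just the special case $A=\{a_1,a_2\}$, $B=\{b_1,b_2\}$ of the second; so I would prove only the statement about connected $A$ and $B$. The one structural fact I will use about $[\varphi(A)]$ is that it is a union of shortest $G'$-paths, each running between images $\varphi(u),\varphi(v)$ of two \emph{adjacent} vertices of $G(A)$, hence each of $G'$-length at most $\alpha\cdot d_G(u,v)=\alpha$ by Lemma \ref{distortion_edge1}.

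Suppose, for contradiction, that there is a point $z\in[\varphi(A)]\cap[\varphi(B)]$. By definition of $[\varphi(A)]$, the point $z$ lies on one of the chosen shortest $G'$-paths, say $Q$, running between $\varphi(u)$ and $\varphi(v)$ for some edge $uv$ of $G(A)$. Since $z$ lies on the shortest path $Q$, we have $d_{G'}(\varphi(u),z)+d_{G'}(z,\varphi(v))=d_{G'}(\varphi(u),\varphi(v))\le\alpha$, so one of the two endpoints of $Q$, say the image of a vertex $a\in\{u,v\}\subseteq A$, satisfies $d_{G'}(\varphi(a),z)\le\alpha/2$. The identical argument applied to $[\varphi(B)]$ produces a vertex $b\in B$ with $d_{G'}(\varphi(b),z)\le\alpha/2$. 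Now the triangle inequality gives $d_{G'}(\varphi(a),\varphi(b))\le d_{G'}(\varphi(a),z)+d_{G'}(z,\varphi(b))\le\alpha$, and non-contractibility forces $d_G(a,b)\le d_{G'}(\varphi(a),\varphi(b))\le\alpha$. But $a\in A$ and $b\in B$, contradicting the hypothesis that $A$ and $B$ are $\alpha$-far (which demands $d_G(a',b')>\alpha$ for every $a'\in A,\ b'\in B$). This contradiction yields $[\varphi(A)]\cap[\varphi(B)]=\emptyset$, and the edge version follows as the special case.

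There is no genuine obstacle here: the only points requiring care are the splitting of the length-$\le\alpha$ path $Q$ into two halves so that $z$ is within $\alpha/2$ of an endpoint, and the harmless ``without loss of generality'' choice of which endpoint that is on each side. The argument is just the ``fat edge'' picture — each image of an edge is a $G'$-path of $G'$-length at most $\alpha$, so two meeting fat edges must have a pair of endpoints within $G'$-distance $\alpha$ of one another, hence within $G$-distance $\alpha$. I would also note that the upper distortion bound enters only through the edgewise estimate of Lemma \ref{distortion_edge1} and not through (\ref{eq2}) in its full strength.
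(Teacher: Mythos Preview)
Your argument is correct and follows essentially the same route as the paper: pick a common point $z$, observe it lies on a length-$\le\alpha$ shortest path between images of adjacent vertices, halve that length to find endpoints within $\alpha/2$ on each side, and combine via the triangle inequality and non-contractibility. The only cosmetic difference is organizational: the paper first proves the edge case and then reduces the general statement to it, whereas you absorb that reduction into a single direct proof of the general case (noting the edge version is a specialization); also, the bound $d_{G'}(\varphi(u),\varphi(v))\le\alpha$ for an edge $uv$ is immediate from the distortion hypothesis rather than needing Lemma~\ref{distortion_edge1}.
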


\begin{proof} For a vertex $v$ of $G,$ let $v^*=\varphi(v).$ Suppose, by way of contradiction, that the shortest paths $P_{e_1}=[\varphi(e_1)]$ between $a^*_1,a^*_2$ and
$P_{e_2}=[\varphi(e_2)]$ between $b^*_1,b^*_2$ intersect in a vertex $x.$ Since
$$1=d_G(a_1,a_2)\le d_{G'}(a^*_1,a^*_2)\le \alpha\cdot d_G(a_1,a_2)=\alpha, ~~~~d_{G'}(a^*_1,a^*_2)=d_{G'}(a^*_1,x)+d_{G'}(x,a^*_2),$$
$$1=d_G(b_1,b_2)\le d_{G'}(b^*_1,b^*_2)\le \alpha\cdot d_G(b_1,b_2)=\alpha,  ~~~~d_{G'}(b^*_1,b^*_2)=d_{G'}(b^*_1,x)+d_{G'}(x,b^*_2),$$
we conclude that
$$\min\{ d_{G'}(a^*_1,x),d_{G'}(x,a^*_2)\}\le \alpha/2 \mbox{ and } \min \{ d_{G'}(b^*_1,x),d_{G'}(x,b^*_2)\}\le \alpha/2.$$
Suppose, without loss of generality, that $d_{G'}(a^*_1,x)\le
\alpha/2$ and $d_{G'}(b^*_1,x)\le \alpha/2.$ Since $d_G(a_1,b_1)\le
d_{G'}(a^*_1,b^*_1)\le d_{G'}(a^*_1,x)+d_{G'}(b^*_1,x)\le
\alpha/2+\alpha/2\le \alpha,$ we obtain a contradiction with the
assumption that the edges $e_1=a_1a_2$ and $e_2=b_1b_2$ are
$\alpha$-far in $G.$

To establish the second assertion, suppose, by way of contradiction,
that $[\varphi(A)]\cap [\varphi(B)]\ne\emptyset.$ From the
definition of the sets $[\varphi(A)]$ and $[\varphi(B)]$ we conclude
that $[\varphi(e_1)]\cap [\varphi(e_2)]\ne\emptyset$ for an edge
$e_1$ of $G(A)$ and an edge $e_2$ of $G(B).$ From the first part of
the proof, we know that the edges $e_1$ and $e_2$ cannot be
$\alpha$-far, thus the sets $A$ and $B$ cannot be $\alpha$-far
either.
\end{proof}

We will show now  that under some general conditions on $H,$ the presence in a graph $G$  of an $\alpha$-metric relaxed minor isomorphic to $H$
is an obstacle for embedding $G$ into a $H$-minor free graph with distortion  at most $\alpha.$

\begin{proposition}\label{metric-relaxed-minor} If a subdivided 2-connected graph $H=(V',E')$ is an $\alpha$-metric relaxed minor of a
graph $G=(V,E),$ then any embedding of $G$ into an $H$-minor free graph requires distortion $>\alpha.$
\end{proposition}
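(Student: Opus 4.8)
The plan is to argue by contradiction. Suppose $\varphi$ is an embedding of $G$ into an $H$-minor free graph $G'$ with distortion at most $\alpha$. Using the $\alpha$-metric relaxed minor structure $\mu$, I will build from $\varphi$ an honest minor model of $H$ inside $G'$, contradicting $H$-minor-freeness. For each vertex $v$ of $H$, the branch set is $[\varphi(\mu(v))]$ — the union of shortest $G'$-paths realizing the adjacencies inside the connected subgraph $G(\mu(v))$; since $G(\mu(v))$ is connected, $[\varphi(\mu(v))]$ is connected in $G'$, giving condition (i). For each edge $e=uv$ of $H$, the path $P_e$ is a path of $G$ from a vertex of $\mu(u)$ to a vertex of $\mu(v)$; I take $[\varphi(P_e\cup\mu(u)\cup\mu(v))]$, or more carefully a connected subgraph of $G'$ linking $[\varphi(\mu(u))]$ to $[\varphi(\mu(v))]$ obtained from shortest $G'$-paths along the edges of the path $P_e$. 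This will play the role of the ``expanded edge'' of the minor model.

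The key separation input is Lemma~\ref{basic-fact-distortion}: since $H$ is an $\alpha$-metric relaxed minor, condition (v$^+$) says that for any two \emph{non-incident} edges $e$ and $e'$ of $H$, the connected sets $\mu(u)\cup P_e\cup\mu(v)$ and $\mu(u')\cup P_{e'}\cup\mu(v')$ are $\alpha$-far in $G$; hence by Lemma~\ref{basic-fact-distortion} their images $[\varphi(\cdot)]$ are disjoint in $G'$. This immediately gives that distinct branch sets $[\varphi(\mu(v))],[\varphi(\mu(v'))]$ are disjoint (take edges of $H$ incident to $v$ and to $v'$; because $H$ is $2$-connected and subdivided, such non-incident edges exist, or one argues directly from (v$^+$) applied to suitable edges) — this is condition (ii) — and that the expanded edges for non-incident edges of $H$ are pairwise disjoint and disjoint from branch sets of vertices not on them — conditions (iv) and (v). For incident edges $e=uw$, $e'=uw'$, the expanded edges may intersect, but only after we have already passed through the branch set $[\varphi(\mu(u))]$; by restricting each expanded edge to the portion lying outside the two branch sets it joins (analogously to the first paragraph of the proof of Proposition~\ref{relaxed-minor}), we can arrange that these trimmed connectors meet the branch sets only at their endpoints and meet each other only inside $[\varphi(\mu(u))]$.

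Once conditions (i)–(v) hold for this model in $G'$, contracting each $[\varphi(\mu(v))]$ to a point and each trimmed connector to an edge realizes $H$ as a \emph{relaxed} minor of $G'$. Now I invoke Proposition~\ref{relaxed-minor}: because $H$ is a subdivided graph, a relaxed minor is a genuine minor, so $H$ is a minor of $G'$ — contradicting that $G'$ is $H$-minor free. Therefore no embedding of distortion $\le\alpha$ exists, i.e. every embedding requires distortion $>\alpha$.

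The main obstacle will be the bookkeeping around \emph{incident} edges of $H$ and the branch-set boundaries: condition (v$^+$) only controls non-incident pairs, so the images of two expanded edges sharing a vertex $u$ of $H$ can genuinely overlap, and I must carve them so that the overlap is confined to $[\varphi(\mu(u))]$ and the ``edge'' parts attach to ``vertex'' parts cleanly. This is exactly the kind of surgery carried out in Proposition~\ref{relaxed-minor} (replacing a path by a well-chosen subpath, shortcutting, shifting the attachment vertex), and the same ideas transfer, but one has to be careful that the shortest $G'$-paths chosen for $[\varphi(\cdot)]$ behave well — in particular that trimming a connector at its last exit from a branch set keeps it connected and still joins the two branch sets. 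A secondary point is the $2$-connectedness hypothesis on $H$: it guarantees enough non-incident edge pairs that conditions (ii) and (iv) for the vertex branch sets follow from (v$^+$); for a subdivided $2$-connected $H$ every vertex lies on at least two independent edges, so distinct branch sets and branch-set/connector pairs for non-adjacent configurations are separated, and the remaining adjacent configurations are handled by the surgery above.
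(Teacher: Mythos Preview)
Your approach is essentially the paper's: assume an $\alpha$-embedding $\varphi$ into an $H$-minor free $G'$, push the relaxed-minor map $\mu$ through $\varphi$ (extending $\varphi$ to edges by shortest paths) to get a map $\nu$ from $H$ to $G'$, verify (i)--(v) using Lemma~\ref{basic-fact-distortion}, conclude $H$ is a relaxed minor of $G'$, and invoke Proposition~\ref{relaxed-minor} for the contradiction. The paper also makes explicit, as you sketch, that 2-connectivity plus triangle-freeness (which follows from $H$ being subdivided) lets one derive the strengthened conditions (ii$^+$) and (iv$^+$) from (v$^+$), so that Lemma~\ref{basic-fact-distortion} applies to yield (ii) and (iv) for $\nu$.

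The one place you go astray is the ``main obstacle'' you describe: the surgery on images of \emph{incident} edges is not needed, and your claim that their overlap can be confined to $[\varphi(\mu(u))]$ is neither justified nor required. For a \emph{relaxed} minor only condition (v) is demanded, and (v) says nothing whatsoever about incident edges---their images may intersect anywhere. The paper simply checks (i)--(v) for $\nu$ and stops; it is Proposition~\ref{relaxed-minor} that does all the work of upgrading the relaxed minor to a genuine minor, and that proposition's induction handles the incident-edge overlaps internally. So drop the trimming/surgery paragraph entirely and just verify (i)--(v) directly from (ii$^+$), (iv$^+$), (v$^+$) via Lemma~\ref{basic-fact-distortion}; the proof then goes through cleanly.
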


\begin{proof} Suppose, by way of contradiction, that $G$ has an embedding $\varphi$ with distortion $\le \alpha$ into an $H$-minor free graph $G'.$
Let $\mu: V'\cup E'\mapsto 2^V$ be a map showing that $H$ is an
$\alpha$-metric relaxed minor of $G.$ Before deriving a
contradiction with this assumption, we consider some properties of
maps $\varphi$ and $\mu$. First note that we can extend $\varphi$
from the vertex-set $V$ of $G$ to the edge-set $E$ by associating
with each  edge $e$ of $G$ the  shortest path $P_e:=[\varphi(e)]$ of
$G'.$ Pick any vertex $v$ of $H.$ Then, $\varphi(\mu(v))$ is a
connected subgraph of $G'$ because each of the maps $\mu$ and
$\varphi$ maps connected subgraphs to connected subgraphs. Moreover,
from Lemma \ref{basic-fact-distortion} we know that $\varphi$ maps
two $\alpha$-far connected subgraphs of $G$ to two disjoint
subgraphs of $G'.$ As to the map $\mu,$  we assert that it satisfies
the following two conditions:
\begin{itemize}
\item[(ii$^+$)] for any  two different vertices $v,v'$ of $H,$ the sets $\mu(v)$ and $\mu(v')$ are $\alpha$-far;
\item[(iv$^+$)] for any vertex $v$ and any edge $e$ of $H$ with $v\notin e,$ the sets $\mu(v)$ and $\mu(e)=P_e$ are $\alpha$-far.
\end{itemize}
Since $H$ is 2-connected, any two distinct vertices $v,v'$ belong to
a common cycle of $H.$  Since $H$ is triangle-free, $v$ and $v'$
belong to two non-incident edges $e,e'$ of this cycle.  Applying
property (v$^+$) to $e$ and $e',$ we conclude that $\mu(v)$ and
$\mu(v')$ are $\alpha$-far, establishing (ii$^+$). Analogously for
(iv$^+$), if $v\notin e$ then, by 2-connectivity of $H$, we can find
a cycle passing via $v$ and $e.$ Since $G$ is triangle-free, one of
two edges of this cycle containing $v$, say $e'$,  is not incident
to $e.$ Again, applying the condition (v$^+$) to the edges $e$ and
$e',$ we conclude that the sets $\mu(v)$ and $P_e$ are $\alpha$-far,
establishing (iv$^+$).

 Now, we define the following map $\nu:  V'\cup E'\mapsto 2^{V(G')}$ from $H$ to $G'.$ For each vertex $v\in V',$ we set $\nu(v)=\varphi(\mu(v)).$
 For each edge $e=uv$ of $H,$ $\mu(e)=P_e$ is a path of the graph $G$ with end-vertices $u^*\in \mu(u)$ and $v^*\in \mu(v).$ Each edge $f$ of $P_e$ is
 mapped by $\varphi$ to a path $\varphi(f)$ of $G'.$ Define $\nu(e)$ to be any path of $G'$ between the vertices $u'=\varphi(u^*)$ and $v'=\varphi(v^*)$ contained in the set $\bigcup\{ \varphi(f): f \mbox{ is an edge of } P_e\}$.  From definition of $\nu$ and properties of $\mu$ and $\varphi$ it immediately follows that  the map $\nu$ satisfies the conditions (i) and (iii). We will show now that $\nu$ satisfies the conditions (ii), (iv), and (v) as well. To verify (ii), pick two distinct vertices $u,v$ of $H.$

 By condition (ii$^+$), the sets $\mu(u)$ and $\mu(v)$ are $\alpha$-far, thus the second assertion of  Lemma \ref{basic-fact-distortion} implies that the sets $\nu(u)=\varphi(\mu(u))$ and $\nu(v)=\varphi(\mu(v))$ are disjoint, thus showing (ii). Analogously,
 if  $v$ is a vertex and $e$ is an edge of $H$ with $v\notin e,$ then, by (iv$^+$), the sets $\mu(v)$ and $P_e=\mu(e)$ are $\alpha$-far, thus,
 by Lemma \ref{basic-fact-distortion}, the sets $\nu(v)=\varphi(\mu(v))$ and $\varphi(P_e)$ are disjoint. Since $\nu(e)\subseteq \varphi(P_e),$ the sets $\nu(v)$ and $\nu(e)$ are disjoint as well, establishing (iv).
 The last condition (v) can be derived in a similar way by using (v$^+$) and Lemma \ref{basic-fact-distortion}. Hence, the map $\nu$ satisfies the conditions (i)-(v), thus $H$ is a relaxed minor of $G'.$
 Since $H$ is triangle-free, by Proposition \ref{relaxed-minor},  $H$ is a minor of $G',$ contrary to the assumption that the graph $G'$ is $H$-minor free. This concludes the proof
 of the proposition.
\end{proof}

\subsection{Lower bounds for $\alpha$-embeddings into $K_{2,r}$-minor free graphs} We will use the results of previous section to give lower bounds for the multiplicative
distortion of embedding an unweighted graph $G=(V,E)$ into $K_{2,r}$-minor free (possibly weighted) graphs.

\begin{proposition} \label{K_2,r} If for $\alpha>1$ a cluster $C$ of a layering partition ${\mathcal LP}$ of a graph $G$ contains $r\ge 3$
vertices $v^*_1,\ldots,v^*_r$ that are pairwise $(4\alpha+2)$-far, then any embedding $\varphi$ of $G$ into a $K_{2,r}$-minor free graph
has distortion $>\alpha.$
\end{proposition}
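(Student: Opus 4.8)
The plan is to deduce this from Proposition~\ref{metric-relaxed-minor}: it suffices to exhibit $K_{2,r}$ as an $\alpha$-metric relaxed minor of $G$. Indeed, for $r\ge 3$ the graph $K_{2,r}$ is $2$-connected and is a subdivided graph in the required sense — take $F$ to be the side of size $r$, all of whose vertices have degree $2$, so that the underlying graph is the $r$-fold dipole and $K_{2,r}$ is its subdivision — and then Proposition~\ref{metric-relaxed-minor} immediately gives that every embedding of $G$ into a $K_{2,r}$-minor free graph has distortion $>\alpha$. So the entire task is to build the map $\mu$.

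I would think of $K_{2,r}$ as two hubs $p,q$ each joined to midpoints $f_1,\dots,f_r$, noting that the only pairs of non-incident edges are $\{pf_k,\,qf_l\}$ with $k\ne l$. Write $C=L^i_j$, set $m':=\lfloor\alpha\rfloor+1$, $m:=2m'$, $t:=i-m$, and for each $k$ fix a shortest path $Q_k$ from $v^*_k$ to $s$, letting $a_k,w_k$ be its vertices at levels $t$ and $t+m'$. Define $\mu(p):=B_t(s)$; $\mu(q):=Z$, where $Z$ is a connected subgraph with all vertices at level $\ge i$ containing $v^*_1,\dots,v^*_r$ (such a $Z$ exists because the $v^*_k$ lie in one cluster, hence are pairwise joined by paths outside $B_{i-1}(s)$); $\mu(f_k):=\{w_k\}$; $\mu(pf_k):=Q_k[a_k,w_k]$ and $\mu(qf_k):=Q_k[w_k,v^*_k]$. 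One needs $t\ge 0$, which holds since $v^*_1,v^*_2$ are at distance $i$ from $s$ and are $(4\alpha+2)$-far, so $2i\ge d_G(v^*_1,v^*_2)>4\alpha+2$, giving $i>2\alpha+1$ and hence $i\ge m$ by integrality.

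The relaxed-minor conditions (i)--(v) are then bookkeeping on levels: vertices of $\mu(p)$ sit at level $\le t$, of $\mu(q)$ at level $\ge i$, of $\mu(f_k)$ at level exactly $t+m'$; a shortest path meets each level once; subpaths of shortest paths are induced paths (handling~(iii)); and $d_G(w_k,w_l)\ge d_G(v^*_k,v^*_l)-d_G(v^*_k,w_k)-d_G(v^*_l,w_l)>(4\alpha+2)-2m'>0$, so the $w_k$ are distinct. The substance is the strengthened condition (v$^+$): for $k\ne l$ the sets $X:=\mu(p)\cup\mu(pf_k)\cup\mu(f_k)=B_t(s)\cup Q_k[a_k,w_k]$ and $Y:=\mu(q)\cup\mu(qf_l)\cup\mu(f_l)=Z\cup Q_l[w_l,v^*_l]$ must be $\alpha$-far. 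I would split on $x\in X$, $y\in Y$, using that every vertex of $X$ is at distance $\le t+m'$ from $s$ and every vertex of $Y$ at distance $\ge t+m'$ from $s$. If $x\in B_t(s)$ then $d_G(x,y)\ge d_G(s,y)-d_G(s,x)\ge(t+m')-t=m'>\alpha$; if $y\in Z$ then $d_G(x,y)\ge d_G(s,y)-d_G(s,x)\ge i-(t+m')=m'>\alpha$; otherwise $x$ lies on $Q_k$ at level $\ge t+1$ and $y$ on $Q_l$ at level $\ge t+m'$, so $d_G(v^*_k,x)=i-d_G(s,x)\le 2m'-1$ and $d_G(v^*_l,y)=i-d_G(s,y)\le m'$, and via $v^*_k,v^*_l$ we get $d_G(x,y)\ge d_G(v^*_k,v^*_l)-d_G(v^*_k,x)-d_G(v^*_l,y)>(4\alpha+2)-(2m'-1)-m'=4\alpha-3\lfloor\alpha\rfloor\ge\alpha$. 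Thus $X$ and $Y$ are $\alpha$-far, (v$^+$) holds, $\mu$ is an $\alpha$-metric relaxed $K_{2,r}$-minor, and Proposition~\ref{metric-relaxed-minor} closes the argument.

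The main obstacle, and the reason the hypothesis asks for $(4\alpha+2)$-far vertices, is balancing the constraints on $m,m'$: $m'$ must be large enough ($>\alpha$) to push the ``low'' set $B_t(s)\cup Q_k[a_k,w_k]$ away from everything at level $\ge t+m'$ and to keep $w_k$ well below the top hub $Z$, yet the total descent $m$ must be small enough that the residual bound $(4\alpha+2)-(2m'-1)-m'$ coming from the $(4\alpha+2)$-far-ness of $v^*_k,v^*_l$, at the single level $t+m'$ where the two arcs can still meet, is still $>\alpha$. The slightly asymmetric choice $m=2m'$, $m'=\lfloor\alpha\rfloor+1$, together with the harmless observation that the level-$t$ vertex $a_k$ is absorbed into $B_t(s)$ (which is what licenses $d_G(v^*_k,x)\le 2m'-1$ rather than $2m'$), is exactly what makes all three inequalities hold simultaneously; everything else is routine.
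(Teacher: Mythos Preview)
Your proposal is correct and follows essentially the same construction as the paper: one hub of $K_{2,r}$ is mapped to a ball around $s$, the other to a connected region above the cluster $C$ containing the $v^*_k$, the degree-$2$ vertices to midpoints on shortest paths from the $v^*_k$ toward $s$, and the edges to the two halves of those paths; the verification of~(v$^+$) via the triangle inequality through $v^*_k,v^*_l$ is also the same. The only notable difference is cosmetic: you handle non-integer $\alpha$ explicitly via $m'=\lfloor\alpha\rfloor+1$, whereas the paper writes path lengths of $\alpha+1$ and $2\alpha+2$ directly.
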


\begin{proof} Suppose that the layering partition ${\mathcal LP}$ of $G$ was defined with respect to the
vertex $s$ and let $T$ be a BFS tree rooted at $s.$ Let $k$ denote
the distance from $s$ to any vertex of  the cluster $C.$ Since $C$
contains  $(4\alpha+2)$-far vertices $v^*_1,\ldots,v^*_r,$ we
conclude that $k\ge 2\alpha+2.$ We will define now a mapping $\mu$
from $K_{2,r}$ to $G$ which will allow us to conclude that $K_{2,r}$
is an $\alpha$-metric relaxed minor of $G.$ Then, since $K_{2,r}$ is
2-connected and triangle-free, Proposition
\ref{metric-relaxed-minor} will show that any embedding of $G$ into
a $K_{2,r}$-minor free graph has distortion $>\alpha.$ Denote by
$u_1,\ldots,u_r,v,w$ the vertices of $K_{2,r},$ where $v$ and $w$
are the two vertices of degree $r.$ Finally, denote by $e_i$ the
edge $vu_i$ and by $f_i$ the edge $wu_i,$ $i=1,\ldots,r.$

Let $P_1,\ldots, P_r$ be the paths of the tree $T$ of length
$\alpha+1$ from the vertices $v^*_1,\ldots,v^*_r,$ respectively,
towards the root $s.$ Denote by $u^*_1,\ldots,u^*_r$ the other end
vertices of the paths $P_1,\ldots,P_r.$ Let $R_1,\ldots,R_r$ be the
paths of $T$ of length $\alpha+1$ from the vertices
$u^*_1,\ldots,u^*_r$,  respectively, towards $s.$ Denote by
$w^*_1,\ldots,w^*_r$ the other end vertices of the paths
$R_1,\ldots,R_r.$ Set $\mu(u_i):=u^*_i,$  $\mu(e_i):=P_i$ and
$\mu(f_i):=R_i$ for $i=1,\ldots,r.$ Let $\mu(v)$ be the connected
subgraph of $G$ induced by all (or some) paths connecting the
vertices $v^*_1,\dots,v^*_r$ outside the ball $B_{k-1}(s).$ Finally,
let $\mu(w):=B_{k-2\alpha-2}(s)$ (clearly, $w^*_1,\ldots,w^*_r$
belong to $\mu(w)$); for an illustration, see Fig. \ref{fig_K_2,r}. From the definition of the map $\mu$ and of the
layering partition ${\mathcal LP}$, we immediately conclude that
$\mu$ satisfies the conditions (i) and (iii). We will show now that
$\mu$ also satisfies the conditions (ii$^+$),(iv$^+$), and (v$^+$).
Since $\mu(v)\subseteq \cup_{j\ge k} L^j,
\mu(w)=B_{k-2\alpha-2}(s),$ and the vertices
$u^*_1=\mu(u_1),\ldots,u^*_r=\mu(u_r)$ all belong to the sphere
$L^{k-\alpha-1},$ we conclude that the $\mu$-images  of the vertices
of $K_{2,r}$ are pairwise $\alpha$-far in $G,$ whence $\mu$
satisfies the condition (ii$^+$). Analogously, from the definition
of the layering of $G$ we conclude that any vertex of $\mu(v)$ is at
distance $>\alpha$ from any path $R_i=\mu(f_i)$ and any vertex of
$\mu(w)$ is at distance $>\alpha$ from any path $P_i=\mu(e_i).$ If a
vertex $u^*_i$ is at distance $\le \alpha$ from a vertex $x$ of
$P_j\cup R_j$ for $j\ne i,$ then, by triangle inequality, we obtain
$d_G(v^*_i,v^*_j)\le d_G(v^*_i,u^*_i)+d_G(u^*_i,x)+d_G(x,v^*_j)\le
\alpha+1+\alpha+d_G(v^*_j,x).$ Since $x\ne w^*_j,$ $d_G(v^*_j,x)\le
2\alpha+1,$ yielding  $d_G(v^*_i,v^*_j)\le
\alpha+1+\alpha+2\alpha+1=4\alpha+2,$ contrary to the assumption
that $v^*_i$ and $v^*_j$ are $(4\alpha+2)$-far. This contradiction
shows that $\mu$ satisfies the condition (iv$^+$). It remains to
show that $\mu$ also satisfies the condition (v$^+$), namely that
any two paths $P_i$ and $R_j$ with $i\ne j$ are $\alpha$-far. If
$d_G(x,y)\le \alpha$ for $x\in P_i\setminus \{ v^*_i,u^*_i\}$ and
$y\in R_j\setminus \{ u^*_j,w^*_j\},$ then $d_G(v^*_i,v^*_j)\le
d_G(v^*_i,x)+d_G(x,y)+d_G(y,v^*_j)\le \alpha+\alpha+2\alpha+1\le
4\alpha+1,$ contrary to the assumption that $v^*_i$ and $v^*_j$ are
$\alpha$-far. This contradiction shows that $\mu$ satisfies (v$^+$),
i.e., indeed $K_{2,r}$ is an $\alpha$-metric relaxed minor of $G.$
\end{proof}

\begin{figure}[t]
\begin{center}
\scalebox{0.4}
{\input{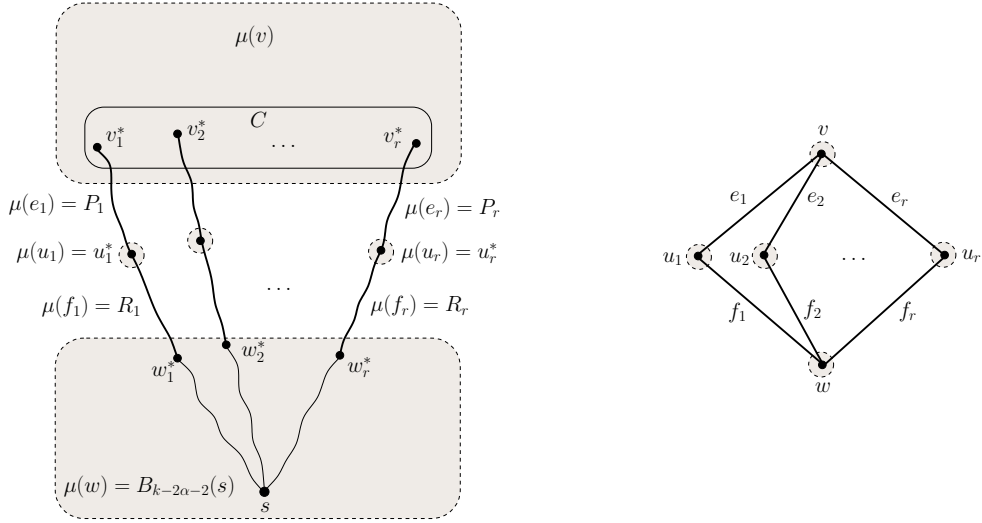}}
\end{center}
\caption{To the proof of Proposition \ref{K_2,r}}
\label{fig_K_2,r}
\end{figure}

Notice that outerplanar graphs are exactly the graphs which do not contain $K_{2,3}$ and $K_4$ minors.
From Proposition \ref{K_2,r} we immediately obtain the following corollary:

\begin{corollary} \label{K_2,3} If for $\alpha>1$ a cluster $C$ of a layering partition
 of a graph $G$ contains three vertices that are pairwise $(4\alpha+2)$-far,
then any embedding $\varphi$ of $G$ into an outerplanar graph  has distortion $>\alpha.$
\end{corollary}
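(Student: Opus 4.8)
The plan is to derive Corollary~\ref{K_2,3} directly from Proposition~\ref{K_2,r} by specializing $r$ to the value $3$. First I would recall the standard structural fact, mentioned just before the statement, that a graph is outerplanar if and only if it contains neither $K_4$ nor $K_{2,3}$ as a minor; in particular every outerplanar graph is $K_{2,3}$-minor free. Thus if $\varphi$ is an embedding of $G$ into an outerplanar graph $G'$, then $\varphi$ is in particular an embedding of $G$ into a $K_{2,3}$-minor free graph.

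Next I would invoke Proposition~\ref{K_2,r} with $r=3$: the hypothesis of that proposition, that some cluster $C$ of a layering partition of $G$ contains $r\ge 3$ pairwise $(4\alpha+2)$-far vertices $v^*_1,\dots,v^*_r$, is exactly the hypothesis of the corollary in the case $r=3$ (three vertices of $C$ that are pairwise $(4\alpha+2)$-far). Proposition~\ref{K_2,r} then yields that any embedding of $G$ into a $K_{2,3}$-minor free graph has distortion $>\alpha$. Combining this with the observation of the previous paragraph, any embedding $\varphi$ of $G$ into an outerplanar graph has distortion $>\alpha$, which is the claim.

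There is essentially no obstacle here: the corollary is an immediate consequence of Proposition~\ref{K_2,r} together with the minor characterization of outerplanar graphs, and the proof is a two-line citation. If one wished to be self-contained about the characterization of outerplanarity, the only mildly non-trivial point would be recalling (or citing~\cite{Di}) that the excluded minors for outerplanarity are precisely $K_4$ and $K_{2,3}$; but since every outerplanar graph is trivially $K_{2,3}$-minor free (one does not even need the full characterization, only the easy direction that $K_{2,3}$ is not outerplanar, hence not a minor of any outerplanar graph), even this is routine. Accordingly I would simply state that the result follows from Proposition~\ref{K_2,r} applied with $r=3$, using that outerplanar graphs are $K_{2,3}$-minor free.
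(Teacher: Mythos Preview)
Your proposal is correct and matches the paper's own treatment: the paper states that the corollary follows immediately from Proposition~\ref{K_2,r}, and your argument is precisely the specialization $r=3$ together with the observation that outerplanar graphs are $K_{2,3}$-minor free.
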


\section{Approximation algorithm for embedding graph metrics  into  outerplanar graphs}
We present now the algorithm for constant-factor approximation of the distortion of
the best embedding of an unweighted graph into outerplanar metrics. Let $\lambda$ be the
best such multiplicative distortion for an input graph $G$. We first
study the structure of a layered partition of $G$.

\subsection{Small, medium, and big clusters}
Let $G=(V,E)$ be the input graph and consider a layering partition
${\mathcal LP}$ of $G$ into clusters. We assume that  $\lambda\ge 1$
is so that each cluster $C$ of ${\mathcal LP}$  contains at most two
vertices which are $(4\lambda+2)$-far (otherwise, by Corollary
\ref{K_2,3}, the optimal distortion  of embedding $G$ into an
outerplanar graph is larger than $\lambda$). Set
$\Lambda:=4\lambda+2.$ We call a cluster $C$ {\it bifocal} if it has
exactly two $\Lambda$-far vertices $c_1$ and $c_2.$  In addition, for
such cluster let $C_1=\{
x\in C: d_G(x,c_1)\le d_G(x,c_2)\}$ and $C_2=\{ x\in C:
d_G(x,c_2)\le d_G(x,c_1)\},$ and call $C_1$ and $C_2$ the {\it
cells} of $C$ centered at $c_1$ and $c_2,$ respectively (in what
follows,  we will suppose that $c_1$ and $c_2$ form a diametral pair
of $C$, i.e., $d_G(c_1,c_2)={\rm diam}(C)=\max\{d_G(u,v): u,v\in
C\}$). If ${\rm diam}(C)\le \Lambda$ (i.e., $C$ is not bifocal),
then the cluster $C$ is called {\it small}. Then $C$ has a unique
cell centered at an arbitrary vertex of $C.$  A bifocal cluster $C$
is called {\it big} if ${\rm diam}(C)>16\lambda+12,$ otherwise, if
$\Lambda<{\rm diam}(C)\le 16\lambda+12,$ then $C$ is called a {\it
medium} cluster. An {\it almost big cluster} is a medium cluster $C$
such that ${\rm diam}(C)>16\lambda+10.$  We say that a cluster $C$
is $\Delta$-{\it separated} if $C$ is bifocal with cells $C_1$ and
$C_2$ and $d_G(u,v)>\Delta$ for any $u\in C_1$ and $v\in C_2.$
Further, we will set $\Delta:=8\lambda+6.$ We say that a bifocal
cluster $C'$ is {\it spread} if both cells $C_1,C_2$ of its father
$C$ are adjacent to $C'.$ Finally, we say that two disjoint sets $A$
and $B$ are {\it adjacent} in $G$ if there exists an edge of $G$
with one end in $A$ and another end in $B.$

\begin{lemma} \label{bifocal_cluster} If $C$ is a bifocal cluster of
a layering partition ${\mathcal LP}$ of $G,$ then the diameter of
each of its cells $C_1$ and $C_2$ is at most $2\Lambda.$
\end{lemma}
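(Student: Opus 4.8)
The plan is to lean on the standing hypothesis — valid by Corollary~\ref{K_2,3} — that no cluster of ${\mathcal LP}$ contains three pairwise $\Lambda$-far vertices. Fix a bifocal cluster $C$ with diametral $\Lambda$-far pair $c_1,c_2$ and corresponding cells $C_1,C_2$. By the evident symmetry (exchange the roles of $c_1$ and $c_2$, hence of $C_1$ and $C_2$) it suffices to prove ${\rm diam}(C_1)\le 2\Lambda$.

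First I would isolate the trivial case: if ${\rm diam}(C)=d_G(c_1,c_2)\le 2\Lambda$, then $C_1\subseteq C$ gives ${\rm diam}(C_1)\le{\rm diam}(C)\le 2\Lambda$ at once. So from now on assume $d_G(c_1,c_2)>2\Lambda$. The key step is the observation that under this assumption every $x\in C_1$ satisfies $d_G(x,c_2)>\Lambda$: otherwise the defining inequality $d_G(x,c_1)\le d_G(x,c_2)$ of the cell $C_1$ would also yield $d_G(x,c_1)\le\Lambda$, and then the triangle inequality would force $d_G(c_1,c_2)\le 2\Lambda$, a contradiction.

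Now take any two vertices $u,v\in C_1$. By the previous step $d_G(u,c_2)>\Lambda$ and $d_G(v,c_2)>\Lambda$. If moreover $d_G(u,v)>\Lambda$, then $\{u,v,c_2\}$ would be three pairwise $\Lambda$-far vertices all lying in the single cluster $C$, contradicting the standing hypothesis; hence $d_G(u,v)\le\Lambda\le 2\Lambda$. Together with the trivial case this gives ${\rm diam}(C_1)\le 2\Lambda$, and applying the same reasoning with $c_1,C_1$ replaced by $c_2,C_2$ completes the proof. I do not anticipate a genuine obstacle here: the argument is a short triangle-inequality computation guarded by the $\Lambda$-far-triple obstruction. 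The only points needing care are the case split on whether ${\rm diam}(C)$ exceeds $2\Lambda$ (without it one cannot guarantee $d_G(x,c_2)>\Lambda$ for $x\in C_1$) and checking that the three vertices invoking the obstruction genuinely lie in $C$ (they do, since $u,v\in C_1\subseteq C$ and $c_2\in C$).
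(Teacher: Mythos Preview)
Your proof is correct, and it rests on the same key hypothesis the paper uses (no cluster contains three pairwise $\Lambda$-far vertices). However, the paper's argument is shorter and avoids your case split: it observes directly that every $x\in C_1$ satisfies $d_G(x,c_1)\le\Lambda$ (since otherwise $d_G(x,c_2)\ge d_G(x,c_1)>\Lambda$ by the definition of $C_1$, making $\{x,c_1,c_2\}$ a forbidden $\Lambda$-far triple), and then the triangle inequality gives $d_G(u,v)\le d_G(u,c_1)+d_G(c_1,v)\le 2\Lambda$ for all $u,v\in C_1$.

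Your detour through ``$d_G(x,c_2)>\Lambda$ for $x\in C_1$'' requires the extra assumption ${\rm diam}(C)>2\Lambda$, which is why you needed the case split; the paper sidesteps this by bounding the distance to $c_1$ rather than from $c_2$. On the other hand, your non-trivial branch actually proves the sharper bound ${\rm diam}(C_1)\le\Lambda$ whenever ${\rm diam}(C)>2\Lambda$, anticipating part of Lemma~\ref{prop_big_cluster}(ii); so your route buys a little extra information at the cost of a slightly longer argument.
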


\begin{proof} Let $x,y\in C_1.$ Clearly, $d_G(x,c_1)\leq 4\lambda+2$ and $d_G(y,c_1)\le 4\lambda+2.$
Therefore, by triangle inequality, $d_G(x,y)\le
d_G(x,c_1)+d_G(c_1,y)\le 8\lambda+4=2\Lambda.$
\end{proof}

\begin{lemma} \label{prop_big_cluster} If $C$ is a bifocal cluster of
a layering partition ${\mathcal LP}$ of $G$ such that ${\rm diam}(C)=d_G(c_1,c_2)>12\lambda+6,$ then $C$ has the
following properties:

\begin{itemize}
\item[(i)] $C$ is $({\rm diam}(C)-2\Lambda-1)$-separated, in particular $C_1\cap C_2=\emptyset;$
\item[(ii)] ${\rm diam}(C_1)\le \Lambda$ and ${\rm diam}(C_2)\le \Lambda.$
\end{itemize}
If $C$ is a big cluster, then $C$ is $(8\lambda+8)$-separated, and
if $C$ is an almost big cluster, then $C$ is
$(8\lambda+6)$-separated. In particular, big and almost big clusters
are $\Delta$-separated. If $C$ is a big or an almost big cluster,
then ${\rm diam}(C_1)\le \Lambda$ and ${\rm diam}(C_2)\le \Lambda.$
\end{lemma}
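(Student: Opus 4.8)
The plan is to prove the three bulleted assertions of Lemma~\ref{prop_big_cluster} in order, using the bifocal structure together with the distance constraints. Throughout, write $d:={\rm diam}(C)=d_G(c_1,c_2)$, $\Lambda=4\lambda+2$, and recall from Lemma~\ref{bifocal_cluster} that each cell has diameter at most $2\Lambda$. For part~(i), fix $u\in C_1$ and $v\in C_2$; I would bound $d_G(u,v)$ from below by starting from $d=d_G(c_1,c_2)\le d_G(c_1,u)+d_G(u,v)+d_G(v,c_2)$ and controlling the two outer terms. The key point is that $u\in C_1$ means $d_G(u,c_1)\le d_G(u,c_2)$; combined with $d_G(u,c_1)+d_G(u,c_2)\ge d_G(c_1,c_2)=d$ this is not immediately enough, so instead I would use that $u\in C$ and $c_1,c_2$ is a diametral pair, hence $d_G(u,c_1)\le d$ and $d_G(u,c_2)\le d$, and crucially that since there are only two $\Lambda$-far vertices in $C$, any vertex $u$ of $C$ lies within $\Lambda$ of $c_1$ or of $c_2$ (otherwise $\{u,c_1,c_2\}$ would be three pairwise $\Lambda$-far vertices, contradicting our standing assumption on $\lambda$). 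Since $u\in C_1$ we get $d_G(u,c_1)\le d_G(u,c_2)$, and if $d_G(u,c_1)>\Lambda$ then $d_G(u,c_2)>\Lambda$ too, contradicting the above; hence $d_G(u,c_1)\le\Lambda$, and symmetrically $d_G(v,c_2)\le\Lambda$. This already gives ${\rm diam}(C_1)\le 2\Lambda$ and in fact shows $C_1\subseteq B_\Lambda(c_1)$, $C_2\subseteq B_\Lambda(c_2)$.

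Now for part~(i) the separation bound: from $d\le d_G(c_1,u)+d_G(u,v)+d_G(v,c_2)\le \Lambda+d_G(u,v)+\Lambda$ I get $d_G(u,v)\ge d-2\Lambda$. To upgrade $d-2\Lambda$ to $d-2\Lambda-1$ — wait, that goes the wrong way, so I must be more careful: actually the claimed bound is that $C$ is $(d-2\Lambda-1)$-separated, i.e.\ $d_G(u,v)> d-2\Lambda-1$, which follows immediately from $d_G(u,v)\ge d-2\Lambda$ since $d-2\Lambda> d-2\Lambda-1$. The hypothesis $d>12\lambda+6=3\Lambda$ guarantees $d-2\Lambda-1>\Lambda-1\ge 0$, so the separation is nontrivial and in particular $C_1\cap C_2=\emptyset$ (a vertex in both would be at distance $0\le d-2\Lambda-1$ from itself — for this I need $d-2\Lambda-1\ge 0$, which holds since $d>3\Lambda\ge 2\Lambda+1$ as $\Lambda\ge 2$).

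For part~(ii) I would sharpen ${\rm diam}(C_i)\le 2\Lambda$ down to $\le\Lambda$. Having shown $C_1\subseteq B_\Lambda(c_1)$ is not quite enough by itself (that only gives diameter $2\Lambda$); the improvement should come from combining the separation from part~(i) with a parity/layering argument or with the fact that any two vertices of $C_1$ can be joined by a path outside the ball $B_{i-1}(s)$ (definition of clusters), which forces their common neighbourhood structure. The cleanest route: take $x,y\in C_1$ and suppose $d_G(x,y)>\Lambda$; then $x,y,c_2$ would be pairwise $\Lambda$-far (since $x,y\in B_\Lambda(c_1)$ are far from $c_2$ by the separation in (i), provided $d-2\Lambda-1\ge\Lambda$, i.e.\ $d\ge 3\Lambda+1$, guaranteed by $d>12\lambda+6$), contradicting that $C$ has only two pairwise $\Lambda$-far vertices. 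Hence ${\rm diam}(C_1)\le\Lambda$, and symmetrically for $C_2$. I expect \emph{this} step — getting the right three-far-points contradiction with the exact constants — to be the main bookkeeping obstacle, since it requires $x,y$ to be genuinely $\Lambda$-far from $c_2$, which needs $d-2\Lambda-1>\Lambda$ strictly, matching the hypothesis $d>12\lambda+6$ exactly.

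Finally, the concluding sentences of the lemma are specializations: if $C$ is big then $d>16\lambda+12$, so $d-2\Lambda-1=d-(8\lambda+5)>16\lambda+12-8\lambda-5=8\lambda+7$, i.e.\ $C$ is $(8\lambda+8)$-separated after noting the strict inequality and integrality of distances (all graph distances are integers, so $d_G(u,v)>8\lambda+7$ means $d_G(u,v)\ge 8\lambda+8$, hence $>8\lambda+7$; to phrase it as ``$(8\lambda+8)$-separated'' meaning $d_G(u,v)>8\lambda+8$ one needs $d\ge 16\lambda+14$, so I would double-check whether the intended reading uses the integrality bump $d>16\lambda+12\Rightarrow d\ge 16\lambda+13$). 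Similarly if $C$ is almost big then $d>16\lambda+10$, giving $d-2\Lambda-1>8\lambda+5$, hence $(8\lambda+6)$-separation, again via integrality of distances. In both cases $d>12\lambda+6$ holds (since $16\lambda+10>12\lambda+6$ for $\lambda\ge 1$), so part~(ii) applies and ${\rm diam}(C_1),{\rm diam}(C_2)\le\Lambda$. The only delicate points are the integer-rounding steps and verifying the strict inequalities line up with the definitions of ``big'' and ``almost big'', so I would carry the arithmetic carefully there.
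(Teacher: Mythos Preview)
Your proposal is correct and essentially identical to the paper's proof: both obtain $d_G(u,c_1)\le\Lambda$ for $u\in C_1$ from the ``no three pairwise $\Lambda$-far vertices'' hypothesis, then use the triangle inequality for part~(i), and the three-points contradiction with $c_2$ for part~(ii). One simplification for your last paragraph: use the stronger bound $d_G(u,v)\ge d-2\Lambda$ you already derived (rather than the weaker $>d-2\Lambda-1$), and the $(8\lambda+8)$- and $(8\lambda+6)$-separations for big and almost big clusters follow directly from $d>16\lambda+12$ and $d>16\lambda+10$ with no integrality argument needed.
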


\begin{proof} Since the cluster $C$ is bifocal, from the definition of
its cells we conclude that, for any two vertices $u\in C_1$ and
$v\in C_2$, $d_G(u,c_1)\le 4\lambda+2$ and $d_G(v,c_2)\le
4\lambda+2.$ Therefore, $12\lambda+6<{\rm diam}(C)=d_G(c_1,c_2)\le
d_G(c_1,u)+d_G(u,v)+d_G(v,c_2)\le d_G(u,v)+8\lambda+4,$ showing that
$d_G(u,v)>{\rm diam}(C)-2\Lambda-1$ and $d_G(u,v)>4\lambda+2,$
whence $C$ is $({\rm diam}(C)-2\Lambda-1)$-separated as well as
$(4\lambda+2)$-separated. Furthermore, from  $d_G(u,v)\geq {\rm
diam}(C)-8\lambda-4$ we obtain that any big cluster (i.e., a cluster
$C$ with ${\rm diam}(C)>16\lambda+12$) is $(8\lambda+8)$-separated
and any almost big cluster (i.e., a cluster $C$ with
$16\lambda+10<{\rm diam}(C)\leq 16\lambda+12$) is
$(8\lambda+6)$-separated. If $C_1$ contains two vertices $x,y$ with
$d_G(x,y)>4\lambda+2,$ then  the vertices $x,y,$ and $c_2$ are
pairwise $(4\lambda+2)$-far, contradicting the assumption that $C$
is bifocal.
\end{proof}

Given a cluster $C$ located at distance $k$ from the root $s$ and
its son $C'$ (in the tree $\Gamma$), we call the union of $C$ with
the connected component of $G(V\setminus B_k(s))$ containing $C'$
the $CC'$-{\it fiber} of $G$ and denote it by ${\mathcal F}(C,C').$
Note that the son-father relation between clusters that we use here
and in what follows is with respect to tree $\Gamma$.

\begin{lemma} \label{big_cluster1} If a cluster $C$ of a layering
partition ${\mathcal LP}$ of $G$ is big, then $C$ has a son $C'$
which is a bifocal spread cluster such that contracting the four cells of
$C$ and $C'$ (but preserving the edges between different cells), we
will obtain a $2K_2,$ an induced  matching with two edges.
\end{lemma}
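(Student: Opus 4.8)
The plan is to produce the required son $C'$ by a connectivity argument inside $C$, and then to read off the matching structure from the metric separation that ``big'' provides. Since $C$ is big, Lemma~\ref{prop_big_cluster} gives that $C$ is $(8\lambda+8)$-separated: its cells $C_1,C_2$ are disjoint, each has diameter at most $\Lambda=4\lambda+2$, and $d_G(u,v)>8\lambda+8$ for every $u\in C_1$ and $v\in C_2$; in particular no edge of $G$ joins $C_1$ to $C_2$. Let $k=d_G(s,C)$. As $c_1$ and $c_2$ lie in the same cluster $C$, they are joined by a path $\pi$ that avoids $B_{k-1}(s)$. Every vertex of $\pi$ lying in $L^k$ is joined to $c_1$ by a subpath of $\pi$ outside $B_{k-1}(s)$, hence belongs to $C$, i.e.\ to $C_1$ or $C_2$; and since $C_1,C_2$ are non-adjacent, two consecutive vertices of $\pi$ that both lie in $L^k$ lie in the same cell. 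Thus, along $\pi$, the current cell can change only across a maximal subpath of $\pi$ contained in $V\setminus B_k(s)$; each such subpath lies in a single connected component of $G(V\setminus B_k(s))$, and, being flanked by vertices of $C$, its $L^{k+1}$-part is a cluster adjacent to $C$, hence a son of $C$. Since $\pi$ runs from $C_1$ to $C_2$, some such subpath is flanked by a vertex of $C_1$ on one side and a vertex of $C_2$ on the other; take $C'$ to be the son of $C$ carrying it. Then $C'$ is adjacent to both $C_1$ and $C_2$.

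Next I would verify that $C'$ is a bifocal spread cluster with enough metric slack. Pick $p\in C'$ adjacent to $p'\in C_1$ and $q\in C'$ adjacent to $q'\in C_2$; then $d_G(p,q)\ge d_G(p',q')-2>8\lambda+6$, so ${\rm diam}(C')>8\lambda+6>\Lambda$, hence $C'$ is not small, i.e.\ bifocal, and, being adjacent to both cells of its father $C$, it is spread. Moreover ${\rm diam}(C')>8\lambda+6>2\Lambda$, so, using the standing hypothesis that no cluster has three pairwise $\Lambda$-far vertices: (a) the cells $C'_1,C'_2$ of $C'$ are disjoint, since a vertex equidistant from the diametral pair $c'_1,c'_2$ would be at distance $>\Lambda$ from each of them, giving three pairwise $\Lambda$-far vertices; and (b) by Lemma~\ref{bifocal_cluster} each of $C'_1,C'_2$ has diameter at most $2\Lambda$, and in fact every vertex of $C'$ lies within $\Lambda$ of its own cell centre (it lies within $\Lambda$ of $c'_1$ or of $c'_2$, hence of whichever is not farther).

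Finally I would read off the $2K_2$. No cell of $C'$ is adjacent to both $C_1$ and $C_2$: otherwise that cell would contain a vertex adjacent to $C_1$ and a vertex adjacent to $C_2$, at distance $>8\lambda+6>2\Lambda$, exceeding the cell's diameter. Together with spreadness, this forces the cell of $C'$ meeting $C_1$ and the cell meeting $C_2$ to be distinct, so we may label $C'_1$ as the cell adjacent to $C_1$ (and not to $C_2$) and $C'_2$ as the cell adjacent to $C_2$ (and not to $C_1$). Also $C'_1$ is not adjacent to $C'_2$: an edge between them would give $d_G(c'_1,c'_2)\le\Lambda+1+\Lambda=2\Lambda+1<{\rm diam}(C')$, a contradiction; and $C_1$ is not adjacent to $C_2$ as noted. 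Hence, among the four disjoint non-empty cells $C_1,C_2,C'_1,C'_2$, the only adjacent pairs are $\{C_1,C'_1\}$ and $\{C_2,C'_2\}$, so contracting the cells while keeping the inter-cell edges yields precisely a $2K_2$.

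I expect the main obstacle to be the constant bookkeeping in the last two paragraphs: one needs ${\rm diam}(C')$ to strictly exceed $2\Lambda$ — both to force the two cells of $C'$ apart and to beat the upper bound $2\Lambda$ on the diameter of a single cell — and this is exactly where the strength of ``big'' (the $(8\lambda+8)$-separation of $C$, as opposed to the mere $\Lambda$-separation of a generic bifocal cluster) gets used. Extracting the son $C'$ itself is comparatively soft.
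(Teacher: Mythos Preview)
Your proof is correct and follows essentially the same approach as the paper: extract a son $C'$ via a path connecting $C_1$ to $C_2$ outside $B_{k-1}(s)$, use the $(8\lambda+8)$-separation of $C$ to force $C'$ bifocal with its cells matched to those of $C$, and rule out cross-adjacencies by the $2\Lambda$ bound on cell diameters. Your treatment is in fact more careful than the paper's in two places---you spell out why a single son $C'$ can be chosen (the paper just says ``without loss of generality''), and you verify that $C'_1$ and $C'_2$ are non-adjacent, which the paper omits though it is needed for the contracted graph to be an induced $2K_2$.
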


\begin{proof}  Let $C=C_1\cup C_2$ be the partition of $C$ into cells.
Pick $x\in C_1$ and $y\in C_2$ and consider a $xy$ path $P$ in the
subgraph of $G$ induced by $V\setminus B_{k-1}(s),$ where $k$ is the
distance from the root $s$ to all vertices of $C.$ Since $C$ is big,
from Lemma \ref{prop_big_cluster}(i) we conclude that $P$ cannot
entirely lie in $C.$ On the other hand, we can assume, without loss
of generality,  that $C$ has a son $C'$ such that  $P\cap
C'\ne\emptyset$ and $P$ is entirely included in the $CC'$-fiber of
$G.$  Therefore, in each of the cells $C_1$ and $C_2$ one can pick a
vertex which is adjacent to a vertex of $C'.$ Let $a_1b_1$ and
$a_2b_2$ be two edges of $G$ such that $a_1\in C_1, a_2\in C_2,$ and
$b_1,b_2\in C'.$ Since, by Lemma \ref{prop_big_cluster},
$d_G(a_1,a_2)>\Delta+2=8\lambda+8,$ we conclude that
$d_G(b_1,b_2)\ge 8\lambda+6>4\lambda+2=\Lambda,$ thus $C'$ is
bifocal and the vertices $b_1$ and $b_2$ belong to different cells
$C'_1,C'_2$ of $C',$  say $b_1\in C'_1$ and $b_2\in C'_2.$ Suppose
now that $G$ contains an edge $uv$ with $u\in C_1$ and $v\in C'_2.$
Then, by Lemma \ref{prop_big_cluster}(i), we conclude that
$8\lambda+8=\Delta+2<d_G(u,a_2)\le 1+d_G(v,b_2)+1,$ whence
$d_G(v,b_2)>8\lambda+4=2\Lambda.$ Since $v,b_2\in C'_2,$ we obtain a
contradiction with Lemma \ref{bifocal_cluster}. Therefore,
contracting each of the cells $C_1,C_2,C'_1,C'_2$ into a vertex, we
will indeed obtain a $2K_2.$
\end{proof}

\begin{lemma} \label{big_cluster2} If a cluster $C'$ of a layering partition
${\mathcal LP}$ of $G$ is big or almost big, then its father $C$ is bifocal and
the neighbors in $C$ of the centers $c'_1$ and $c'_2$ of the cells
$C'_1$ and $C'_2$ of $C'$ belong to different cells of $C.$ In particular, any big or
almost big cluster is spread.
\end{lemma}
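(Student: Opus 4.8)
The plan is to argue by contradiction along the lines of the proof of Lemma \ref{big_cluster1}, but pulling information from the son $C'$ back up to the father $C$. First I would recall the setup: let $C'$ be big or almost big, located at distance $k+1$ from $s$, so its father $C$ sits in the sphere $L^k$. Pick centers $c'_1\in C'_1$ and $c'_2\in C'_2$ of the two cells of $C'$; by Lemma \ref{prop_big_cluster}(i) applied to $C'$ we have $d_G(c'_1,c'_2)={\rm diam}(C')>16\lambda+10$ and $C'$ is $(8\lambda+6)$-separated. Since every vertex of $C'$ has a neighbor in $L^k$ lying on a shortest path to $s$, choose $x_1\in L^k$ adjacent to $c'_1$ and $x_2\in L^k$ adjacent to $c'_2$. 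The distance estimate $d_G(x_1,x_2)\ge d_G(c'_1,c'_2)-2>16\lambda+8>4\lambda+2=\Lambda$ forces $x_1$ and $x_2$ into the same cluster $C$ of $L^k$ (they are joined by a path through $C'$, hence outside $B_{k-1}(s)$, so they lie in a common cluster) and moreover shows that $C$ is bifocal — indeed $C$ contains two $\Lambda$-far vertices, so it cannot be small, and it cannot contain three pairwise $(4\lambda+2)$-far vertices by our standing assumption. This gives the first assertion that $C$ is bifocal, and it identifies $x_1,x_2$ as lying in the two distinct cells $C_1,C_2$ of $C$ (two points more than $\Lambda$ apart cannot share a cell, since cells of a bifocal cluster have diameter $\le 2\Lambda$ only, and in fact here diameter considerations via Lemma \ref{bifocal_cluster} separate them).

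For the main assertion — that the $C$-neighbors of $c'_1$ and of $c'_2$ land in different cells of $C$ — I would take \emph{arbitrary} such neighbors rather than the specific $x_1,x_2$ chosen above, and suppose for contradiction that some neighbor $u_1\in C$ of $c'_1$ and some neighbor $u_2\in C$ of $c'_2$ lie in the \emph{same} cell of $C$, say $C_1$. Then $d_G(u_1,u_2)\le {\rm diam}(C_1)\le\Lambda=4\lambda+2$ by Lemma \ref{prop_big_cluster}(ii) (note ${\rm diam}(C)\ge{\rm diam}(C')-?$ is large enough — one must check $C$ itself satisfies the hypothesis ${\rm diam}(C)>12\lambda+6$ of Lemma \ref{prop_big_cluster}, which follows because $C$ contains $x_1,x_2$ at distance $>16\lambda+8$). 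But then the triangle inequality gives
\[
d_G(c'_1,c'_2)\le d_G(c'_1,u_1)+d_G(u_1,u_2)+d_G(u_2,c'_2)\le 1+(4\lambda+2)+1=4\lambda+4,
\]
contradicting $d_G(c'_1,c'_2)={\rm diam}(C')>16\lambda+10$. Hence the neighbors of $c'_1$ and $c'_2$ in $C$ must lie in different cells of $C$.

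Finally, to conclude that $C'$ is spread, recall the definition: $C'$ is spread if \emph{both} cells $C_1,C_2$ of its father $C$ are adjacent to $C'$. By construction $c'_1$ has a neighbor in $C$ lying in one cell and $c'_2$ has a neighbor in $C$ lying in the other cell, so both cells of $C$ send an edge into $C'$; therefore $C'$ is spread. The step I expect to require the most care is the bookkeeping that verifies $C$ meets the numerical hypotheses of Lemmas \ref{bifocal_cluster} and \ref{prop_big_cluster} — in particular confirming ${\rm diam}(C)>12\lambda+6$ so that part (ii) (cells of diameter $\le\Lambda$) is available, and making sure the $\pm 1$'s from the two connecting edges are handled correctly; the geometric idea itself is just the same "pull the separation back one layer" argument used in Lemma \ref{big_cluster1}.
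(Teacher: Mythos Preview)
Your proof is correct and follows the same core idea as the paper's: bound $d_G(c'_1,c'_2)$ via the triangle inequality through neighbors $z_1,z_2\in C$, and derive a contradiction with ${\rm diam}(C')>16\lambda+10$. The paper's version is shorter because it invokes Lemma~\ref{bifocal_cluster} (cells of \emph{any} bifocal cluster have diameter $\le 2\Lambda$) directly, giving $d_G(c'_1,c'_2)\le 2\Lambda+2=8\lambda+6<16\lambda+10$; this sidesteps your detour of first checking ${\rm diam}(C)>12\lambda+6$ in order to access Lemma~\ref{prop_big_cluster}(ii), so the ``careful bookkeeping'' you flagged as the delicate step is in fact unnecessary.
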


\begin{proof} Let $z_1$ and $z_2$ be two neighbors of $c'_1$ and $c'_2,$
respectively,   in $C.$ If $C$ is not bifocal, then $d_G(z_1,z_2)\le
4\lambda+2,$  whence $d_G(c'_1,c'_2)\le 4\lambda+4<16\lambda+10,$
contrary to the assumption that $C'$ is big or almost big. Thus, $C$
is bifocal. If $z_1$ and $z_2$ belong to the same cell $C_1$ of $C,$
then $d_G(z_1,z_2)\le 2\Lambda$, by Lemma \ref{bifocal_cluster}, and
therefore $d_G(c'_1,c'_2)\le 2\Lambda+2<16\lambda+10,$ leading to
the same contradiction as before.
\end{proof}

\begin{lemma} \label{big_cluster3} If a cluster $C$ of a layering partition ${\mathcal LP}$
of $G$ is big, then no son $C'$ of $C$ has a cell adjacent to both cells of $C.$
In particular, no big cluster $C$ has a small son adjacent to both cells of $C$.
\end{lemma}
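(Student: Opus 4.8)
The plan is to argue by contradiction: suppose $C$ is a big cluster with cells $C_1,C_2$ and some son $C'$ has a cell $C'_j$ adjacent to both $C_1$ and $C_2$. Pick edges $a_1b_1$ and $a_2b_2$ with $a_1\in C_1$, $a_2\in C_2$, and $b_1,b_2\in C'_j$. By Lemma \ref{bifocal_cluster} (applied to $C'$, if $C'$ is bifocal) or by the small-cluster diameter bound, the diameter of the single cell $C'_j$ is at most $2\Lambda$, so $d_G(b_1,b_2)\le 2\Lambda=8\lambda+4$. Then the triangle inequality gives $d_G(a_1,a_2)\le d_G(a_1,b_1)+d_G(b_1,b_2)+d_G(b_2,a_2)\le 1+(8\lambda+4)+1=8\lambda+6$. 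On the other hand, $C$ is big, so by Lemma \ref{prop_big_cluster} it is $(8\lambda+8)$-separated, which forces $d_G(a_1,a_2)>8\lambda+8$ for $a_1\in C_1$, $a_2\in C_2$. This contradiction proves the first assertion.

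For the second assertion, note that a small son $C'$ has a unique cell, namely all of $C'$, so the first part applied with $C'_j=C'$ immediately says $C'$ cannot be adjacent to both cells of $C$.

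The one point that needs care is the diameter bound on $C'_j$ when $C'$ is not small. If $C'$ is bifocal, then $C'_j$ is one of its two cells and Lemma \ref{bifocal_cluster} gives $\operatorname{diam}(C'_j)\le 2\Lambda$ directly. If $C'$ is small, then $\operatorname{diam}(C')\le\Lambda<2\Lambda$. Either way the bound $d_G(b_1,b_2)\le 2\Lambda$ holds, so the numerical gap between $8\lambda+6$ and $8\lambda+8$ is the crux of the argument. I expect this is the only obstacle; everything else is a direct application of the separation bound from Lemma \ref{prop_big_cluster} together with the cell-diameter bound from Lemma \ref{bifocal_cluster}.
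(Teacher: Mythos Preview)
Your proof is correct and essentially identical to the paper's: both argue by contradiction, pick two edges from the common cell $C'_j$ into $C_1$ and $C_2$, bound the distance between the endpoints in $C'_j$ by $2\Lambda$ via Lemma~\ref{bifocal_cluster}, and contradict the $(8\lambda+8)$-separation of the big cluster $C$ from Lemma~\ref{prop_big_cluster}. If anything, your version is slightly more careful in explicitly handling the case where $C'$ is small (where Lemma~\ref{bifocal_cluster} does not literally apply but the diameter bound $\Lambda<2\Lambda$ holds trivially).
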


\begin{proof} Let $C_1,C_2$ be the cells of $C.$ Suppose, by
way of contradiction, that two vertices $x',y'$ from the same  cell
of $C'$ are adjacent to vertices $x\in C_1$ and $y\in C_2,$
respectively. Then, by Lemma \ref{bifocal_cluster},  $d_G(x,y)\le
1+d_G(x',y')+1\le 8\lambda+4+2=8\lambda+6<\Delta+2,$  contrary to
the fact that, according to Lemma \ref{prop_big_cluster}, the
cluster $C$ is $(\Delta+2)$-separated.
\end{proof}

\subsection{The algorithm}
We continue with the description of an algorithm which, for an input
graph $G$ and a current value of ``the optimal distortion''
$\lambda,$ either establishes that no embedding with distortion $\le
\lambda$ of $G$ into an outerplanar metric exists or returns such an
embedding but with distortion at most $100\lambda+75$. Namely, given
a  value of $\lambda$ such that all clusters  of a layering
partition ${\mathcal LP}$ of $G$ contain at most two
$(4\lambda+2)$-far vertices,  if some cluster   of ${\mathcal LP}$
has two big sons or if this cluster is big and has two spread  sons,
then any embedding of $G$ in a $K_{2,3}$-minor free graph requires
distortion $>\lambda,$ and the algorithm returns the answer ``not".
Otherwise, if each cluster has at most one big son and each big
cluster has at most one spread son, then the algorithm constructs an
outerplanar graph $G'=(V,E').$ Then setting $w:=20\lambda+15$ as the
length of each edge of $G',$ the inequality $d_G(x,y)\le
d_{G'}(x,y)\le 5wd_G(x,y)$ holds for any two vertices $x,y$ of $V.$
To construct $G',$ the algorithm proceeds the clusters of ${\mathcal
LP}$ level by level in increasing order. To ensure that the
resulting graph $G'$ is outerplanar and the distortion of the
embedding of $G$ into $G'$ is bounded, we need to precise how the
algorithm ``opens" and ``closes" the cycles of $G'$, without allowing
cycles to  ``branch" and without incurring larger and larger
distortion. Roughly speaking, small and medium clusters of
${\mathcal LP}$ are used only to open or close cycles of $G'$ or to
build tree-components of $G'.$ Big clusters of ${\mathcal LP}$
are used to build-up the cycles of $G':$ each cycle $C$ of $G'$
starts and ends with vertices lying in small or medium clusters, all
other vertices of $C$ are pairs of centers of cells of big clusters
all lying in the same fiber. The remaining vertices of each cell of
a big cluster are made adjacent in $G'$ to the neighbor in $C$ of
the center of this cell; for an illustration, see Fig.
\ref{fig_algo}. Note that not every outerplanar graph can
occur as $G'$ returned by the algorithm because the cycles of $G'$
all have even length and  the 2-connected components of $G'$ are
edges or cycles. Moreover, no two cycles of $G'$ have a common origin.
The precise local  rules of constructing $G'$ are
provided in lines 3-7 of the algorithm described below.

\begin{figure}[t]
\begin{center}
\scalebox{0.5}
{\input{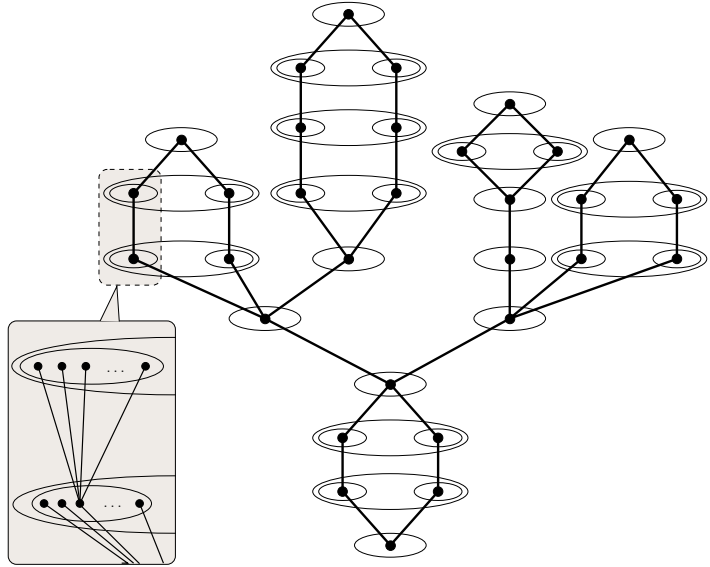}}
\end{center}
\caption{An outerplanar graph produced by the algorithm  {\sc Approximation by Outerplanar Metric}}
\label{fig_algo}
\end{figure}

\bigskip
\begin{center}
\framebox{
\parbox{14cm}{
\vspace{0.05cm}
\noindent{\bf Algorithm} {\sc Approximation by Outerplanar Metric}\\
{\footnotesize
  {\bf Input:} A graph $G=(V,E),$ a layering partition  ${\mathcal LP}$ of $G,$  and a value $\lambda$\\
  {\bf Output:} An outerplanar graph $G'=(V,E')$ or an answer ``not"\\
  \begin{tabular}[t!]{l@{ }p{14cm}}
  1. & {\bf For} each cluster $C$ of the layering partition ${\mathcal LP}$ {\bf do}\\
  3. & \begin{tabular}[t]{@{}p{3mm}@{}p{14cm}}& {\bf If} $C$ has two big sons or  $C$ is big and has two spread sons, {\bf then return} the answer ``not".\end{tabular}\\
  3. & \begin{tabular}[t]{@{}p{8mm}@{}p{11cm}} & {\bf Else for} each son $C'$ of  $C$ {\bf do}\end{tabular}\\
  4. & \begin{tabular}[t]{@{}p{13mm}@{}p{11cm}} &  {\sf Case 1:} {\bf
  If} $C'$ is small, {\bf then} pick in $C$ the center $c$ of a cell
  of $C$ adjacent to $C'$ and in $G'$ make   $c$ adjacent to all vertices of $C'.$\end{tabular}\\
  5. & \begin{tabular}[t]{@{}p{13mm}@{}p{11cm}} & {\sf Case 2:} {\bf
  If} $C'$ is medium and $C$ is not big, or $C'$ is medium and not
  spread and $C$ is big, {\bf then} pick in $C$ the center $c$ of a
  cell of $C$ adjacent to $C'$ and in $G'$ make    $c$ adjacent to all vertices of $C'.$\end{tabular}\\
  6. & \begin{tabular}[t]{@{}p{13mm}@{}p{11cm}} & {\sf Case 3:} {\bf
  If} $C'$ is medium, $C$ is  big, and $C'$ is the (unique) spread son
  of $C,$ {\bf then} in $G'$ make   the center $c_1$ of cell $C_1$ of
  $C$ adjacent to all vertices of $C'.$    Additionally, make  the
  center $c_2$ of  cell $C_2$ of $C$ adjacent to every vertex of $C'.$ \end{tabular}\\
  7. & \begin{tabular}[t]{@{}p{13mm}@{}p{11cm}} & {\sf Case 4:} {\bf
  If} $C'$ is big with  cells $C'_1,C'_2,$ such that $C'_1$ is
  adjacent to $C_1$ and $C'_2$ is adjacent to $C_2,$ where $C_1$ and
  $C_2$ are the cells of $C$ with centers $c_1$ and $c_2,$ {\bf then}
  in $G'$  make   $c_1$ adjacent to all vertices of $C'_1$ and   $c_2$
  adjacent to all vertices of $C'_2.$  \end{tabular}\\
  \end{tabular}}
\vspace{-0.3cm}\\ }}\hspace{0.2cm}
\end{center}
\medskip

\subsection{Correctness of the algorithm}

Now, we formulate the main results establishing the correctness and the approximation ratio of our algorithm.
The proofs will be provided in remaining subsections of this section.

\begin{theorem} \label{outerplanar} Let $G=(V,E)$ be an input graph and let $\lambda\ge 1.$ If the algorithm {\sc Approximation
by Outerplanar Metric} returns the answer ``not'', then any
embedding of $G$ into a $K_{2,3}$-minor free graph requires
distortion $>\lambda.$ Otherwise, if the algorithm returns the
outerplanar graph $G'=(V,E'),$ then uniformly assigning to its edges
weight $w:=20\lambda+15,$ we obtain an embedding of $G$ to $G'$ such
that $d_G(x,y)\le d_{G'}(x,y)\le 5wd_G(x,y)$ for any two vertices
$x,y$ of $V.$ As a result, we obtain a factor $100\lambda+75$
approximation of the optimal distortion of embedding a graph
distance into an outerplanar metric.
\end{theorem}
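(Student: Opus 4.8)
The plan is to split Theorem~\ref{outerplanar} into its two halves and handle each using the structural lemmas already developed in Subsections on small/medium/big clusters.

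\medskip

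\textbf{The ``not'' case.} Suppose the algorithm returns ``not'' at some cluster $C$. There are two sub-cases by line~3. If $C$ has two big sons $C'$ and $C''$, I would invoke Lemma~\ref{big_cluster2}: each of $C',C''$ is spread, so both cells of $C$ are adjacent to $C'$ and both are adjacent to $C''$; moreover each big son is itself bifocal and $(8\lambda+8)$-separated by Lemma~\ref{prop_big_cluster}. The idea is to build an $\alpha$-metric relaxed $K_{2,3}$-minor (with $\alpha=\lambda$) inside the $CC'$- and $CC''$-fibers: use the two cells of $C'$ together with the two cells of $C''$ as four of the five ``branch'' sets and the two cells of $C$ (together with the rest of $G$ below, as in the proof of Proposition~\ref{K_2,r}) as the two degree-$3$ vertices; the connecting paths run through the fibers. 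The separation hypotheses guarantee that the pieces are pairwise $\lambda$-far, so conditions (ii$^+$),(iv$^+$),(v$^+$) hold. Then Proposition~\ref{metric-relaxed-minor} (applicable since $K_{2,3}$ is subdivided, $2$-connected, triangle-free) gives distortion $>\lambda$ for any embedding into a $K_{2,3}$-minor free graph. The second sub-case, $C$ big with two spread sons $C',C''$, is essentially the same argument but now one also uses Lemma~\ref{big_cluster1} / Lemma~\ref{big_cluster3} to control how the cells of the spread sons attach to the cells of $C$, again producing the five branch sets and two hubs of a $\lambda$-metric relaxed $K_{2,3}$.

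\medskip

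\textbf{The embedding case.} Now suppose no cluster triggered line~3, so each cluster has at most one big son and each big cluster has at most one spread son, and the algorithm outputs $G'$. I would first check \emph{outerplanarity of $G'$}: by the local rules in Cases~1--4, every vertex of a small/medium cluster is attached (as a pendant or via one new edge) to a single center in its father, so such clusters only open new cycles or extend trees; only big clusters contribute the ``parallel'' pair of center-to-center edges that continue a cycle (Case~4), and since each cluster has $\le 1$ big son and each big cluster $\le 1$ spread son, these cycles form a laminar family that never branches. Hence each $2$-connected component of $G'$ is a single edge or a single (even) cycle, which is outerplanar, and a tree of such blocks is outerplanar. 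Next, \emph{non-contraction}: by Lemma~\ref{distortion_edge2} it suffices to show $d_{G'}(\varphi(u),\varphi(v))\ge d_G(u,v)$ for every edge $\varphi(u)\varphi(v)$ of $G'$; since $G'$ is $w$-uniformly weighted with $w=20\lambda+15$, each $G'$-edge has length $w$, and one checks that adjacent-in-$G'$ vertices are at $G$-distance $\le w$ (they lie in the same cluster or in adjacent clusters, whose diameters are bounded by the small/medium/big classification, at most $16\lambda+12\le w$, plus $O(1)$ for crossing a layer). Finally, \emph{bounded expansion}: by Lemma~\ref{distortion_edge1} it suffices to bound $d_{G'}(\varphi(u),\varphi(v))$ for every edge $uv$ of $G$; the key point is that endpoints of a $G$-edge lie in the same cluster or in father--son clusters, and chasing the construction shows they are joined in $G'$ by a path of at most $5$ edges (e.g.\ center--to--center within a cycle, plus pendant hops at the two ends), giving $d_{G'}\le 5w$. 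Combining, $d_G(x,y)\le d_{G'}(x,y)\le 5w\,d_G(x,y)$ for all $x,y$, i.e.\ distortion $5w=100\lambda+75$. Running the outer search over candidate values of $\lambda$ and returning the smallest $\lambda$ for which the algorithm does not answer ``not'' yields the claimed factor-$(100\lambda+75)$ approximation (since the ``not'' answer certifies $\lambda^*>\lambda$).

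\medskip

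\textbf{Main obstacle.} The routine parts are the two applications of Lemmas~\ref{distortion_edge1}--\ref{distortion_edge2} and the outerplanarity bookkeeping. The hard part will be the expansion bound: one must verify, Case by Case of the algorithm, that the images of the two endpoints of \emph{every} $G$-edge are within $5$ hops in $G'$ — in particular that a $G$-edge landing between a big cell and a cell of a neighboring (possibly medium or small) cluster does not get ``stretched'' by having to travel around a long cycle. This requires the separation properties of Lemma~\ref{prop_big_cluster} (big and almost-big clusters are $\Delta$-separated, $\Delta=8\lambda+6$) together with Lemmas~\ref{big_cluster1}--\ref{big_cluster3} to guarantee that whenever two cells are forced into the same cycle, the corresponding $G$-vertices are already far apart, so the constant $5$ (hence $5w$) suffices and cannot be improved by a cheaper in-$G'$ route being unavailable. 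Pinning down that the worst case is exactly $5$ edges — two pendant hops at each end and one center-to-center edge, or the analogous configuration through a spread medium son in Case~3 — is the crux of the constant $100\lambda+75$.
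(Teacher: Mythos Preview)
Your embedding half is essentially the paper's argument: the paper packages the two edge-checks (every $G'$-edge has $d_G$-length $\le w$, every $G$-edge has $d_{G'}$-length $\le 5w$) into Proposition~\ref{property_edge_G} and then applies Lemmas~\ref{distortion_edge1}--\ref{distortion_edge2} exactly as you do; your identification of the $5$-hop bound as the crux, and of Case~3 (big cluster with a spread medium son) as the configuration realizing it, is accurate.

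Your ``not'' half has the right global plan (exhibit a $\lambda$-metric relaxed $K_{2,3}$ and invoke Proposition~\ref{metric-relaxed-minor}) but the concrete construction you sketch does not work. You propose to take ``the two cells of $C'$ together with the two cells of $C''$ as four of the five branch sets and the two cells of $C$ \ldots\ as the two degree-$3$ vertices'': that is six pieces for a five-vertex graph, and more seriously, cells of a son are \emph{adjacent} in $G$ to cells of the father, so they are at $G$-distance $1$, not $\lambda$-far --- condition~(ii$^+$) fails immediately. The separation lemmas you cite bound distances \emph{between the two cells of a single cluster}, not between a cell and its father's cell.

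The paper's construction (Proposition~\ref{two_fibers}) is genuinely different and more delicate. It does not use the cells as branch sets at all. Instead it takes, in each of the two fibers ${\mathcal F}(C,C')$ and ${\mathcal F}(C,C'')$, a shortest path $P',P''$ connecting the two cells of $C$, and uses the \emph{midpoints} $w',w''$ of these paths as two of the degree-$2$ vertices of $K_{2,3}$; the third degree-$2$ vertex is a ball $S$ deep below $C$. The two degree-$3$ vertices $Q',Q''$ are unions of half-paths (the portions of $P',P''$ on each side of the midpoints) glued together with short connectors through $C_1$ (resp.\ $C_2$) and short descents toward $s$. The $\lambda$-farness then comes from the fact that $C$ is $\Delta$-separated (so $P',P''$ are long, forcing their midpoints far from $C$) and from repeated shortcut arguments against the minimality of $P',P''$ (Lemmas~\ref{paths_P'P''}--\ref{setsQ'Q''}). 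This midpoint-of-fiber-path idea is the missing ingredient in your sketch.
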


The proof of this theorem is subdivided into two propositions.
We start with a technical result, essentially showing that in both cases when our algorithm returns
the answer ``not", any embedding of $G$ into an outerplanar metric
requires distortion $>\lambda$:

\begin{proposition} \label{two_fibers} Let $C$ be  a big or an almost big cluster  having two sons $C',C''$ such
that  the two cells of $C$ can be connected in both $CC'$- and
$CC''$-fibers of $G.$ Then,  any embedding of $G$ in a
$K_{2,3}$-minor free graph requires distortion $>\lambda.$ These conditions are fulfilled
in the following two cases: (i) the cluster $C$ is big and has two spread sons; (ii) $C$ has two big sons $C',C''.$
\noindent
In particular, if the algorithm  returns the answer ``not", then any embedding of $G$ in  a $K_{2,3}$-minor free graph requires
distortion $>\lambda.$
\end{proposition}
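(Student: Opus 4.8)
The plan is to build an $\alpha$-metric relaxed $K_{2,3}$-minor in $G$ (with $\alpha=\lambda$) directly from the combinatorial configuration described in the hypothesis, and then invoke Proposition \ref{metric-relaxed-minor}: since $K_{2,3}$ is subdivided, $2$-connected and triangle-free, the presence of such a minor forces distortion $>\lambda$ for any embedding into a $K_{2,3}$-minor free graph. Concretely, let $C$ be big or almost big, sitting at distance $k$ from the root $s$, with cells $C_1,C_2$ centered at $c_1,c_2$. Let $P'$ (resp. $P''$) be a path connecting $C_1$ to $C_2$ inside the $CC'$-fiber (resp. the $CC''$-fiber), i.e. outside $B_{k-1}(s)$. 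The idea is to use $\mu(w):=B_{k-2\lambda-2}(s)$ as one degree-$3$ vertex (exactly as in Proposition \ref{K_2,r}), and to obtain the three ``branches'' of the $K_{2,3}$ as follows: two of them come from BFS-tree paths of length $\lambda+1$ descending from $c_1$ and from $c_2$ towards $s$ (together with a short path inside $C$ joining $c_1,c_2$, or rather routing through a common high vertex), and the third branch comes from going from one cell, along $P''$, into the other cell — so that the two fibers give two internally disjoint ``$c_1$-to-$c_2$'' routes above the ball, while the descending tree paths give the route down to $w$. One must be a little careful about which vertex plays the second degree-$3$ vertex; the natural choice is to take a vertex realizing the connection between the $c_1$-branch and the $c_2$-branch near level $k$, using that big/almost big clusters are $\Delta$-separated (Lemma \ref{prop_big_cluster}) so that the two cells stay $>\Delta=8\lambda+6$ apart, while each cell has diameter $\le\Lambda=4\lambda+2$ (Lemma \ref{bifocal_cluster}).

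The key verification is that this $\mu$ satisfies (i),(iii) (immediate from construction, as in Proposition \ref{K_2,r}) and the metric conditions (ii$^+$),(iv$^+$),(v$^+$) with parameter $\lambda$. For (ii$^+$): the two branch-meeting vertices live near levels $k$ and $k-2\lambda-2$ respectively, the intermediate branch vertices $u^*_i$ live on level $k-\lambda-1$, so pairwise $\lambda$-farness of the $\mu$-images of the five vertices of $K_{2,3}$ follows from the layering, exactly as in Proposition \ref{K_2,r}. The genuinely new point compared to Proposition \ref{K_2,r} is ruling out unwanted proximity \emph{between the two fiber paths} $P'$ and $P''$ and between a fiber path and the descending branches: here I would use that the two fibers are, by construction, subgraphs of distinct connected components of $G(V\setminus B_{k-1}(s))$ once we delete the shared cluster $C$ itself, so outside $C$ the paths $P'$ and $P''$ are vertex-disjoint; the $(4\lambda+2)$-separation of the cells then forces the relevant $\lambda$-farness via the triangle inequality, in the same style as the final paragraph of the proof of Proposition \ref{K_2,r} (if two points on the two paths were within $\lambda$, one derives $d_G(c_1,c_2)\le$ something $\le 16\lambda+12$-ish contradicting the almost-big lower bound $d_G(c_1,c_2)>16\lambda+10$, or contradicting $\Delta$-separation). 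I expect this separation-between-the-two-fiber-branches estimate to be the main obstacle, precisely because it is the one piece with no analogue in Proposition \ref{K_2,r}: one has to choose the two ``$c_1\to c_2$ above the ball'' branches as sub-paths of $P'$ and $P''$ that avoid $C$ except at their endpoints, and then check all cross-distances.

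Finally, I would dispatch the two concrete cases. For (ii), $C$ has two big sons $C',C''$: by Lemma \ref{big_cluster1} (applied to each big son) each of $C',C''$ is a bifocal spread son of $C$, so each fiber $CC'$ and $CC''$ already contains an edge from $C_1$ and an edge from $C_2$ to that son, hence a path connecting $C_1$ and $C_2$ through the son, i.e. inside the fiber — this is exactly the hypothesis of the first part. (One should note $C$ big implies $C$ big or almost big trivially.) For (i), $C$ is big with two spread sons $C',C''$: by definition of ``spread'', both cells $C_1,C_2$ of $C$ are adjacent to $C'$, hence one can connect $C_1$ to $C_2$ within the $CC'$-fiber (go from $C_1$ into $C'$, traverse $C'$, come back into $C_2$ — using that $C'$ is connected since it is a single connected piece at its own level via the layering, and the component of $G(V\setminus B_k(s))$ containing $C'$ is connected by definition of the fiber); symmetrically for $C''$. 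So again the hypothesis of the first part holds, and the proposition follows. The last sentence (``if the algorithm returns `not' then \ldots'') is then immediate, since the algorithm returns ``not'' in line 3 exactly in cases (i) and (ii).
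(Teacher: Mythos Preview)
Your high-level plan (build a $\lambda$-metric relaxed $K_{2,3}$ and invoke Proposition~\ref{metric-relaxed-minor}) is exactly right, and your treatment of cases (i) and (ii) is close to correct. But the actual construction of $\mu$ you sketch does not give a $K_{2,3}$. You declare $\mu(w):=B_{k-2\lambda-2}(s)$ to be a degree-$3$ vertex ``exactly as in Proposition~\ref{K_2,r}''; however, that proposition needs \emph{three} pairwise $(4\lambda+2)$-far vertices in the cluster to produce three disjoint descending branches to the ball. Here $C$ is only bifocal, so you get just two descending branches (from $c_1$ and $c_2$). Your proposed ``third branch'' via $P''$ lives entirely above level $k$ and never reaches the ball, so it cannot be an edge incident to $\mu(w)$. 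No relabelling fixes this: with the ball as a degree-$3$ vertex there is simply no third path down to it.

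The paper's construction is genuinely different and worth internalizing. View the configuration as a theta-graph with three $C_1$--$C_2$ arcs: the path $P'$, the path $P''$, and the arc that descends from $C_1$ by a BFS path, passes through the ball $S=B_{k-4\lambda-1}(s)$, and ascends back to $C_2$. The two degree-$3$ vertices of $K_{2,3}$ are then placed near $C_1$ and $C_2$ (each is a connected set built from the two half-paths of $P',P''$ on that side, a short path inside the cell, and the descending BFS segment of length $3\lambda$), while the three degree-$2$ vertices are the \emph{midpoints} $w',w''$ of $P',P''$ and the ball $S$. The edge-images are the length-$(\lambda+1)$ subpaths of $P',P''$ around their midpoints and the final length-$(\lambda+1)$ BFS segments entering $S$. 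The key fact making this work is that $C$ is $\Delta$-separated with $\Delta=8\lambda+6$, so each $P',P''$ has length $>8\lambda+6$ and hence its midpoint sits at distance $\ge 4\lambda+3$ from $C$; this is what gives enough room for the $\lambda$-farness conditions. Verifying (ii$^+$),(iv$^+$),(v$^+$) is not a one-line triangle-inequality affair: the paper spends seven lemmas on it, including a nontrivial minimality argument (choose $P',P''$ shortest in their fibers) to rule out shortcuts between the two halves of the same fiber path.

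Two small points on your case analysis. In (ii) you need that $C$ itself is big or almost big; this is not assumed but follows since a big son $C'$ has ${\rm diam}(C')>16\lambda+12$ and its cell centers have neighbors in different cells of $C$ (Lemma~\ref{big_cluster2}), forcing ${\rm diam}(C)>16\lambda+10$. Also, the spreadness of big sons comes from Lemma~\ref{big_cluster2}, not Lemma~\ref{big_cluster1}.
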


Now suppose that the algorithm returns the graph $G'.$   We continue
 with the basic property of the graph $G'$ allowing us to analyze the
 approximation ratio of the
algorithm. First notice that, by construction, $G'$ is outerplanar.
Denote by  $d_{G'}(x,y)$ the distance in $G'$ between two vertices
$x$ and $y,$ where each edge of $G'$ has length $w:=20\lambda+15.$

\begin{proposition} \label{property_edge_G} For each edge $xy$ of the graph $G,$ the vertices $x$ and $y$ can be connected in the graph $G'$
by a path consisting of at most 5 edges, i.e. $d_{G'}(x,y)\le 5w$. Conversely, for each edge $xy$ of the graph $G',$ we have $d_G(x,y)\le 20\lambda+15.$
\end{proposition}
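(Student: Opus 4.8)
The plan is to establish the two halves separately, each by a case analysis on the types (small, medium, big, almost big) of the clusters involved and on which of the four cases of the algorithm attached the relevant son cluster to its father. Throughout I would use one elementary fact about the layering partition: since the layering tree $\Gamma$ is a tree, every vertex $z$ of a son $C'$ of a cluster $C$ has a neighbour $z''$ in $C$ itself --- a BFS-neighbour of $z$ in the previous sphere lies in the father of $C'$, which is $C$. If moreover $C'$ is adjacent to only one cell $C_a$ of a bifocal father $C$ (which by Lemma~\ref{big_cluster3} holds whenever $C$ is big and $C'$ is small or not spread), then $z''\in C_a$; and by the induced-matching property of Lemma~\ref{big_cluster1} the cell of $C'$ containing $z$ is determined by the cell of $C$ containing $z''$.

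For the converse direction, each edge of $G'$ joins the centre $c$ of a cell of a cluster $C$ to a vertex $z$ of a son $C'$ of $C$ (or of a cell of $C'$, in Case~4), so $d_G(c,z)\le d_G(c,z'')+1$ with $z''$ a $C$-neighbour of $z$, and it suffices to bound $d_G(c,z'')$ by a cluster- or cell-diameter. Here I would use that a small cluster has diameter $\le\Lambda=4\lambda+2$, a medium one diameter $\le 16\lambda+12$, an arbitrary cell of a bifocal cluster diameter $\le 2\Lambda$ (Lemma~\ref{bifocal_cluster}), and a cell of a big or an almost big cluster diameter $\le\Lambda$ (Lemma~\ref{prop_big_cluster}). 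In Cases~1 and 2 with $C$ not big, $c$ and $z''$ lie in one cluster, so $d_G(c,z)\le\mathrm{diam}(C)+1\le 16\lambda+13$; in Cases~1 and 2 with $C$ big, Lemma~\ref{big_cluster3} puts $z''$ in the cell $C_a$ of $c$, so $d_G(c,z)\le\mathrm{diam}(C_a)+1\le\Lambda+1$; in Cases~3 and 4 with $C$ big, Lemma~\ref{big_cluster1} forces $z''$ into the cell of $C$ whose centre is the endpoint $c$ of the edge, again giving $d_G(c,z)\le\Lambda+1$, while if the father in Case~4 is only medium the crude bound $\mathrm{diam}(C)+1\le 16\lambda+13$ already suffices. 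In every case $d_G(c,z)\le 20\lambda+15=w$.

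For the forward direction, the endpoints of an edge $xy$ of $G$ lie either in one cluster or in a father-son pair $C_y\supset C_x$. When they lie in one cluster $C$, the separation statements of Lemma~\ref{prop_big_cluster} (a big cluster is $(8\lambda+8)$-separated, an almost big one $(8\lambda+6)$-separated, and a medium cluster of diameter $>12\lambda+6$ is cell-separated, so none has an edge joining its two cells) put $x$ and $y$ in the same cell of $C$ unless $C$ is a medium cluster of small diameter; and since some centre of a cell of the father of $C$ is $G'$-adjacent to that whole cell, $d_{G'}(x,y)\le 2w$, the leftover sub-case reducing to the one below. In the father-son case, $x$ is $G'$-adjacent to the centre $c$ of the cell of $C_y$ carrying $x$'s cell, and $y$ is reached from $c$ either directly via a centre of a cell of the father of $C_y$ (two further edges) when $y$ lies in the same $C_y$-cell as $c$, or --- in the configuration where $x$ and $y$ attach to opposite cells of $C_y$ --- by climbing one more level and crossing at the vertex where the two sides of the surrounding $G'$-cycle meet, a path of at most five edges. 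The main obstacle is to verify that this last configuration is the worst possible, i.e. that $x$ and $y$ can never be forced onto $G'$-cycle positions farther apart than that; this is where Lemmas~\ref{big_cluster1}, \ref{big_cluster2} and \ref{big_cluster3} are used --- a big cluster has a unique spread son carrying the induced-matching structure, its father is bifocal with the cells matching up, and a medium spread son sits at the closing end of a $G'$-cycle rather than extending it --- so that opposite-cell attachments can occur only at the bottom of a cycle. Granting this, $d_{G'}(x,y)\le 5w$, and the proposition follows.
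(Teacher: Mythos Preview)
Your converse direction contains a genuine error in Case~3, and it is precisely the case that determines the constant $20\lambda+15$. When $C$ is big and $C'$ is its (medium) spread son, the algorithm makes \emph{both} centres $c_1$ and $c_2$ adjacent in $G'$ to \emph{every} vertex of $C'$. So for a $G'$-edge $c_1z$ with $z\in C'$, the $G$-neighbour $z''\in C$ of $z$ need not lie in $C_1$: if $z$ sits in the cell of $C'$ that is adjacent (in $G$) to $C_2$, then $z''\in C_2$, and since $C$ is $(8\lambda+8)$-separated your bound $d_G(c_1,z)\le\Lambda+1$ fails outright. Lemma~\ref{big_cluster1} does not help here: it asserts the existence of a bifocal spread son with a $2K_2$ cell structure, but it says nothing that would force the $G$-neighbour of an arbitrary $z\in C'$ into the cell of the particular centre $c$ you chose. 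The paper handles this case by a different route: since $C'$ is spread, $C_1$ contains a vertex with a $G$-neighbour $v\in C'$, and one reaches $z$ from $c_1$ through that vertex and then across $C'$, giving $d_G(c_1,z)\le\mathrm{diam}(C_1)+1+\mathrm{diam}(C')\le(4\lambda+2)+1+(16\lambda+12)=20\lambda+15$. This is the bottleneck, and it is why $w=20\lambda+15$.

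Your forward direction is broadly along the paper's lines, but the five-edge path you describe (``climbing one more level and crossing at the vertex where the two sides of the surrounding $G'$-cycle meet'') goes in the wrong direction. In the problematic configuration --- $C_y$ big, $C_x$ its medium spread son, $y$ in one cell of $C_y$ and your attachment point $c$ in the other --- the cycle closes \emph{below}, at $C_x$; the father $C_0$ of $C_y$ may itself be big, in which case climbing through $C_0$ does not bring the two sides together in two steps. The paper's five-edge path instead uses the Case~3 edges to cross at $C_x$: $x\to c_1\to u\to c_2\to x_{C_0}\to y$ for some $u\in C_x$. (In fact, since in Case~3 your vertex $x$ is $G'$-adjacent to both $c_1$ and $c_2$, one can simply take $c=c_2$ and finish in three edges; the paper's argument works with an arbitrary choice of $c$.)
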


\subsection{Proof of Proposition \ref{property_edge_G}}
We start with first assertion. First
suppose that the edge $xy$ of $G$ is horizontal, i.e.,
$d_G(s,x)=d_G(s,y).$ Let $C$ be the cluster of $G$ containing this
edge. Then,  either $C$ is not  big or $C$ is big and $x,y$ belong
to the same cell of $C.$ In both cases, by construction of $G'$, we
deduce that $x$ and $y$ will be adjacent in $G'$ to the same vertex
from the father $C_0$ of $C,$ implying $d_{G'}(x,y)=2w.$ Now suppose that $xy$ is
vertical, say $x\in C, y\in C'$
and $C'$ is a son of $C.$ Denote by $C_0$ the father of $C.$ Let $z$
be a vertex of $C$ to which $y$ is adjacent in $G'.$ If $C$ is
small, medium, or $C$ is big but  $x$ and $z$ belong to the same
cell, then in $G'$ the vertices $z$ and $x$ will be adjacent to the
same vertex $x_{C_0}$ of the father $C_0$ of $C$, yielding
$d_{G'}(x,y)\le 3w.$  So, suppose that $C$ is big and the vertices
$z$ and $x$ belong to different cells $C_1$ and $C_2$ of $C$, say
$z\in C_1$ and $x\in C_2$. By Lemma \ref{big_cluster3}, the cluster
$C'$ is not small. According to the algorithm, $z$ is the center of
the cell $C_1,$ i.e., $z=c_1.$ Note also that $x$ and the center
$c_2$ of its cell are both adjacent in $G'$ to a vertex $x_{C_0}\in
C_0,$ whence $d_{G'}(x,c_2)=2w.$ If $C'$ is big and say $y\in C'_1,$
then since $y$ is adjacent to $z$ in $G',$ from the algorithm we
conclude that a vertex of $C'_1$ is adjacent in $G$ to a vertex of
$C_1.$ On the other hand, $y\in C'_1$ is adjacent in $G$ to $x\in
C_2.$ As a consequence, the cell $C'_1$ is adjacent in $G$ to both
cells $C_1$ and $C_2$ of $C,$ which is impossible by Lemma
\ref{big_cluster3}. So, the cluster $C'$ must be medium.  If $C$ has
a big son $C''$, then since both cells of $C$ are adjacent in $G$ to
the medium son $C',$ we obtain a contradiction with Proposition
\ref{two_fibers}(i). Hence, $C$ cannot have big sons. Moreover, by
Proposition \ref{two_fibers}, $C'$
is the unique spread son of $C$. According to the algorithm (see
{\sf Case 3}), the centers $z=c_1$ and $c_2$ of the cells of $C$ are
adjacent in $G'$ to a common vertex $u$ from $C'$, yielding $d_{G'}(z,c_2)=2w.$ As a
result, we obtain a path
with at most 5 edges connecting the vertices $y$ and $x$ in $G:$
$(y,z=c_1,u,c_2,x_{C_0},x).$  This concludes the proof of the first assertion of
Proposition \ref{property_edge_G}.

We continue with second assertion.
Any edge $xy$ of $G'$ runs between two
clusters lying in consecutive layers of $G$ (and $G'$); let
$x\in C$ and $y\in C',$ where $C$ is the father of $C'.$ In $G$, the
vertex $y$ has a neighbor  $x'\in C.$ Let $x'\ne x,$
otherwise there is nothing to prove. If $C$ is not big, then
$d_G(x,x')\le 16\lambda+12,$ whence $d_G(x,y)\le 16\lambda+13,$ and
we are done. So, suppose that the cluster $C$ is big.  If $x$ and
$x'$ belong to the same cell of $C,$ then Lemma
\ref{bifocal_cluster} implies that $d_G(x,x')\le
2\Lambda=8\lambda+4,$ yielding $d_G(x,y)\le 8\lambda+5.$ Now,
suppose that $x\in C_1$ and $x'\in C_2.$ By Lemma
\ref{big_cluster3}, $C'$ is a medium or a big cluster. If $C'$ is
big and $y\in C'_1$, since $x$ and $y$ are adjacent in $G',$
according to the algorithm, $C'_1$ contains a vertex that is
adjacent in $G$ to a vertex of $C_1.$ Since $y\in C'_1$ is adjacent
in $G$ to $x'\in C_2,$ we obtain a contradiction with Lemma
\ref{big_cluster3}. Hence $C'$ is a medium cluster. According to the
algorithm, $x$ is the center of the cell $C_1$ and $C_1$ contains a
vertex $z$ adjacent in $G$ to a vertex $v\in C'.$ Since $x,z\in C_1$
implies $d_G(x,z)\le 4\lambda+2$  and $y,v\in C'$ implies
$d_G(y,v)\le 16\lambda+12$, we obtain $d_G(x,y)\le 20\lambda+15.$

\subsection{Proof of Proposition \ref{two_fibers}}\label{sec:proofs-5-6}
By Proposition
\ref{metric-relaxed-minor},  it suffices to show that $G$ contains
$K_{2,3}$ as a  $\lambda$-metric relaxed minor.  Indeed,
suppose that $C$ is a big or an
almost big cluster with cells $C_1$ and $C_2$ having two sons $C',
C'',$ such that $C_1$ and $C_2$ can be connected by a path in each
of  the $CC'$- and $CC''$-fibers of $G.$ Let $k=d_G(s,C).$ Denote by
$P'$ and $P''$ the shortest two such paths connecting two vertices
of $C,$ one in $C_1$ and another in $C_2,$ in ${\mathcal F}(C,C')$
and ${\mathcal F}(C,C''),$ respectively. Denote by $x'\in C_1$ and
$y'\in C_2$ the end-vertices of $P'$ and by $x''\in C_1$ and $y''\in
C_2$ the end-vertices of $P''.$ Clearly, the choice of $P'$ implies
 $P'\cap C=\{ x',y'\}$ and the choice of $P''$ implies $P''\cap C=\{ x'',y''\}.$ Let $w'$
and $w''$ be middle vertices of $P'$ and $P'',$ respectively (if one
of these paths has odd length, then it has two middle vertices, and
we pick one of them). Let $a'$ and $b'$ be the vertices of $P'$
located at distance $\lambda+1$ (measured in $P'$) from $w',$ where
$a'$ is located between $w'$ and $x'$ and $b'$ is located between
$w'$ and $y'.$ Denote by $L'$ the subpath of $P'$ comprised between
$a'$ and $w'$ and by $R'$ the subpath of $P'$ comprised between $w'$
and $b'.$ Analogously, for $P''$ we can define the vertices
$a'',b''$ and the paths $L''$ and $R''$ of length $\lambda+1$ each.
Finally, denote by $P'_1$ and $P'_2$ the subpaths of $P'$ comprised
between $a'$ and $x'$ and between $b'$ and $y'.$ Analogously, define
the supbaths $P''_1$ and $P''_2$ of $P''.$  Pick any shortest path
$M'$ in $G$ between the vertices $x',x''$ and any shortest path
$M''$ between $y',y''.$ Let $F'$ be a subpath of a shortest path
$P(x',s)$ from $x'$ to the root $s$ starting with $x'$ and having
length $3\lambda.$  Analogously, let $F''$ be a subpath of a
shortest path $P(y'',s)$ from $y''$ to  $s$ starting with $y''$ and
having length $3\lambda.$ Let $J'$ and $J''$ be the subpaths of
length $\lambda+1$ of $P(x',s)$ and $P(y',s),$ which continue $F'$
and $F''$, respectively, towards $s;$ see Fig. \ref{fig:two_fibers}
for an illustration.

\begin{figure}[t]
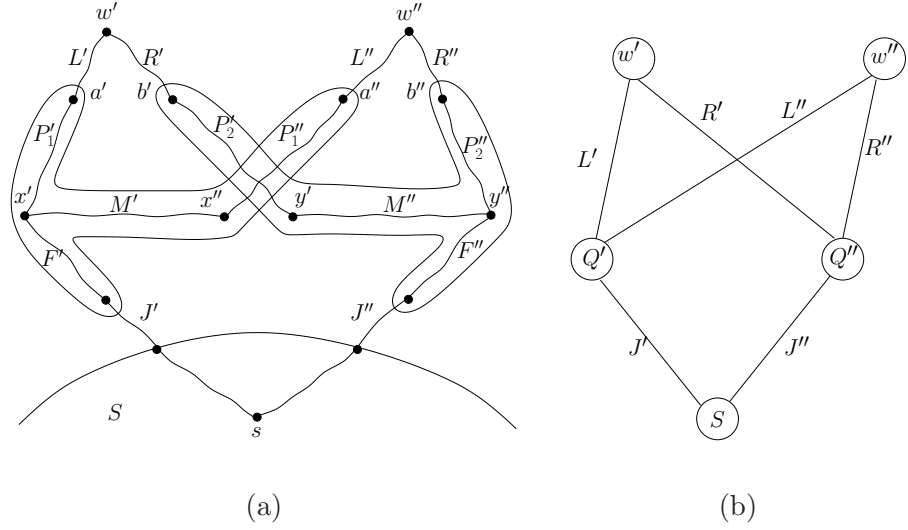

\begin{center}
\begin{tabular}{ccc}
\scalebox{0.35}
{\input{outer.pstex_t}} & &
\scalebox{0.35}
{\input{two_fibers.pstex_t}}\\
~ & & \\
(a) & ~~~~~~~~~~~~~~~~~~~~~~ & (b)
\end{tabular}
\end{center}
\caption{The {\it elements} of the map $\mu$.}
\label{fig:two_fibers}
\end{figure}

Now we are ready to define a mapping  $\mu: V(K_{2,3})\cup
E(K_{2,3})\mapsto V(G)$ certifying that $K_{2,3}$ is a
$\lambda$-metric relaxed minor of $G.$ Denote the vertices of
$K_{2,3}$ by $a,b,c,q',q'',$ where the vertices $q'$ and $q''$ are
assumed to be adjacent to each of the vertices $a,b,c.$ We set
$\mu(a):=\{ w'\}, \mu(b):=\{ w''\}, \mu(q'):=P'_1\cup P''_1\cup
M'\cup F'=:Q', \mu(q''):=P'_2\cup P''_2\cup M''\cup F'':=Q'',$ and
$\mu(c):=B_{k'}=:S,$ where $k'=k-4\lambda-1.$ Additionally, for each
edge of $K_{2,3},$ we set $\mu(aq'):=L',\mu(aq''):=R',
\mu(bq'):=L'',\mu(bq''):=R'', \mu(q's):=J', \mu(q''s):=J''.$ We will
call the paths
$L',L'',R',R'',P'_1,P'_2,P''_1,P''_2,F',F'',J',J'',M',M'',$ the
vertices $w',w'',$ and the set $S$ the {\it elements} of the map
$\mu$. Notice first that  each vertex of $K_{2,3}$ is mapped to a
connected subgraph of $G$ and each edge of $K_{2,3}$ is mapped to a
path of $G,$ thus $\mu$ satisfies the conditions (i) and (iii) of a
metric relaxed minor. It remains to show that $\mu$ satisfies the
conditions (ii$^+$), (iv$^+$), and (v$^+$). The proof of this is
subdivided into several intermediate  results.

\begin{lemma} \label{paths_P'P''} $L'\cup R'$ and $L''\cup R''$ are shortest paths of $G.$
\end{lemma}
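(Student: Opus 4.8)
The plan is to prove the assertion for $L'\cup R'$; the case of $L''\cup R''$ is completely symmetric, with $P'',w'',a'',b''$ playing the roles of $P',w',a',b'$. Observe first that $L'\cup R'$ is exactly the subpath of $P'$ running from $a'$ to $b'$ through $w'$, and that it has length $|L'|+|R'|=2(\lambda+1)$. Since $P'$ was chosen as a \emph{shortest} path joining $C_1$ to $C_2$ inside the fiber $\mathcal F(C,C')$, every subpath of $P'$ is a shortest path between its two endpoints inside $\mathcal F(C,C')$; in particular $d_{\mathcal F(C,C')}(a',b')=2\lambda+2$. Hence it suffices to show that this fiber-distance equals the $G$-distance, i.e. $d_G(a',b')=2\lambda+2$. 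Note also that, since $P'\cap C=\{x',y'\}$, every interior vertex of $P'$ (in particular $a'$, $b'$ and $w'$, which lie well in the interior because $P'$ is long --- see below) is at distance $\ge k+1$ from $s$, where $k=d_G(s,C)$.

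Assume, for contradiction, that $d_G(a',b')\le 2\lambda+1$, and fix a shortest $a'$--$b'$ path $Q$ in $G$. Replacing the subpath $L'\cup R'$ of $P'$ by $Q$ produces a walk $W$ from $x'\in C_1$ to $y'\in C_2$ of length $|P'|-(2\lambda+2)+|Q|<|P'|$, which therefore contains a genuine $x'$--$y'$ path of length $<|P'|$. If this path lay inside $\mathcal F(C,C')$ it would contradict the minimality of $P'$, so the whole argument reduces to confining $Q$ to $\mathcal F(C,C')$. For this I would combine two ingredients. First, a level count: since $C$ is big or almost big, Lemma~\ref{prop_big_cluster} gives $|P'|\ge d_G(x',y')>\Delta\ge 8\lambda+6$, so $P'$ is long and $a',b'$ lie strictly in its interior, hence at level $\ge k+1$; a vertex of $Q$ at level $\le k-\lambda$ would force $|Q|\ge 2(\lambda+1)>2\lambda+1$, a contradiction, so $Q$ stays at level $\ge k-\lambda+1$. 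Second, the structure of the layering tree: among the level-$k$ clusters the son $C'$ is adjacent only to its father $C$, so the connected component of $G\setminus B_k(s)$ containing $C'$ can be left only through $C$; together with the $\Delta$-separation of the two cells of $C$ (Lemma~\ref{prop_big_cluster} again) this lets one conclude that $Q$ cannot escape $\mathcal F(C,C')$ without allowing a shortcut of $P'$ inside the fiber.

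The delicate point, which I expect to require the most care, is ruling out that $Q$ dips to a level-$(\le k)$ vertex lying in $C$ and from there makes an excursion into a \emph{different} son of $C$ before climbing back to $b'$: such a detour leaves $\mathcal F(C,C')$ and is not killed by the level count alone. Here the plan is to argue with the original fiber directly: any such detour, spliced with the $x'$-- and $y'$--ends of $P'$, yields a $C_1$--$C_2$ path of length $<|P'|$ staying outside $B_{k-1}(s)$, and the $\Delta$-separation of $C_1$ from $C_2$ (which is exactly what makes $P'$ have length $>\Delta$) forbids such a short connection, producing the desired contradiction and hence $d_G(a',b')=2\lambda+2$.
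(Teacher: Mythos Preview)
Your overall plan matches the paper's: assume a shortcut $Q$ of length $\le 2\lambda+1$ between $a'$ and $b'$ and derive a contradiction with the minimality of $P'$. You also correctly identify the crux as confining $Q$ to the fiber $\mathcal F(C,C')$. However, your proposed resolution of the ``delicate point'' does not work.

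You claim that the spliced walk $W$ (obtained by replacing $L'\cup R'$ with $Q$ in $P'$) is a $C_1$--$C_2$ path of length $<|P'|$ ``staying outside $B_{k-1}(s)$'', and that the $\Delta$-separation of $C_1$ from $C_2$ ``forbids such a short connection''. This inference is wrong on two counts. First, $\Delta$-separation only says every $C_1$--$C_2$ path has length $>\Delta$; it says nothing about paths of length between $\Delta$ and $|P'|$. Second, the minimality of $P'$ is asserted only among paths \emph{inside $\mathcal F(C,C')$}, not among all paths avoiding $B_{k-1}(s)$; a shorter $C_1$--$C_2$ walk that detours through a different son of $C$ contradicts nothing. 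Your level count (that $Q$ stays above level $k-\lambda$) is correct but far too weak to force $Q$ into the fiber, and you do not use it in your final step anyway.

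The paper closes the gap by a case split on how $Q$ meets $C$. If $Q$ hits \emph{both} cells $C_1$ and $C_2$, then $Q$ itself is a $C_1$--$C_2$ connection of length $\le 2\lambda+1<\Delta$, directly contradicting $\Delta$-separation --- this is the only place $\Delta$-separation is used, and it is applied to $Q$, not to the long walk $W$. If $Q$ hits only one cell, say $C_1$, let $u$ be the \emph{last} vertex of $Q\cap C$ as one moves from $a'$ to $b'$. The subpath $Q_0$ of $Q$ from $u$ to $b'$ never touches $C$ again, and since $b'$ lies in the component of $G\setminus B_k(s)$ containing $C'$ and the only entrance to that component is through $C$, one concludes $Q_0\subseteq\mathcal F(C,C')$. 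Then $Q_0$ followed by the $b'$--$y'$ portion $P'_2$ of $P'$ is a $C_1$--$C_2$ path \emph{inside} $\mathcal F(C,C')$ of length $<|L'\cup R'|+|P'_2|\le|P'|$, which is the genuine contradiction with the choice of $P'$. (If $Q\cap C=\emptyset$, then $Q$ stays in the component of $G\setminus B_k(s)$ containing $a'$, hence in the fiber, and the contradiction is immediate.) This cell-by-cell analysis, with the ``last $C$-vertex'' trick to manufacture a subpath that is provably confined to the fiber, is the missing idea in your sketch.
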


\begin{proof} Suppose, by way of contradiction, that the vertices $a'$ and $b'$ can be
connected in $G$ by a path $P_0$ shorter than $L'\cup R',$ in
particular, $d_G(a',b')\leq 2\lambda+1.$  Since the length of $P'$
is greater than $\Delta,$ we conclude that the vertices $a'$ and
$b'$ do not belong to the cluster $C.$  From the choice of $P',$ the
path $P_0$ necessarily  contains  vertices of $B_{k-1}(s),$ and
therefore $P_0$ necessarily traverses the cluster $C.$ First,
suppose that $P_0$ intersects only one cell of $C,$ say $C_1.$ Let
$u$ be the last vertex of $C_1\cap P_0,$ while moving from $a'$ to
$b'$ along $P_0.$ The length of the subpath $Q_0$ of $P_0$ comprised
between $u$ and $b'$ is strictly less than the length of $P_0$ and
therefore than the length of $L'\cup R'.$   Since $b'$ belongs to
the fiber ${\mathcal F}(C,C')$ but does not belong to $C,$ we
conclude that necessarily $Q_0$ is contained in ${\mathcal
F}(C,C').$  As a result, the vertices $u$ and $y'$ can be connected
in the fiber ${\mathcal F}(C,C')$  by a path $Q_0\cup P'_2$ shorter
than $P',$ contrary to the choice of $P'.$ Now, suppose that the
path $P_0$ intersects both cells of $C.$ Pick $u\in P_0\cap C_1$ and
$v\in P_0\cap C_2.$ Since $u$ and $v$ can be connected in $G$ by the
subpath of $P_0$ comprised between them, we conclude that
$d_G(u,v)\le 2\lambda+1,$ contrary to the assumption that $C$ is a
big or almost big cluster, and thus a $\Delta$-separated
cluster.
\end{proof}

\begin{lemma} \label{distance_P'P''_to_C}
If $z\in L'\cup R',$  then $d_G(z,C)\ge 4\lambda+3-d_G(z,w')\ge
3\lambda+2.$ Analogously, if $z\in L''\cup R'',$ then $d_G(z,C)\ge
4\lambda+3-d_G(z,w'')\ge 3\lambda+2.$ In particular, $d_G(w',C)\ge
4\lambda+3$ and $d_G(w'',C)\ge 4\lambda+3.$
\end{lemma}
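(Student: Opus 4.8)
\smallskip\noindent\textbf{Proof plan.} The whole statement reduces to the single inequality $d_G(w',C)\ge 4\lambda+3$ (and, by the symmetric argument, $d_G(w'',C)\ge 4\lambda+3$). Granting this, fix $z\in L'\cup R'$: for every $c\in C$ the triangle inequality gives $d_G(z,c)\ge d_G(w',c)-d_G(z,w')\ge 4\lambda+3-d_G(z,w')$, hence $d_G(z,C)\ge 4\lambda+3-d_G(z,w')$; and since $L'\cup R'$ is a shortest path of $G$ of length $2\lambda+2$ by Lemma~\ref{paths_P'P''}, $w'$ is at path-distance at most $\lambda+1$ from every vertex of it, so $d_G(z,w')\le\lambda+1$ and the previous quantity is $\ge 3\lambda+2$. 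Taking $z=w'$ recovers the ``in particular'' clause, and the assertion about $L''\cup R''$ and $w''$ is the mirror image.

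\smallskip\noindent To prove $d_G(w',C)\ge 4\lambda+3$ I would argue by contradiction: assume $d_G(w',c)\le 4\lambda+2$ for some $c\in C$, and fix a shortest $w'$--$c$ path $S_0$. Put $k=d_G(s,C)$ and let $\mathcal C'$ be the connected component of $G(V\setminus B_k(s))$ containing $C'$. Since $P'$ is a \emph{shortest} $C_1$--$C_2$ path inside the fiber $\mathcal F(C,C')$, we have $P'\cap C=\{x',y'\}$, so every interior vertex of $P'$ lies in $\mathcal C'$ (hence has level $\ge k+1$); as $|P'|\ge d_G(x',y')>\Delta$, the midpoint $w'$ is interior, so $w'\in\mathcal C'$. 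Walking along $S_0$ from $w'$, let $c^*$ be its first vertex of level $\le k$. Because levels change by at most one along an edge, $c^*$ has level exactly $k$ and every vertex of the subpath $S_0[w',c^*]$ other than $c^*$ has level $\ge k+1$, hence lies in $\mathcal C'$; in particular the predecessor of $c^*$ on $S_0$ lies in $\mathcal C'$, and since $x'$ also has a neighbour in $\mathcal C'$, the vertices $c^*$ and $x'$ are joined by a walk all of whose vertices except $c^*,x'$ lie in $\mathcal C'$, i.e.\ a walk avoiding $B_{k-1}(s)$. By the definition of the clusters of a layering partition, $c^*$ and $x'$ then belong to the same cluster, i.e.\ $c^*\in C$. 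Thus $S_0[w',c^*]$ is a path contained in $\mathcal F(C,C')$, running from $w'$ to a vertex $c^*\in C$, of length at most $4\lambda+2$.

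\smallskip\noindent Now $c^*$ belongs to one cell of $C$, say $c^*\in C_1$ (the case $c^*\in C_2$ is symmetric, with $x',P'_1,L'$ playing the role of $y',P'_2,R'$). Concatenating $S_0[w',c^*]$ with the subpath $R'\cup P'_2$ of $P'$ running from $w'$ to $y'$ yields a walk inside $\mathcal F(C,C')$ from $c^*\in C_1$ to $y'\in C_2$, whence a $C_1$--$C_2$ path inside the fiber of length at most $(4\lambda+2)+(\lambda+1)+|P'_2|=5\lambda+3+|P'_2|$. On the other hand $|P'|=|P'_1|+|P'_2|+2(\lambda+1)$, and since $C$ is big or almost big, hence $\Delta$-separated by Lemma~\ref{prop_big_cluster}, we have $|P'|\ge d_G(x',y')>\Delta=8\lambda+6$; as $w'$ is a midpoint of $P'$, both $|P'_1|+(\lambda+1)$ and $|P'_2|+(\lambda+1)$ are at least $(|P'|-1)/2>4\lambda+2$, i.e.\ $|P'_1|,|P'_2|>3\lambda+1$. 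Consequently $5\lambda+3+|P'_2|<|P'_1|+|P'_2|+2\lambda+2=|P'|$, contradicting the minimality of $P'$ among $C_1$--$C_2$ paths inside the fiber. Replacing $P'$, $\mathcal F(C,C')$, $x',y'$ by $P''$, $\mathcal F(C,C'')$, $x'',y''$ gives $d_G(w'',C)\ge 4\lambda+3$, completing the plan.

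\smallskip\noindent The one genuinely delicate step is the claim that the truncation $S_0[w',c^*]$ stays inside the fiber and that its endpoint $c^*$ returns to the \emph{same} cluster $C$ (rather than to some other cluster at level $k$); this is exactly where the definition of the clusters of a layering partition (joinability outside $B_{k-1}(s)$) and the location of $w'$ inside the son-component $\mathcal C'$ are both indispensable. Once $c^*\in C$ is secured, the contradiction is a one-line computation with the midpoint and $\Delta$-separation estimates.
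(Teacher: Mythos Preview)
Your argument is correct and follows essentially the same strategy as the paper: assume a short path from a vertex of $L'\cup R'$ to $C$, observe that (after truncation) it lies in the fiber $\mathcal F(C,C')$, splice it with the appropriate half of $P'$, and contradict the minimality of $P'$.

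The organization differs slightly. The paper argues directly for an arbitrary $z\in L'$ and treats the two cases $u\in C_1$, $u\in C_2$ separately (they are not symmetric for a general $z$). You instead reduce everything to the single midpoint inequality $d_G(w',C)\ge 4\lambda+3$ via the triangle inequality, which makes the two cell cases symmetric and saves one case analysis. Your truncation argument showing $c^*\in C$ is also more careful than the paper's bare assertion that ``any shortest $(z,u)$-path lies entirely in the fiber''; your version makes explicit why the first level-$k$ vertex on $S_0$ must land in $C$ rather than in some other cluster at level $k$, which is exactly the point where the definition of layering clusters is needed.
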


\begin{proof} Note that the length of the subpath of $P'$ between $x'$ and
$w'$ is at least $4\lambda+3$ as $C$ is ($8\lambda+6$)-separated.
Consequently, the length of the subpath of $P'$ between $x'$ and
$z\in L'$ is at least $4\lambda+3-d_G(z,w')$. Assume, by way of
contradiction, that $d_G(z,C)<4\lambda+3-d_G(z,w').$ First suppose
that $d_G(z,C)=d_G(z,u)$ for $u\in C_1.$ Any shortest $(z,u)$-path
$P(u,z)$ lies entirely in the fiber ${\mathcal F}(C,C')$ (and
therefore outside the ball $B_{k-1}(s)$).  Since
$d_G(z,u)<4\lambda+3-d_G(z,w')$,
$d_G(z,u)$ is less than the length of the subpath of $P'$ between
$x'$ and $z$.   Hence, we conclude that the $(u,y')$-path consisting
of the path $P(u,z)$ followed by the subpath of $P'$ between $z$ and
$y'$ is contained in ${\mathcal F}(C,C')$ and is shorter than $P',$
contrary to the choice of $P'.$ Now suppose that $d_G(z,C)=d_G(z,u)$
for a vertex $u\in C_2.$ Since $z\in L'$ and $u\in C$ both belong to
${\mathcal F}(C,C'),$ any shortest $(z,u)$-path $P(u,z)$ also
belongs to this fiber. Note that $P(u,z)$ has length $<4\lambda+3$
while the subpath of $P'$ between $z$ and $y'$ has length $\ge
4\lambda+3.$ Therefore, the path between $u$ and $x'$ consisting of
$P(u,z)$ followed by the subpath of $P'$ between $z$ and $x'$ is
shorter than $P'$ and is contained in the fiber  ${\mathcal
F}(C,C'),$  contrary to the choice of $P'.$
\end{proof}

\begin{lemma} \label{setS} The set $S$ is $\lambda$-far from all elements of $\mu$ except $J'$,
$J''$ and itself.
\end{lemma}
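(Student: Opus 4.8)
The plan is to exploit the fact that every element of the map $\mu$ other than $J'$, $J''$, and $S$ itself sits at distance at least $k-3\lambda$ from the root $s$, where $k=d_G(s,C)$ denotes the common layer of the vertices of $C$, whereas $S=B_{k'}(s)$ with $k'=k-4\lambda-1$ consists exactly of the vertices at distance at most $k-4\lambda-1$ from $s$. Granting this, for any vertex $z$ in such an element and any $u\in S$ the triangle inequality applied through $s$ gives $d_G(z,u)\ge d_G(s,z)-d_G(s,u)\ge(k-3\lambda)-(k-4\lambda-1)=\lambda+1>\lambda$, so $S$ is $\lambda$-far from that element, which is exactly the assertion. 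Thus the whole proof reduces to this ``layer estimate''.

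For the estimate I would handle the elements in three groups. First, the subpaths $L',R',P'_1,P'_2$ and the vertex $w'$ are all contained in the path $P'$; since $P'$ was chosen to avoid $B_{k-1}(s)$ with $P'\cap C=\{x',y'\}$, every vertex of $P'$ is at distance $\ge k\ge k-3\lambda$ from $s$, and symmetrically for $L'',R'',P''_1,P''_2$ and $w''$ inside $P''$. Second, $F'$ is an initial segment of length $3\lambda$ of a shortest $(x',s)$-path with $x'\in C_1$ at distance $k$ from $s$, so its vertices occupy exactly the layers $k,k-1,\dots,k-3\lambda$, all of distance $\ge k-3\lambda$; the path $F''$ is treated identically, being an initial segment of a shortest $(y'',s)$-path with $y''\in C_2$.

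The delicate case is $M'$ and $M''$. Here $M'$ is a shortest path joining $x'$ and $x''$, both in the same cell $C_1$ of the big (or almost big) cluster $C$; by Lemma~\ref{prop_big_cluster}(ii), $\mathrm{diam}(C_1)\le\Lambda=4\lambda+2$, hence $d_G(x',x'')\le 4\lambda+2$. A crude bound $d_G(s,z)\ge k-d_G(x',x'')$ only places $M'$ above layer $k-4\lambda-2$, which is not enough (it could even dip into $S$). Instead one uses that $M'$ is geodesic: a vertex $z$ at position $i$ along $M'$ satisfies $d_G(x',z)=i$ and $d_G(x'',z)=d_G(x',x'')-i$, so $d_G(s,z)\ge k-\min\{i,\,d_G(x',x'')-i\}\ge k-\lfloor d_G(x',x'')/2\rfloor\ge k-(2\lambda+1)\ge k-3\lambda$, using $\lambda\ge 1$. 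The same applies to $M''$ with $y',y''\in C_2$. This geodesic/midpoint estimate is the one nonroutine point of the argument; every other step is a direct reading off of layers. One may also observe, as a consistency check that explains the exclusions, that $J'$ and $J''$ do meet $S$: each extends $F'$ (resp.\ $F''$) by a further $\lambda+1$ steps toward $s$ and therefore reaches layer $k'=k-4\lambda-1$, so it cannot be $\lambda$-far from $S$.
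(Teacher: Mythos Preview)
Your proof is correct and follows essentially the same approach as the paper: both arguments compare layers to see that every element other than $J',J'',S$ lies at distance at least $k-3\lambda$ (or better) from $s$, and for $M',M''$ both use the midpoint observation that any vertex on a geodesic between $x',x''\in C_1$ is within $2\lambda+1$ of one endpoint. The paper phrases the fiber case slightly differently (any path from $S$ to the fibers must cross $C$, and $S$ is $\lambda$-far from $C$), but this is equivalent to your direct layer bound $\ge k$ for vertices of $P',P''$.
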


\begin{proof} From the definition of the layering it follows that $S$ is $\lambda$-far
from the paths $F',F'',$ and the cluster $C.$ Since any path from a
vertex of $S$  to the $CC'$- and $CC''$-fibers traverses the cluster
$C,$ we conclude that $S$ is $\lambda$-far from the vertices
$w',w''$ and the paths $L',R',L'',R'',P'_1,P'_2,P''_1,P''_2.$ It
remains to show that $S$ is $\lambda$-far from the paths $M'$ and
$M''.$ Suppose, by way of contradiction, that $d_G(u,v)\le \lambda$
for $u\in M'$ and $v\in S.$ Let $d_G(u,x')\le d_G(u,x'').$ Since the
length of $M'$ is at most $4\lambda+2$ (by Lemma
\ref{prop_big_cluster}(ii)), we conclude that $d_G(u,x')\le
2\lambda+1,$ whence $d_G(x',v)\le 3\lambda+1,$ contrary to the
assumption that $d_G(s,x')=k$ and $d_G(s,v)\le k'=k-4\lambda-1.$
\end{proof}

\begin{lemma} \label{setw'w''} The vertex $w'$ is $\lambda$-far from all elements of $\mu$ except $L'$, $R'$  and itself.
Analogously, $w''$ is $\lambda$-far from all elements of $\mu$
except $L''$, $R''$  and itself.
\end{lemma}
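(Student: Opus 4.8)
The plan is to argue by contradiction, exactly paralleling the proof of Lemma \ref{setS}. Suppose some element $X$ of $\mu$ other than $L'$, $R'$, and $\{w'\}$ itself contains a vertex $z$ with $d_G(w',z)\le\lambda$. First I would dispatch the elements lying ``on the other side'', i.e. in the $CC''$-fiber ($w''$, $L''$, $R''$, $P''_1$, $P''_2$) and those rooted near $s$ ($J'$, $J''$, $S$, $F'$, $F''$). For all of these, any path from $w'$ to the element must cross the cluster $C$, so $d_G(w',X)\ge d_G(w',C)\ge 4\lambda+3>\lambda$ by Lemma \ref{distance_P'P''_to_C}; for $S$ one additionally invokes Lemma \ref{setS} (or repeats its layering argument). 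This leaves only the elements in the \emph{same} fiber ${\mathcal F}(C,C')$ that are not $L'\cup R'$: namely $P'_1$, $P'_2$, $M'$, and $M''$.

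For $z\in P'_1$ (the subpath of $P'$ between $a'$ and $x'$), note that $a'$ is the endpoint of $L'$ at distance exactly $\lambda+1$ from $w'$ along $P'$. If $d_G(w',z)\le\lambda$, then since $z$ lies beyond $a'$ on $P'$, the prefix $L'$ together with a shortest $(w',z)$-path would give a $w'$--$z$ walk of length $\le\lambda$, while the portion of $P'$ from $w'$ to $z$ has length $\ge\lambda+1$; concatenating the $\le\lambda$ detour with the suffix of $P'$ from $z$ to $x'$ yields a strictly shorter $x'$--$y'$ path inside ${\mathcal F}(C,C')$ (the detour stays in the fiber because $w'$, $z$, and everything between are at distance $\ge 3\lambda+2$ from $C$ by Lemma \ref{distance_P'P''_to_C}, hence outside $B_{k-1}(s)$), contradicting the minimality of $P'$. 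The case $z\in P'_2$ is symmetric. For $z\in M'$ (the shortest path between $x'$ and $x''$, of length $\le 4\lambda+2$ by Lemma \ref{prop_big_cluster}(ii)), $d_G(w',z)\le\lambda$ forces $d_G(w',x')\le\lambda+(4\lambda+2)=5\lambda+2$; but the subpath of $P'$ from $w'$ to $x'$ has length $\ge 4\lambda+3$ and $P'$ is a shortest path in ${\mathcal F}(C,C')$, so $d_G(w',x')\ge 4\lambda+3$ — this alone is not yet a contradiction, so I would instead route through $C$: a shortest $w'$--$x'$ path must pass through $C$ (since $x'\in C$ and $w'\notin C$), so $d_G(w',x')\ge d_G(w',C)\ge 4\lambda+3$; combining with $d_G(w',x')\le 5\lambda+2$ is consistent, hence the sharper route is: the $(w',x')$-distance realized via $z\in M'$ and then along $M'$ to $x'$ would let us replace the suffix $P'_1$ of $P'$ (of length $\lambda+1$ plus the $L'$ part, total $\ge 4\lambda+3$) by a path of length $\le\lambda+d_G(z,x')\le\lambda+(2\lambda+1)=3\lambda+1$ once we take $z$ to be the $M'$-vertex closest to $x'$, again contradicting minimality of $P'$ within the fiber — and the detour stays in the fiber since $w'$ is $\ge 3\lambda+2$ from $C$. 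The case $z\in M''$ is handled the same way using the endpoint $x''$ and the subpath $P''_1$.

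The main obstacle is bookkeeping: one must be careful that each ``shortcut'' genuinely stays inside the fiber ${\mathcal F}(C,C')$ (so that it competes with $P'$, which is by definition the shortest such path), and this is precisely where Lemma \ref{distance_P'P''_to_C} is used — it guarantees every point of $L'\cup R'$, in particular $w'$ and any nearby vertex, is far enough from $C$ to lie outside $B_{k-1}(s)$, so a radius-$\lambda$ ball around it cannot reach $C$ and the detour cannot ``escape'' the fiber. Once that is in place, each case is a one-line triangle-inequality estimate contradicting the minimality of $P'$; the $w''$ statement follows by the obvious left-right symmetry of the construction.
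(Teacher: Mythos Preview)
Your overall structure matches the paper's: separate the ``other side'' elements via $d_G(w',C)\ge 4\lambda+3$, then handle $P'_1,P'_2$ by a shortcut argument, then $M',M''$. The $P'_1,P'_2$ case is fine and is exactly what the paper does.

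The $M'$ case, however, has a gap. You correctly compute that if $z\in M'$ is the endpoint closer to $x'$ then $d_G(w',x')\le \lambda + (2\lambda+1)=3\lambda+1$, but then you try to use this to build a shorter $x'$--$y'$ path inside the fiber ${\mathcal F}(C,C')$, replacing $L'\cup P'_1$ by the walk $w'\to z\to x'$ along $M'$. The problem is that $M'$ is an arbitrary shortest $(x',x'')$-path in $G$ with both endpoints in $C$; nothing forces the segment of $M'$ from $z$ to $x'$ to stay in ${\mathcal F}(C,C')$. Your justification (``the detour stays in the fiber since $w'$ is $\ge 3\lambda+2$ from $C$'') only covers the ball $B_\lambda(w')$, not the additional $\le 2\lambda+1$ steps along $M'$, which may well dip into $B_{k-1}(s)$.

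The fix is much simpler, and you already have the ingredients in hand: you noted $d_G(w',C)\ge 4\lambda+3$ and you derived $d_G(w',x')\le 3\lambda+1$. Since $x'\in C$, these two inequalities are already in contradiction---no shortcut argument is needed. This is precisely the paper's argument. The same works for $M''$ (note: $M''$ joins $y'$ and $y''$, not $x''$; your reference to ``the endpoint $x''$ and the subpath $P''_1$'' is a slip), giving $d_G(w',y')\le 3\lambda+1<4\lambda+3$.
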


\begin{proof} From Lemma \ref{distance_P'P''_to_C} we conclude that
$d_G(w',C)\ge 4\lambda+3.$ Since any path between $w'$ and a vertex
of the set $\{ w''\}\cup L''\cup P''_1\cup R''\cup P''_2\cup F'\cup
F''\cup J'\cup J''\cup S$ traverses the cluster $C,$ we conclude
that $w'$ is $\lambda$-far from each of these elements of $\mu$.
Next we show that $w'$ is $\lambda$-far from the paths $M'$ and
$M''.$ Suppose, by way of contradiction, that $d_G(w',u)\le \lambda$
for a vertex $u\in M'$ and assume, without loss of generality, that
$u$ is closer to $x'$ than to $x'',$ yielding $d_G(x',u)\le
2\lambda+1.$ But then $d_G(w',x')\le 3\lambda+1<4\lambda+3,$
contrary to the assumption that $d_G(w',C)\ge 4\lambda+3.$ Finally,
we will show that $w'$ is $\lambda$-far from the paths $P'_1$ and
$P'_2.$ Suppose, by way of contradiction,  that $d_G(w',u)\le
\lambda$ for a vertex $u\in P'_1.$ Let $P(u,w')$ be a shortest
$(u,w')$-path. Obviously, $P(u,w')$ belongs to the $CC'$-fiber
${\mathcal F}(C,C').$  Hence, replacing in $P'$ the subpath
comprised between $u$ and $w'$ (and comprising $L'$) by $P(u,w')$,
we obtain a shorter path connecting $x'$ and $y'$ in ${\mathcal
F}(C,C').$  This contradiction shows that $w'$ is $\lambda$-far from
$P'_1$ and $P'_2.$
\end{proof}

\begin{lemma} \label{setL'R'} Each of the paths $L'$ and $R'$ is
$\lambda$-far from each of the elements $L'',R'',P''_1,P''_2,J',J''$
of $\mu.$ Analogously, $L''$ and $R''$ are $\lambda$-far from
$P'_1,P'_2,J',J''.$ In particular, the $\mu$-images of any two
non-incident edges of $K_{2,3}$ are $\lambda$-far.
\end{lemma}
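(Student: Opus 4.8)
Each of the paths $L'$ and $R'$ is $\lambda$-far from each of the elements $L'',R'',P''_1,P''_2,J',J''$ of $\mu.$ Analogously, $L''$ and $R''$ are $\lambda$-far from $P'_1,P'_2,J',J''.$ In particular, the $\mu$-images of any two non-incident edges of $K_{2,3}$ are $\lambda$-far.

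=== MY PROOF PROPOSAL ===

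The plan is to reduce the lemma to one quantitative ingredient and one combinatorial ingredient. The quantitative ingredient is Lemma~\ref{distance_P'P''_to_C}, which already gives $d_G(z,C)\ge 3\lambda+2$ for every $z\in L'\cup R'$ and every $z\in L''\cup R''$ (since $d_G(z,w')\le\lambda+1$, resp.\ $d_G(z,w'')\le\lambda+1$). The combinatorial ingredient is that the fibers $\mathcal{F}(C,C')$ and $\mathcal{F}(C,C'')$ meet only in $C$ and that one cannot pass from the interior of one to the interior of the other without going through $C$.

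First I would place the relevant elements of $\mu$ in the layering of $G$. Since $d_G(z,C)\ge 3\lambda+2>0$ for $z\in L'\cup R'$, no vertex of $L'\cup R'$ lies in $C$; as $L'\cup R'\subseteq P'\subseteq\mathcal{F}(C,C')$ and $\mathcal{F}(C,C')\setminus C$ is the connected component of $G\setminus B_k(s)$ containing $C'$ (which lies in spheres $L^{\ge k+1}$), we get $L'\cup R'\subseteq\mathcal{F}(C,C')\setminus C\subseteq L^{\ge k+1}$. Symmetrically $L''\cup R''\subseteq\mathcal{F}(C,C'')\setminus C\subseteq L^{\ge k+1}$; moreover, because $P''\cap C=\{x'',y''\}$, the sets $P''_1\setminus\{x''\}$ and $P''_2\setminus\{y''\}$ are contained in $\mathcal{F}(C,C'')\setminus C$ while $x'',y''\in C$ (and the analogues with primes interchanged). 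Finally $J'$ continues $F'$, which runs $3\lambda$ edges along a shortest $(x',s)$-path starting from $x'\in L^k$, so $J'\subseteq L^{\le k-3\lambda}$, and likewise $J''\subseteq L^{\le k-3\lambda}$.

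The main step is the fiber-separation. Since $C'$ and $C''$ are distinct sons of $C$ in $\Gamma$, their vertices cannot be joined by a path avoiding $B_{k-1}(s)$, so the components of $G\setminus B_k(s)$ containing $C'$ and $C''$ are distinct, whence $\mathcal{F}(C,C')\cap\mathcal{F}(C,C'')=C$. Moreover, a path of $G$ that starts in $\mathcal{F}(C,C')\setminus C$ and leaves $\mathcal{F}(C,C')$ must first hit a vertex of $L^k$ adjacent to the $C'$-component, and every such vertex of $L^k$ belongs to the cluster $C$ (any two of them are joined outside $B_{k-1}(s)$, and $C$ is the father of $C'$). Hence, for $u\in L'\cup R'$ and any $v\in L''\cup R''\cup P''_1\cup P''_2$, either $v\in C$, or $v\notin\mathcal{F}(C,C')$ and every $(u,v)$-path meets $C$; in both cases $d_G(u,v)\ge d_G(u,C)\ge 3\lambda+2>\lambda$. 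This proves that $L'$ and $R'$ are $\lambda$-far from $L'',R'',P''_1,P''_2$, and the symmetric argument gives that $L''$ and $R''$ are $\lambda$-far from $P'_1,P'_2$.

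For the pairs involving $J'$ or $J''$ I would use spheres instead of fibers: every vertex of $L'\cup R'\cup L''\cup R''$ lies in $L^{\ge k+1}$ and every vertex of $J'\cup J''$ lies in $L^{\le k-3\lambda}$, so by the triangle inequality at $s$ any such pair $u,v$ has $d_G(u,v)\ge d_G(s,u)-d_G(s,v)\ge(k+1)-(k-3\lambda)=3\lambda+1>\lambda$; this covers the remaining $\lambda$-farness assertions. For the last clause, the six pairs of non-incident edges of $K_{2,3}$ are $\{aq',bq''\}$, $\{aq',cq''\}$, $\{aq'',bq'\}$, $\{aq'',cq'\}$, $\{bq',cq''\}$, $\{bq'',cq'\}$, whose $\mu$-images are $\{L',R''\}$, $\{L',J''\}$, $\{R',L''\}$, $\{R',J'\}$, $\{L'',J''\}$, $\{R'',J'\}$, and each of these has just been shown $\lambda$-far. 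The only genuinely delicate point is the fiber-separation claim, since it is the one place that appeals to the combinatorics of the layering partition rather than to the triangle inequality and Lemma~\ref{distance_P'P''_to_C}.
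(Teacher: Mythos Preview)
Your proof is correct and follows essentially the same approach as the paper: both use Lemma~\ref{distance_P'P''_to_C} to get $d_G(u,C)\ge 3\lambda+2$ for $u\in L'\cup R'$, and then observe that any path from such $u$ to a vertex in $L''\cup R''\cup P''_1\cup P''_2\cup J'\cup J''$ must traverse $C$. The only difference is that the paper treats $J',J''$ uniformly with the other elements via the same fiber-separation argument, whereas you handle $J',J''$ by a direct sphere-level estimate (yielding $3\lambda+1$ instead of $3\lambda+2$); this is a cosmetic variation, not a different route.
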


\begin{proof}
The cluster $C$ separates the $CC'$-fiber containing  $L'\cup R'$
from the rest of the graph. Therefore, any path connecting a vertex
$u\in L'\cup R'$ to a vertex $v\in L''\cup R''\cup P''_1\cup
P''_2\cup J'\cup J''$ traverses $C.$ Since $d_G(u,C)\ge
4\lambda+3-d_G(u,w')\ge 4\lambda+3-\lambda-1=3\lambda+2>\lambda,$ we
conclude that $d_G(u,v)>\lambda.$
\end{proof}

\begin{lemma} \label{setQ'_Q''} The set $Q'$ is $\lambda$-far from the
paths $R',R''$ and $J''.$ Analogously, the set $Q''$ is $\lambda$-far
from the paths $L',L'',$ and $J'.$
\end{lemma}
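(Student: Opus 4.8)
The plan is to estimate $d_G(u,v)$ for $u$ ranging over each of the four \emph{elements} $P'_1,P''_1,M',F'$ making up $Q'$ and $v$ ranging over each of $R',R'',J''$ (and symmetrically for $Q''$ against $L',L'',J'$). First I would record where each of these sets sits, relative to the father cluster $C$ (at level $k$) and the two fibers. Write $K'$ (resp. $K''$) for the part of ${\mathcal F}(C,C')$ (resp. ${\mathcal F}(C,C'')$) lying at levels $\ge k+1$, i.e. ${\mathcal F}(C,C')\setminus C$. Since $P'\cap C=\{x',y'\}$, we have $P'_1\setminus\{x'\}\subseteq K'$ and, by Lemma~\ref{distance_P'P''_to_C}, $d_G(R',C)\ge 4\lambda+3-(\lambda+1)=3\lambda+2$, so also $R'\subseteq K'$; likewise $P''_1\setminus\{x''\}\subseteq K''$, $R''\subseteq K''$ and $d_G(R'',C)\ge 3\lambda+2$. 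Every vertex of $M'$ lies within $2\lambda+1$ of $C_1$, because $|M'|=d_G(x',x'')\le{\rm diam}(C_1)\le\Lambda=4\lambda+2$ (Lemma~\ref{prop_big_cluster}, using that $C$ is big or almost big); every vertex of $F'$ lies within $3\lambda$ of $x'\in C_1$ and at level $\ge k-3\lambda$; and $J''$ lies entirely at levels $\le k-3\lambda$, so it is disjoint from both fibers, and each $v\in J''$ satisfies $d_G(v,y'')\le|F''|+|J''|=3\lambda+(\lambda+1)=4\lambda+1$, hence $d_G(v,C_2)\le 4\lambda+1$.

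The two tools I would use throughout are the following. (a) \emph{Fiber separation}: $K'$ is an entire connected component of $G\setminus B_k(s)$, and its only $G$-neighbours in $B_k(s)$ lie in $C$ (an edge from $K'$ to $B_k(s)$ joins level $k+1$ to level $k$; its level-$(k+1)$ endpoint lies in $C'$, the unique level-$(k+1)$ cluster inside $K'$, and its level-$k$ endpoint then lies in the parent of $C'$, which is $C$). Consequently any path from a vertex of $K'$ to a vertex outside ${\mathcal F}(C,C')$ passes through $C$, so for $u\in K'$ and $v\notin{\mathcal F}(C,C')$ one has $d_G(u,v)\ge d_G(u,C)+d_G(v,C)$; the same holds with $K''$, ${\mathcal F}(C,C'')$. (b) \emph{Minimality of $P'$}: if a shortest $G$-path between $u\in P'_1\setminus\{x'\}$ and $v\in R'$ stayed inside $K'$, then splicing it into $P'$ in place of the subpath between $u$ and $v$ would produce a $C_1$--$C_2$ walk inside ${\mathcal F}(C,C')$ meeting $C$ only at $x',y'$ and strictly shorter than $P'$, contradicting the choice of $P'$; hence such a shortest path has length at least the along-$P'$ distance from $P'_1$ to $R'$, which is $|L'|=\lambda+1$. (This is the same sharpening already exploited in Lemma~\ref{paths_P'P''}.)

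With (a) and (b) the twelve pairs are routine. For $(P'_1,R')$ a shortest path either stays in $K'$, giving $d_G\ge\lambda+1$ by (b), or exits ${\mathcal F}(C,C')$ through $C$, giving $d_G\ge d_G(R',C)\ge 3\lambda+2$ by (a); the pair $(P''_1,R'')$ is identical with $P'',L''$ in place of $P',L'$. For $(P'_1,R'')$ and $(P''_1,R')$ the endpoints lie in the different components $K',K''$, so by (a) $d_G\ge d_G(R'',C)\ge 3\lambda+2$ (resp. $d_G(R',C)$). For $(F',R')$ and $(F',R'')$: $F'\setminus\{x'\}$ lies below level $k$, hence outside both fibers, so by (a) $d_G\ge d_G(R',C)\ge 3\lambda+2$ (resp. $d_G(R'',C)$), and $u=x'\in C$ gives the same bound. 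For $(P'_1,J'')$, $(P''_1,J'')$, $(F',J'')$: $J''$ is disjoint from both fibers and sits at levels $\le k-3\lambda$ while $C$ is at level $k$, so by (a) and the level difference $d_G\ge d_G(J'',C)\ge 3\lambda>\lambda$. It remains to deal with the ``local'' element $M'$ and, more delicately, with $(F',J'')$: for $u\in M'$ and $v\in R'\cup R''$, $d_G(u,v)\ge d_G(v,C)-d_G(u,C)\ge(3\lambda+2)-(2\lambda+1)=\lambda+1$; for $u\in M'\cup F'$ and $v\in J''$, since $d_G(u,C_1)\le 3\lambda$ and $d_G(v,C_2)\le 4\lambda+1$ while $C$ is $(8\lambda+6)$-separated, i.e. $d_G(C_1,C_2)\ge 8\lambda+7$ (Lemma~\ref{prop_big_cluster}), the triangle inequality through $C_1$ and $C_2$ yields $d_G(u,v)\ge(8\lambda+7)-3\lambda-(4\lambda+1)=\lambda+6$. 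In every case $d_G(u,v)>\lambda$, establishing the $Q'$ part; the $Q''$ part is entirely analogous, interchanging the two cells of $C$ and, correspondingly, $P'_1/P'_2$, $L'/R'$, $M'/M''$, $F'/F''$, $J'/J''$ (the bounds used are symmetric in the two sides).

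I expect the main obstacle to be exactly the $M'$-- and $(F',J'')$-pairs. For all other pairs one element lives in a fiber and the other is separated from it by $C$, so tool (a) immediately gives a bound of $3\lambda+2$ or more; but $M'$ and $F'$ hug $C$, and $J''$ reaches back up to level $k-3\lambda$, so the ``pass through $C$'' estimate degenerates to an $O(1)$ bound. One is then forced to invoke the \emph{quantitative} $\Delta$-separation of big and almost big clusters together with the explicit length $|F''|+|J''|=4\lambda+1$ of the descent from $y''$, and to check that the resulting constants land strictly above $\lambda$ (here $\lambda+6$ and $\lambda+1$, not $\lambda$ or $\lambda-1$); this is precisely why the ambient hypothesis that $C$ is big or almost big — hence $(8\lambda+6)$-separated, by Lemma~\ref{prop_big_cluster} — is indispensable.
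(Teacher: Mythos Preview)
Your argument is correct and uses the same three ingredients as the paper's proof: the fiber--separation bound $d_G(R',C),d_G(R'',C)\ge 3\lambda+2$ from Lemma~\ref{distance_P'P''_to_C}, the minimality of $P'$ and $P''$ for the ``same--fiber'' pairs, and the $\Delta$--separation of $C$ for the pairs involving $M'\cup F'$ and $J''$. The only slip is expository: you list $(F',J'')$ among the cases dispatched by tool~(a), but tool~(a) does not apply there (neither $F'$ nor $J''$ lies in a fiber, and both can reach level $k-3\lambda$, so the level difference alone gives nothing); your later $\Delta$--separation computation for $u\in M'\cup F'$, $v\in J''$ is the correct treatment and should simply replace the earlier mention.

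Two small remarks on the comparison. First, your case split for $(P'_1,R')$ and $(P''_1,R'')$ --- shortest path stays in $K'$ (splice, contradict minimality) versus leaves $K'$ (hence hits $C$, giving $d_G(u,v)\ge d_G(R',C)$) --- is slightly cleaner than the paper's version, which splices first and then has to argue that the resulting walk, even if it wanders through $B_k(s)$, still contains a short $C_1$--$C_2$ subpath inside the fiber. Second, the paper only writes out the case $v\in R''$ explicitly and leaves $v\in R'$ to symmetry; your systematic $4\times 3$ enumeration makes that symmetry explicit, at the cost of some repetition.
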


\begin{proof} That $J''$ is $\lambda$-far from $P'_1$ and $P''_1$ follows
from the definition of $J''$ and the fact that the cluster $C$
separates  $J''$ from $P'_1\subseteq {\mathcal F}(C,C')$ and
$P''_1\subseteq {\mathcal F}(C,C'')$. Now suppose that
$d_G(u,v)\le\lambda$ for $u\in M'\cup F'$ and $v\in J''\setminus S.$
If $u\in F',$ then $d_G(x',u)\le 3\lambda, d_G(v,y'')\le 4\lambda$
and, by triangle inequality, we conclude that $d_G(x',y'')\le
8\lambda<\Delta,$ contrary to the fact that $x'\in C_1,y''\in C_2$ and the
cluster $C$ is $\Delta$-separated. If $u\in M',$ then since
$M'$ has length at most $4\lambda+2,$ we conclude that one of the
vertices $x',x'',$ say $x'$ has distance at most $2\lambda+1$ to
$u.$ Then,  by triangle inequality, again we conclude that
$d_G(x',y'')\le 7\lambda+1,$ contrary with
$\Delta$-separability of $C.$ This shows that $Q'$ and $J''$
are $\lambda$-far.

It remains to show that  $Q'$ and $R'\cup R''$ are $\lambda$-far.
Pick $u\in Q'$ and $v\in R''.$ By  Lemma \ref{distance_P'P''_to_C},
any vertex $v\in R''$ is located at distance $\ge 3\lambda+2$ from
the cluster $C.$ Since $C$ separates $R''$ from $P'_1$ and $F',$ we
conclude that $d_G(u,v)\ge 3\lambda+2$ for any vertex $u\in P'_1\cup
F'.$ If $u\in M'$ and $d_G(x',u)\le d_G(x'',u),$ then $d_G(x',u)\le
2\lambda+1,$ yielding $d_G(x',v)\le d_G(x',u)+d_G(u,v)\le
2\lambda+1+d_G(u,v).$ Hence, if $d_G(u,v)\le \lambda$, we get
$d_G(x',v)\le 3\lambda+1,$ contrary to the fact that $d_G(v,C)\ge
3\lambda+2.$ Finally, suppose that $u\in P''_1$ and $d_G(u,v)\le
\lambda.$ Let $P_0$ be any shortest path between $u$ and $v.$
Replacing the subpath $P''(u,v)$ of $P''$ comprised between $u$ and
$v$ by $P_0$, we will obtain a path $P$ shorter than $P''$ (because
$L''\subset P''(u,v)$ and, by Lemma \ref{paths_P'P''}, $L''$ is a
shortest path of length $\lambda+1$ of $G$). The path $P$ is
completely contained in the union of the fiber ${\mathcal F}(C,C'')$
and the ball $B_k(s).$ Moreover, each time $P$ moves from ${\mathcal
F}(C,C'')$ to $B_k(s),$ it traverses the cluster $C.$ Therefore,
taking any subpath of $P$ between two vertices from different cells
of $C$ and completely contained in ${\mathcal F}(C,C''),$ we will
obtain a contradiction with the minimality choice of the path $P''.$
This contradiction concludes the proof that $Q'$ is $\lambda$-far
from $R',R''$ and $J''.$
\end{proof}

\begin{lemma} \label{setsQ'Q''} The sets $Q'$ and $Q''$ are $\lambda$-far.
\end{lemma}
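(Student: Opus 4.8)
I want to show that the sets $Q' = P'_1\cup P''_1\cup M'\cup F'$ and $Q'' = P'_2\cup P''_2\cup M''\cup F''$ are $\lambda$-far in $G$. Since each of $Q',Q''$ is a union of four pieces, I will check all sixteen pairs, but most of them reduce to the geometric separation already recorded in the previous lemmas. The two fibers ${\mathcal F}(C,C')$ and ${\mathcal F}(C,C'')$ meet only inside the cluster $C$ (together with its ancestors), and the relevant subpaths $P'_1, P''_1$ (resp.\ $P'_2, P''_2$) stay strictly inside their fibers and at distance $\ge 3\lambda+2$ from $C$ by Lemma~\ref{distance_P'P''_to_C} once we are on $L'\cup R'$-side; but note $P'_1$ itself may come close to $C$ at its endpoint $x'$, so I cannot use that lemma blindly on $P'_1$. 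Instead, the cleanest organizing principle is: a shortest path realizing a short distance between a piece of $Q'$ and a piece of $Q''$ must traverse the cluster $C$ (because the pieces of $Q'$, other than $F'$ and the $C$-endpoints of $M'$, live in $B_k(s)\cup{\mathcal F}(C,C')$, while the pieces of $Q''$ live in $B_k(s)\cup{\mathcal F}(C,C'')$), and then one gets a contradiction either with $\Delta$-separation of $C$ or with the minimality of $P'$ or $P''$, exactly in the style of Lemma~\ref{setQ'_Q''}.

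\textbf{The key steps, in order.} First I would handle the pairs where at least one piece is $F'$ or $F''$: these paths start at $x'\in C_1$ (resp.\ $y''\in C_2$) and go toward $s$, so together with the endpoint they give, via the triangle inequality, an upper bound on $d_G(x',y'')$ of the form $3\lambda + \lambda + (\text{length of the other piece reaching }C_2)$; since $C$ is $\Delta$-separated with $\Delta = 8\lambda+6$ and the other pieces have length $\le 4\lambda+2$, choosing the bound so that the total is $< \Delta$ yields the contradiction. This is literally the computation in the first paragraph of the proof of Lemma~\ref{setQ'_Q''} for the pair $(F', J'')$, recycled. Second, for the pair $(M', M'')$: both have length $\le 4\lambda+2$ by Lemma~\ref{prop_big_cluster}(ii), $M'$ joins $x',x''\in C_1$ and $M''$ joins $y',y''\in C_2$, so a short connection between them forces $d_G(x',y') \le 2\lambda+1 + \lambda + 2\lambda+1 < \Delta$ (picking the nearer endpoints), again contradicting $\Delta$-separation. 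Third, the pairs $(P'_1\cup P''_1,\ P'_2\cup P''_2)$ and the cross-fiber pairs $(P'_1, P''_2)$ etc.: a short path here either stays in one fiber — impossible, since the endpoints $x',y'$ (resp.\ $x'',y''$) it would reconnect lie in different cells of $C$, so the path traverses $C$, producing inside the fiber a shorter $x'y'$- or $x''y''$-reroute, contradicting the minimality of $P'$ or $P''$ exactly as in Lemma~\ref{paths_P'P''} — or it crosses from one fiber to the other through $C$, so one of the two cells is reached from inside a fiber by a short subpath, again giving a shorter reroute. Fourth, the mixed pairs like $(P'_1, M'')$ or $(M', P''_2)$: combine the above — a short $M'$-to-$P''_2$ path, together with $M'$'s length bound, forces one endpoint of $M'$ (a vertex of $C_1$) within $2\lambda+1+\lambda$ of the portion of $P''_2$ near $y'' \in C_2$, hence within $3\lambda+1 + (\text{length of the }P''_2\text{-subpath to }y'')$ of $y''$; checking the arithmetic against $\Delta$ (or, if the $P''_2$-portion is deep in its fiber, against Lemma~\ref{distance_P'P''_to_C}) closes it.

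\textbf{The main obstacle.} The delicate cases are the ones involving $P'_1$ and $P''_1$ (the ``lower'' parts of the paths $P', P''$, between the $a',b'$-cutoffs and the endpoints $x',x''$ in cell $C_1$), paired against the corresponding ``2''-pieces on the $C_2$ side. Here I cannot invoke Lemma~\ref{distance_P'P''_to_C} — that lemma only controls $L'\cup R'$, not $P'_1$ — and $P'_1$ genuinely can run close to $C$ near its endpoint $x'$. So the only handle is the minimality of the chosen paths $P', P''$ within their fibers: I must argue that any short connection between $P'_1 \cup P''_1$ and $P'_2 \cup P''_2$ yields, after possibly passing through $C$, a rerouted path inside one fiber that is strictly shorter than $P'$ or $P''$ — being careful, as in Lemma~\ref{setQ'_Q''}'s last paragraph, that deleting $L'$ (or $L''$), which is a shortest path of length $\lambda+1$, from such a reroute strictly shortens it whenever the short connecting path has length $\le \lambda$. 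Making this rerouting argument airtight — keeping track of which fiber the reroute lands in, and ensuring the subpath extracted between two differently-celled vertices of $C$ actually stays in the fiber — is the technical heart of the proof; everything else is triangle-inequality bookkeeping against $\Delta = 8\lambda+6$.
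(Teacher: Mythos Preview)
Your overall strategy matches the paper's, and your identification of the ``main obstacle'' (the rerouting argument using minimality of $P'$ and $P''$) is exactly right. However, your case organization has a gap.

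The direct triangle-inequality bound against $\Delta$-separation of $C$ works only when \emph{both} pieces are short, that is, for the four pairs drawn from $\{M',F'\}\times\{M'',F''\}$. Whenever one of the pieces is $P'_2$ or $P''_2$ (or symmetrically $P'_1,P''_1$), the bound you write in Step~1 and Step~4, namely $3\lambda+\lambda+(\text{length of the other piece reaching }C_2)$, is useless: the subpath $P''_2(v,y'')$ can be arbitrarily long, so you cannot conclude $d_G(x',y'')<\Delta$. Your fallback reference to Lemma~\ref{distance_P'P''_to_C} is misplaced, since that lemma controls only $L''\cup R''$, not $P''_2$. Thus the pairs $(F',P'_2)$, $(F',P''_2)$, $(M',P'_2)$, $(M',P''_2)$ and their mirror images are not handled by your Steps~1 and~4 as written.

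The paper closes all of these uniformly in its second paragraph: for any $u\in Q'$ and $v\in P''_2$ with $d_G(u,v)\le\lambda$, the shortest $(u,v)$-path $P_0$ must contain a vertex $v_0\in C$ with the $v_0$-to-$v$ subpath inside $\mathcal F(C,C'')$; since $|P_0|\le\lambda<\Delta$, $P_0$ meets only one cell of $C$. If that cell is $C_1$, then $v_0\to v\to y''$ (the latter along $P''_2$) is a $C_1$-to-$C_2$ path inside $\mathcal F(C,C'')$ strictly shorter than $P''$, contradicting minimality. Hence $P_0$ meets only $C_2$; letting $u_0$ be its first vertex in $C_2$, one gets $d_G(u,u_0)\le\lambda$, and then either $u\in F'\cup M'$ gives $d_G(x',u_0)<\Delta$ (contradiction), or $u\in P'_1\cup P''_1$ gives a shorter reroute inside the corresponding fiber (contradiction). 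This is precisely the rerouting idea you describe for Step~3, but it must be invoked for every pair involving a $P$-piece, not just for $(P'_1\cup P''_1)\times(P'_2\cup P''_2)$. With that correction your plan becomes the paper's proof.
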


\begin{proof} First notice that $M'$ and $M''$ are $\lambda$-far.
Indeed,  pick $u\in M', v\in M'',$ and suppose, without loss of
generality,   that $d_G(x',u)\le d_G(x'',u)$ and $d_G(y'',v)\le
d_G(y',v).$ If $d_G(u,v)\le \lambda,$ then, by triangle inequality,
$d_G(x',y'')\le d_G(x',u)+d_G(u,v)+d_G(v,y'')\le
2\lambda+1+\lambda+2\lambda+1=5\lambda+2<\Delta,$ contrary to
assumption that $C$ is $\Delta$-separated. In a similar way
one can show that $F'$ and $M''$ as well as $M'$ and $F''$ are
$\lambda$-far: if $d_G(u,v)\le\lambda$ for $u\in F'$ and $v\in M''$
with $d_G(y'',v)\le 2\lambda+1,$ then $d_G(x',y'')\le
3\lambda+\lambda+2\lambda+1=6\lambda+1<\Delta.$ Analogously, if
$F'$ and $F''$ are not $\lambda$-separated, then $d_G(x',y'')\le
3\lambda+\lambda+3\lambda=7\lambda<\Delta,$ a contradiction.

Suppose now that $u\in Q',v\in P''_2,$ and $d_G(u,v)\le \lambda.$
Let  $P_0$ be a shortest path of $G$ between $u$ and $v.$ Since $C$
is $\Delta$-separated, $P_0$ cannot intersect both cells $C_1$
and $C_2$ of $C.$ On the other hand, since $v\in P''_2\subset
{\mathcal F}(C,C''),$ the path $P_0$ necessarily contains a vertex
$v_0\in C$ such that the whole subpath of $P_0$ between $v_0$ and
$v$ is contained in ${\mathcal F}(C,C'').$ If $v_0\in C_1,$ then the
path constituted by the subpath of $P_0$ between $v_0$ and $v,$
followed by the subpath of $P''_2$ between $v_0$ and $y'',$ is
completely contained in the fiber ${\mathcal F}(C,C'')$ and is
shorter than $P''$ (because $L''\cup R''$ has length $2\lambda+2$),
contrary to the minimality choice of $P''.$  Therefore, necessarily
$v_0\in C_2,$ showing also that $P_0\cap C=P_0\cap C_2.$ Let also
$u_0$ be the first intersection of $P_0$ with $C_2$ while moving
from $u$ to $v.$

If $u\in F'$ then $d(x',u)\le 3\lambda$ and we conclude
$d_G(x',u_0)\le d_G(x',u)+d_G(u,u_0)\le
3\lambda+\lambda=4\lambda<\Delta,$ contrary to the fact that $C$
is $\Delta$-separated. Analogously, if $u\in M'$ and
$d_G(x',u)\le d_G(x'',u),$ then $d_G(x',u_0)\le
d_G(x',u)+d_G(u,u_0)\le 2\lambda+1+\lambda<\Delta.$  If $u\in
P'_1,$ then the subpath of $P'$ between $x'$ and $u$ followed by the
subpath of $P_0$ between $u$ and $u_0$ forms a path  contained in
the fiber ${\mathcal F}(C,C')$ and that is shorter that $P'$
(because $L'\cup R'$ has length $2\lambda+2$), contrary to the
choice of $P'.$ Finally, suppose that $u\in P''_1.$ Then, the
subpath of $P''$ between $x''$ and $u$ followed by the subpath of
$P_0$ between $u$ and $u_0$ constitute a path contained in the fiber
${\mathcal F}(C,C'')$ and is shorter that $P'',$ contrary to the
choice of $P''.$ This contradiction shows that the sets $Q'$ and
$Q''$ are $\lambda$-far.
\end{proof}

This establishes the first assertion of Proposition  \ref{two_fibers}.
To prove the second assertion of Proposition \ref{two_fibers}, first
suppose suppose
that the cluster $C$ is big and $C$ has a big and a medium sons
$C',C''$ such that both cells $C_1$ and $C_2$ are adjacent to $C''$
or that $C$ has two medium sons $C',C''$ adjacent to both cells of
$C.$  By definition of the layering, each vertex of $C'\cup C''$ is
adjacent to a vertex of $C.$ If all vertices of $C'$ are adjacent to
vertices from the same cell of $C,$ say $C_1,$ then for any
$x',y'\in C'$ we have $d_G(x',y')\le 2+4\lambda+2,$ contrary to the
assumption that $C'$ is big. Hence, both cells of $C$ are adjacent
to  $C',$ say $x\in C_1$ is adjacent to $x'\in C'$ and $y\in C_2$ is
adjacent to $y'\in C'.$ By Lemma \ref{big_cluster3}, $x'$ and $y'$
belong to different cells of $C',$ say $x'\in C'_1$ and $y'\in
C'_2.$ Let $k:=d_G(s,C).$ Since $x',y'\in C',$ the vertices $x'$ and
$y'$ are adjacent in $G(V\setminus B_k(s))$ by a path $P(x',y').$
Then $P(x,y):=xx'\cup P(x',y')\cup y'y$ is a path between $x$ and
$y$ in the $CC'$-fiber ${\mathcal F}(C,C').$ Analogously, since
both cells $C_1$ and $C_2$ are adjacent to $C'',$ we conclude that
two vertices from different cells of  $C$ can be connected by a path
belonging to the $CC''$-fiber, showing that the conditions of Proposition
\ref{two_fibers} are fulfilled. This concludes the proof establishes in case (i).

Now suppose that $C$ has two big sons $C'$ and $C''.$ Then $C$ is
either a big or an almost big cluster. By Lemma \ref{big_cluster1},
each of the clusters $C',C''$ is $(8\lambda+8)$-separated while the
cluster $C$ is $(8\lambda+6)$-separated and that its cells $C_1$ and $C_2$
have diameters at most $\Lambda.$ As in previous cases, one can
deduce that $C_1$ is adjacent to one cell of each of the clusters
$C'$ and $C'',$ while $C_2$ is adjacent to the second cell of these
clusters, establishing the case (ii) and concludung the proof of
Proposition  \ref{two_fibers}.

\subsection{Proof of Theorem \ref{outerplanar}} The algorithm returns the
answer ``not'' when a cluster $C$  has two big sons or a big cluster
$C$ has two spread sons. In this case, by Proposition
\ref{two_fibers} any embedding of $G$ into a $K_{2,3}$-minor free
graph requires distortion $>\lambda,$ whence $\lambda^*(G,{\mathcal
O})>\lambda.$ Now suppose that the algorithm returns the outerplanar
graph $G'$ weighted uniformly with $w=20\lambda+15.$ Notice that in Case 4 of the algorithm,
the required matching between the four cells of the big clusters $C$ and $C'$ exists by Lemma \ref{big_cluster1}
and because $C'$ is the unique spread son of $C.$  By Proposition
\ref{property_edge_G} we have $d_G(x,y)\le
20\lambda+15=d_{G'}(x,y)$ for each edge $xy$ of the graph $G'.$  By
Lemma \ref{distortion_edge2} we conclude that $d_G(x,y)\le
d_{G'}(x,y)$ for any pair $x,y\in V.$ By Proposition
\ref{property_edge_G}, for any edge $xy$ of $G,$ the vertices $x$ and
$y$ can be connected in $G'$ by a path with at most 5 edges, i.e.,
$d_{G'}(x,y)\le 5w=100\lambda+75.$ By Lemma  \ref{distortion_edge1}
we conclude that $d_{G'}(x,y)\le (100\lambda+75)d_G(x,y)$ for any
pair $x,y$ of $V.$ Hence $d_G\le d_{G'}\le (100\lambda+75)d_G,$
concluding the proof of Theorem \ref{outerplanar}.

\section{Final remarks} All our non-algorithmic results hold for infinite graphs as well.
Namely, given a connected graph $G$ with an arbitrary number of
vertices, we can define a layering partition ${\mathcal LP}$ of $G.$
Then, as in Sections 2 and 3, the largest diameter of a cluster of
${\mathcal LP}$ can be used to upper bound the optimal (additive or
multiplicative) distortion of embedding $G$ into a tree metric. The
trees $H,H',H_{\ell},$ and $H'_{\ell}$ can be defined as in the
algorithm {\sc Approximation by Tree Metric} and these trees have
the same  approximation qualities as the analogous trees defined in
the finite case (see Corollaries 4-7). Therefore, an infinite graph
$G$ admits an embedding into a tree-metric with a finite distortion
if and only if the diameters of clusters of an arbitrary layering
partition ${\mathcal LP}$ of $G$ are uniformly bounded. Similar
conclusions hold for approximation by outerplanar graphs:
Propositions 4, 5, and 6 establish in what cases the optimal
distortion is $>\lambda;$ otherwise, the construction provided by the
algorithm {\sc Approximation by Outerplanar Graph} returns an
infinite outerplanar graph $G'$  into which $G$ embeds with
distortion $\le 100\lambda+75.$ The proof of Proposition 2 (done by
induction on the number of vertices and edges of $G$) seems to be an
obstacle to this conclusion. However, if a graph $G$ has a {\it
finite} graph $H$ as a relaxed minor, then  one can easily find a
finite subgraph $G_0$ of $G$ which still has $H$ as a relaxed minor,
and therefore we can apply Proposition 2 to $G_0$ instead of $G$ to
conclude that $G_0$ (and therefore $G$) has $H$ as a minor.

We conclude with two open questions. Our Proposition \ref{K_2,r} presents a strong
necessary condition for embedding a graph $G$ into a $K_{2,r}$-minor free metric
with a distortion $\leq \lambda$.  However, we were not able to provide all
structural conditions and to design a constant factor approximation
algorithm for this problem. An even more challenging problem is
designing a constant factor approximation algorithm for optimal
distortion of embedding a graph metric into a $K_4$-minor free
metric (series-parallel metric).


\bigskip\noindent
{\bf Acknowledgment.}  This research
was partly supported by the ANR grant
BLANC ``GGAA:  Aspects G\'eom\'etriques, analytiques et algorithmiques des groupes".


\end{document}